\numberwithin{equation}{section}
\def\demo{\noindent{\it Proof. }}
\newtheorem{theorem}{Theorem}[section]
\newtheorem{lemma}[theorem]{Lemma}
\newtheorem{proposition}[theorem]{Proposition}
\newtheorem{corollary}[theorem]{Corollary}
\DeclareMathOperator{\depth}{depth}
\DeclareMathOperator{\reg}{reg}
\theoremstyle{definition}
\newtheorem{definition}[theorem]{Definition} 
\newtheorem{remark}[theorem]{Remark}
\newtheorem{example}[theorem]{Example}
\begin{document}


\title[Depth and regularity of monomial ideals]{Depth and regularity
of monomial ideals 
via polarization and combinatorial optimization} 
\thanks{The first and third authors were partially supported by 
SNI. The fourth author was
supported by a scholarship from CONACYT}

\author[J. Mart\'\i nez-Bernal]{Jos\'e Mart\'\i nez-Bernal}
\address{
Departamento de
Matem\'aticas\\
Centro de Investigaci\'on y de Estudios Avanzados del IPN\\
Apartado Postal
14--740 \\
07000 Mexico City, D.F.
}
\email{jmb@math.cinvestav.mx}

\author[S. Morey]{Susan Morey}
\address{Department of Mathematics\\
Texas State University\\
San Marcos, TX 78666
}
\email{morey@txstate.edu}

\author[R. H. Villarreal]{Rafael H. Villarreal}
\address{
Departamento de
Matem\'aticas\\
Centro de Investigaci\'on y de Estudios
Avanzados del
IPN\\
Apartado Postal
14--740 \\
07000 Mexico City, D.F.
}
\email{vila@math.cinvestav.mx}

\author[C. Vivares]{Carlos E. Vivares}
\address{
Departamento de
Matem\'aticas\\
Centro de Investigaci\'on y de Estudios
Avanzados del
IPN\\
Apartado Postal
14--740 \\
07000 Mexico City, D.F.
}
\email{cevivares@math.cinvestav.mx}

\keywords{Depth, regularity, max-flow min-cut, clutter, edge ideal,
monomial ideal, polarization.}
\subjclass[2010]{Primary 13F20; Secondary 05C22, 05E40, 13H10.} 

\begin{abstract}  
In this paper we use polarization to study the behavior of the depth
and regularity of a monomial ideal $I$, locally at a variable $x_i$, 
when we lower the degree of all the highest powers of the variable
$x_i$ occurring in the minimal generating set of $I$, and examine the
depth and regularity of powers of edge ideals of clutters using combinatorial 
optimization techniques.  If $I$ is the edge ideal of an 
unmixed clutter with the max-flow min-cut
property, we show that the powers of $I$ have non-increasing depth and
non-decreasing regularity. In particular edge ideals of unmixed bipartite
graphs have non-decreasing regularity. We are able to show that the symbolic powers of the
ideal of covers of the clique clutter of a strongly perfect graph 
have non-increasing depth. A similar result holds for the ideal of
covers of a uniform ideal clutter. 
\end{abstract}

\maketitle 

\section{Introduction}\label{introduction-section}
Let $R=K[x_1,\ldots,x_n]$ be a polynomial ring over a field $K$, let
$f$ be a monomial of $R$, and let $I\subset R$ be a monomial ideal.
The following two inequalities were shown in
\cite[Theorem~3.1]{Caviglia-et-al}:
\begin{itemize}
\item[(A)] $\depth(R/(I\colon f))\geq\depth(R/I)$,
\item[(B)] $\reg(R/I)\geq\reg(R/(I\colon f))$,  
\end{itemize}
where $\depth(R/I)$ and $\reg(R/I)$ are the depth and regularity of
the quotient ring $R/I$ and $(I\colon f)=\{g\in R \vert gf \in I \}$ is referred to as
a colon ideal. 
If $I$ and $f$ are
squarefree, we show that (A) and (B) are equivalent using a duality
theorem of Terai \cite{terai} (Theorem~\ref{terai-duality-theorem})
and some duality formulas for edge ideals of clutters
(Lemma~\ref{duality-formula}), 
that is, (A) and (B) are dual statements in the squarefree case
(Proposition~\ref{square-free-case}). 

We introduce a formula expressing  $\depth(R/(I,f))-\depth(R/I)$,
$\reg(R/I)$ and
$\reg(R/(f,I))$ in terms of the depth and regularity of polarizations
(Proposition~\ref{pepe-morey-vila-vivares}). Then, as an application, 
we give an alternate proof of $(A)$ and $(B)$, and show some other
known inequalities about depth and regularity 
(Corollary~\ref{Caviglia-et-al-lemma-alternate-proof}). If
${\rm in}_\prec(I+f)=I+{\rm in}_\prec(f)$ for some monomial
order $\prec$ and some homogeneous polynomial $f$, we show that (A) and (B)
hold (Corollary~\ref{feb20-18}).  

The aim of this paper is to use these results to study the behavior of the depth and
regularity of $R/I$, locally at a variable $x_i$, when we lower the degree of all the
highest powers of the variable $x_i$ occurring in the minimal
generating set of $I$ and, furthermore, to examine the
depth and regularity of powers and symbolic powers of edge ideals of
clutters and graphs, and their ideals of covers, using combinatorial 
optimization techniques. 

Fix a variable $x_i$ that occurs in the minimal generating
set $G(I)$ of $I$. Let $q$ be the maximum of the degrees in $x_i$ of the
monomials of $G(I)$, let $\mathcal{B}_i$ be the set of all monomials 
of $G(I)$ of degree $q$ in $x_i$, let 
$p$ be the maximum of the degrees in $x_i$ of the
monomials of $\mathcal{A}_i=G(I)\setminus\mathcal{B}_i$, and consider
the ideal $L=(\{x^a/x_i\vert\,x^a\in\mathcal{B}_i\}\cup\mathcal{A}_i\})$. 

One of our main results shows that the depth is locally non-decreasing at
each variable $x_i$ when lowering the top degree. Note that if $p=0$,
that is, if all generators of $I$ that are divisible by $x_i$ have
degree $q$ in $x_i$, then $L=(I\colon x_i)$. Thus when $p=0$ we have
from $(A)$ that $\depth(R/L) = \depth(R/(I\colon x_i)) \geq \depth(R/I)$.
This theorem allow control over the depth when the degrees in $x_i$
of the generators varies.

\noindent {\bf Theorem~\ref{morey-vila-oaxaca-2017}}{\it\  
{\rm (a)} If $p\geq 1$ and $q-p\geq 2$, then
$\depth(R/I)=\depth(R/L)$.  

{\rm(b)} If $p\geq 0$ and $q-p=1$, then $\depth(R/L)\geq\depth(R/I)$. 

{\rm (c)} If $p=0$ and $q\geq 2$, then 
$\depth(R/I)=
\depth(R/(\{x^a/x_i^{q-1}\vert\,x^a\in\mathcal{B}_i\}\cup\mathcal{A}_i\}))$.}

There are similar results for regularity
(Theorem~\ref{morey-vila-oaxaca-2017-reg}). As a consequence one
recovers a result of Herzog, Takayama and Terai
\cite{herzog-takayama-terai} 
showing that 
$\depth(R/{\rm rad}(I))\geq\depth(R/I)$ and a result of Ravi
\cite{ravi} showing that $\reg(R/{\rm 
rad}(I))\leq\reg(R/I)$ (Corollaries~\ref{herzog-takayama-terai-theo}
and \ref{feb21-18}). The result can also be used to show that the
Cohen--Macaulay property of a vertex-weighted digraph is dependent
only on knowing which vertices have weight greater than one and not
on the actual weights used (Corollary~\ref{oriented-graphs}).    

There are some classes of monomial ideals whose powers have non-increasing depth and
non-decreasing regularity  
\cite{Caviglia-et-al,constantinescu-etal,Nguyen-Thu-Hang,
Hang-Trung,very-well-covered-non-inc,very-well-covered-non-inc-proc}. A natural way to show these 
properties for a monomial ideal $I$ is to prove the existence of a
monomial $f$ such that $(I^{k+1}\colon f)=I^k$ for $k\geq 1$. This was
exploited in \cite{Caviglia-et-al,edge-ideals} and in \cite[Corollary~3.11]{Ha-Morey} in
 connection to normally torsion-free ideals. 

Since any squarefree monomial
ideal is the edge ideal $I(\mathcal{C})$ of a clutter $\mathcal{C}$, 
we will study the depth and regularity of powers and symbolic powers of edge ideals of
clutters and graphs--and their ideals of covers---that have nice combinatorial optimization
properties (e.g., max-flow min-cut, ideal, uniform, and unmixed clutters, strongly perfect and very
well-covered graphs). The $k$-th symbolic power of an ideal $I$ is denoted by
$I^{(k)}$ (Definition~\ref{symbolic-power-def}). The {\it ideal of
covers\/} of a clutter $\mathcal{C}$, denoted 
$I(\mathcal{C})^\vee$, is the edge ideal of $\mathcal{C}^\vee$, the
clutter of minimal vertex covers of $\mathcal{C}$.

If $I(\mathcal{C})$ is the edge ideal of a clutter $\mathcal{C}$ which has a good
leaf, then the powers of $I(\mathcal{C})$ have non-increasing depth and
non-decreasing regularity \cite[Theorem~5.1]{Caviglia-et-al}. 
In particular edge ideals of forests or simplicial trees 
have these properties. Our next
result gives a wide family of ideals with these properties. 

\noindent {\bf Theorem~\ref{winter2017}}{\it\ If $I=I(\mathcal{C})$ is 
the edge ideal of an unmixed clutter $\mathcal{C}$ with the max-flow min-cut property, then 
\begin{itemize}
\item[(a)] $\depth(R/I^k)\geq\depth(R/I^{k+1})$ for $k\geq 1$, and 
\item[(b)] $\reg(R/I^k)\leq\reg(R/I^{k+1})$ for $k\geq 1$.
\end{itemize}}

Let $G$ be a graph with vertex set $V(G)=\{x_1,\ldots,x_n\}$ and edge
set $E(G)$. A result of T. N. Trung \cite{Tran-Nam-Trung} shows that
for $k\gg 0$ one has 
$$
\depth(R/I(G)^k)=|{\rm isol}(G)|+c_0(G),
$$
where ${\rm isol}(G)$ is the set of isolated vertices of $G$ and
$c_0(G)$ is the number of non-trivial bipartite components of $G$. We
complement this fact by observing that 
$\dim(R)-\ell(I(G))$ is equal to $|{\rm isol}(G)|+c_0(G)$, where $\ell(I(G))$ is the
analytic spread of $I(G)$, and by showing the 
inequality 
$$\depth(R/(I(G)^k\colon x_i^k))\leq\depth(R/(I(G\setminus
N_G(x_i))^k,N_G(x_i)))$$
for $k\geq 1$ and $i=1,\ldots,n$
(Proposition~\ref{trung-limit-powers}), where $N_G(x_i)$ is the
neighbor set of $x_i$. For $k=1$ this inequality
follows from the fact that $(I(G)\colon x_i)$ is equal to $(I(G\setminus
N_G(x_i)),N_G(x_i))$ \cite[p. 293]{monalg-rev} and using the
inequality $\depth(R/(I(G)\colon x_i))\geq \depth(R/I(G))$. The
general case follows by successively applying 
Theorem~\ref{morey-vila-oaxaca-2017} locally at each variable. 

It is an open problem whether or not the powers of the edge ideal of
a graph have  
non-increasing depth. To the best of our knowledge this is open even
for bipartite graphs. Our next application extends the fact that
the powers of $I(G)^\vee$, the ideal of covers of $G$, have non-increasing depth if $G$ is bipartite
\cite{constantinescu-etal,Nguyen-Thu-Hang,Hang-Trung}.

\noindent {\bf Corollary~\ref{vila-hang}}{\it\ Let $G$ be a bipartite graph. The following hold. 
\begin{itemize}
\item[\rm(a)] \cite[Corollary~5.3]{kimura-terai-yassemi} If $G$ is unmixed, then $I(G)$ 
has non-increasing depth.
\item[\rm(b)] $($\cite[Theorem~3.2]{constantinescu-etal}, \cite{Nguyen-Thu-Hang},
\cite[Corollary~2.4]{Hang-Trung}$)$ $I(G)^\vee$ has non-increasing depth.
\item[\rm(c)] $I(G)^\vee$ has non-decreasing regularity. 
\end{itemize}}
\quad An interesting example due to Kaiser,
Stehl\'\i k, and  \v{S}krekovski \cite{persistence-ce} shows that
the powers of the ideal of covers of a graph does not always have non-increasing
depth (Example~\ref{kaiser}), that is, part (b) of 
Corollary~\ref{vila-hang} fails for non-bipartite graphs. A nice
result of L. T. Hoa, K. Kimura, N. Terai and T. N. Trung
\cite[Theorem~3.2]{Hoa-etal} shows that
the symbolic powers of the ideal of covers of a graph have
non-increasing depth. A similar result holds for the ideal of covers 
of a uniform ideal clutter (Corollary~\ref{winter2017-dual}).

If $G$ is a very well-covered graph, 
then the depths of symbolic powers of $I(G)^\vee$ form a 
non-increasing sequence \cite{very-well-covered-non-inc} (cf.
\cite[Theorem~3.2]{Hoa-etal}) and also the
depths of symbolic powers of
$I(G)$ form a non-increasing sequence
\cite[Theorem~5.2]{kimura-terai-yassemi}.   
In this case we show that the symbolic powers of
$I(G)$ have non-decreasing regularity
(Proposition~\ref{very-well-covered}). 

We will give another family of squarefree monomial ideals whose symbolic powers have
non-increasing depth and non-decreasing regularity.  A {\it clique\/} of a graph $G$
is a set of vertices inducing a complete subgraph. The {\it clique
clutter\/} of $G$, denoted by ${\rm cl}(G)$, is the clutter on $V(G)$ whose edges are the 
maximal cliques of $G$. 

\noindent {\bf Proposition~\ref{strongly-perfect-symbolic}}{\it\ Let $G$ be a strongly perfect graph
and let ${\rm cl}(G)$ be its clique clutter. If $J$ is the ideal of
covers of ${\rm cl}(G)$, then
\begin{itemize}
\item[(a)] $\depth(R/J^{(k)})\geq\depth(R/J^{(k+1)})\ \mbox{ for }\
k\geq 1$, and 
\item[(b)] $\reg(R/J^{(k)})\leq\depth(R/J^{(k+1)})\ \mbox{ for }\
k\geq 1$.
\end{itemize}
}
\quad Bipartite graphs, chordal graphs, comparability graphs, and Meyniel
graphs are strongly perfect (see \cite{ravindra-strongly-perfect} and
the references therein). Thus this result generalizes 
Corollary~\ref{vila-hang}(b) because if $G$ is a bipartite graph, then 
${\rm cl}(G)=G$ and $I(G^\vee)^{(k)}=I(G^\vee)^k$ for $k\geq 1$
\cite{reesclu}. 

For edge ideals of clutters the Cohen--Macaulay property of its 
$k$-th ordinary or symbolic power is well understood if $k\geq 3$. By a
result of N. Terai and N. V. Trung \cite{terai-trung}, if
$I(\mathcal{C})$ is the edge ideal of a clutter $\mathcal{C}$, then
$I(\mathcal{C})^k$ 
(resp. $I(\mathcal{C})^{(k)}$) is Cohen--Macaulay for some $k\geq 3$ if and
only if $I(\mathcal{C})$ is a complete intersection (resp. the 
independence complex $\Delta_\mathcal{C}$ of $\mathcal{C}$ is a
matroid).  

The case when $G$ is a graph and $k=2$ is treated in
\cite{crupi-et-al,Hoang-etal,hoang-gorenstein-second-jaco,trung-tuan}. The Cohen--Macaulay
property of the square of an  
edge ideal can be expressed in terms of its connected components
\cite{Ha-sum-powers-of-sums,rinaldo-terai-yoshida}
(Corollary~\ref{cm-square-components}).  
Edge ideals of graphs whose square is Cohen--Macaulay have a rich 
combinatorial structure and have been 
classified combinatorially by D. T. Hoang, N. C. Minh and T. N. Trung 
\cite{Hoang-etal,hoang-gorenstein-second-jaco}. The Cohen-Macaulay property of $I(G)^2$ 
is also studied in \cite{trung-tuan} in terms
of simplicial complexes.

As an application we recover the following fact. 

\noindent {\bf Corollary~\ref{square-cm}}
{\rm\ (\cite[Theorem~2.7]{crupi-et-al}, 
\cite[Proposition~4.2]{Hoang-etal})}
{\it\ Let $G$ be a bipartite graph
without isolated vertices. Then $I(G)^2$ is Cohen-Macaulay if and
only if $G$ is a disjoint union of edges.
}

For all unexplained
terminology and additional information  we refer to 
\cite{Eisen,Mats} (for commutative algebra), \cite{cornu-book,Schr,Schr2} (for
combinatorial optimization),   
\cite{Har} (for graph theory), 
and
\cite{francisco-ha-mermin,graphs-rings,Herzog-Hibi-book,chapter-vantuyl,monalg-rev} 
(for the theory of powers of edge ideals of clutters and monomial ideals). 

\section{Depth and regularity of monomial ideals via polarization}\label{depth-monomial}

Let $R=K[x_1,\ldots,x_n]$ be a polynomial ring over
a field $K$ and let $I$ be a monomial ideal. The unique minimal set
of generators of $I$  
consisting of monomials is denoted by $G(I)$. The goal of this section is to use polarization to control the depth and regularity of $R/I$ when the powers of a variable appearing in $G(I)$ are reduced. 
To do so, we first recall some known results, then show a series of equivalent conditions that will allow us to study the behavior of the depth and the regularity of $R/I$. 

In \cite[Lemma~5.1]{Dao-Huneke-Schweig} it was shown that
$\depth(R/(I\colon x_i)) \geq \depth(R/I)$ for all $i$. By noting that a
generating set for $(I \colon x_i)$ can be found from $G(I)$ by
reducing all powers of $x_i$ by one, this can be viewed as the first
step in reaching the goal.  The result was recently generalized in   
\cite[Theorem~3.1]{Caviglia-et-al} to any monomial ideal. We provide
an alternate proof using polarization. We begin by treating the
squarefree case using Stanley-Reisner complexes.   

Recall that if $\Delta$ is a simplicial complex with vertices
$x_1,\ldots,x_n$, the
{\it Stanley-Reisner ideal} of $\Delta$, denoted by $I_{\Delta}$, is
the ideal of $R$ whose squarefree monomial generators correspond to non-faces 
of $\Delta$. That is, $$I_{\Delta} = (x_{i_1}\cdots x_{i_t} \vert
\{x_{i_1}, \ldots , x_{i_t}\} \not\in \Delta).$$ 
\quad The following result
shows how the structure of the simplicial complex can be used to find
the depth of the associated ideal.   

\begin{theorem}{\rm \cite{dsmith}}\label{aug21-01}
Let $\Delta$ be a simplicial complex with vertex set
$V=\{x_1,\ldots,x_n\}$, let $I_\Delta$ be its
Stanley--Reisner ideal, and $K[\Delta]=R/I_\Delta$. Then 
$$
{\rm depth}(R/I_\Delta)=1+\max\{i\, \vert\, K[\Delta^i]
\mbox{ is Cohen--Macaulay}\},
$$
where $\Delta^i=\{F\in\Delta\, \vert\, 
\dim(F)\leq i\}$ is the $i$-skeleton and $-1\leq
i\leq\dim(\Delta)$.
\end{theorem}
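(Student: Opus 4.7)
The plan is to establish, for each integer $i$ with $-1 \leq i \leq \dim\Delta$, the equivalence
\[
\depth(K[\Delta]) \geq i+1 \iff K[\Delta^i] \text{ is Cohen--Macaulay},
\]
from which the stated formula follows by taking the largest such $i$. My two tools are Hochster's formula for the multigraded pieces of local cohomology of a Stanley--Reisner ring,
\[
\dim_K H^j_{\mathfrak m}(K[\Delta])_{-F} = \dim_K \tilde{H}_{j-|F|-1}(\mathrm{link}_\Delta(F);K), \qquad F\in\Delta,
\]
with vanishing on every other squarefree multidegree, and Reisner's criterion for Cohen--Macaulayness.

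Using $\depth(K[\Delta]) = \min\{j : H^j_{\mathfrak m}(K[\Delta])\neq 0\}$, Hochster's formula reformulates the depth inequality as
\[
\tilde{H}_k(\mathrm{link}_\Delta(F); K) = 0 \text{ for every } F\in\Delta \text{ and every } k < i-|F|.
\]
On the Cohen--Macaulay side, Reisner's criterion applied to $\Delta^i$ requires $\tilde{H}_k(\mathrm{link}_{\Delta^i}(G);K)=0$ for every $G\in\Delta^i$ and every $k<\dim\mathrm{link}_{\Delta^i}(G)$. The identity $\mathrm{link}_{\Delta^i}(G) = (\mathrm{link}_\Delta(G))^{\,i-|G|}$, together with the fact that the reduced homology of a simplicial complex coincides with that of its $d$-skeleton in every degree $<d$, lets me rewrite the Reisner condition purely in terms of $\tilde{H}_k(\mathrm{link}_\Delta(G); K)$.

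The equivalence then reduces to matching these two sets of vanishing conditions. My key observation is that either hypothesis forces $\Delta^i$ to be pure of dimension $i$: in the forward direction, a facet $F$ of $\Delta$ with $|F|\leq i$ would give $\tilde{H}_{-1}(\mathrm{link}_\Delta(F); K)\neq 0$ with $-1<i-|F|$, contradicting the depth hypothesis, while in the reverse direction Cohen--Macaulayness of $K[\Delta^i]$ forces purity directly. Once purity is secured, every $G\in\Delta^i$ satisfies $\dim\mathrm{link}_{\Delta^i}(G) = i-|G|$, and the Hochster-- and Reisner--derived conditions impose exactly the same vanishing on $\tilde{H}_k(\mathrm{link}_\Delta(G); K)$ in the range $k<i-|G|$; faces $F\in\Delta$ with $|F|>i+1$ contribute only vacuous conditions on the depth side. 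The main obstacle is precisely this purity/bookkeeping step, since without it the Reisner condition on $\Delta^i$ could range only up to a dimension strictly smaller than $i-|G|$ and would be strictly weaker than the depth condition; the boundary cases $i=-1$ (where $K[\Delta^{-1}]=K$ and the equivalence is trivial) and $i=\dim\Delta$ (where $\Delta^i=\Delta$ and one recovers the usual Cohen--Macaulay criterion) serve as sanity checks.
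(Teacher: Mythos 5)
The paper does not prove this statement; it is quoted verbatim from Smith's article \cite{dsmith} and used as a black box, so there is no internal proof to compare against. On its own terms your argument is correct and is the standard modern route to Smith's theorem: Hochster's formula converts $\depth K[\Delta]\geq i+1$ into the vanishing of $\tilde H_k(\mathrm{link}_\Delta(F);K)$ for $k<i-|F|$, Reisner's criterion converts Cohen--Macaulayness of $K[\Delta^i]$ into the analogous vanishing for links in the skeleton, and the two identities $\mathrm{link}_{\Delta^i}(G)=(\mathrm{link}_\Delta(G))^{\,i-|G|}$ and $\tilde H_k(\Sigma^d)\cong\tilde H_k(\Sigma)$ for $k<d$ let you translate one set of conditions into the other. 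You correctly isolate the one genuine subtlety, namely that the translation only works once $\Delta^i$ is known to be pure of dimension $i$, and you verify that purity follows from either hypothesis (from the depth side via $\tilde H_{-1}$ of the link of a low-dimensional facet, from the CM side directly). The only points a fully written-out version should make explicit are that the conditions for faces with $|F|\geq i+1$ are vacuous because reduced (co)homology below degree $-1$ vanishes, and that the resulting down-set structure of $\{i : K[\Delta^i]\ \mbox{is CM}\}$ is what converts the chain of equivalences into the stated equality; both are implicit in what you wrote. I see no gap.
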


The {\it star} of a face 
$\sigma$ 
in a simplicial complex $\Delta$, denoted  ${\rm
star}_\Delta(\sigma)$, is defined to be
the subcomplex of $\Delta$ generated by all facets of $\Delta$ that 
contain $\sigma$.

\begin{lemma}{\cite[Theorem~3.1]{Caviglia-et-al}}
\label{Caviglia-et-al-square-free} 
Let $I\subset R$ be a squarefree monomial ideal
and let $f$ be a squarefree monomial. Then 
$\depth(R/(I\colon f))\geq\depth(R/I)$.
\end{lemma}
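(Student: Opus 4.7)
The plan is to work on the Stanley--Reisner side and invoke Theorem~\ref{aug21-01}. Since $I$ is squarefree, write $I=I_\Delta$ for some simplicial complex $\Delta$ on $V=\{x_1,\ldots,x_n\}$ and $f=\prod_{x_i\in\sigma}x_i$ for a subset $\sigma\subseteq V$. If $\sigma\notin\Delta$, then $f\in I_\Delta$, so $(I\colon f)=R$ and the inequality holds trivially (with the convention that the zero module has infinite depth). Assume henceforth that $\sigma\in\Delta$.

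The first step is the identification $(I_\Delta\colon f)=I_{{\rm star}_\Delta(\sigma)}$: a squarefree monomial $x^F$ lies in the colon iff $F\cup\sigma\notin\Delta$, i.e., iff $F\notin{\rm star}_\Delta(\sigma)$. The star decomposes as the join $\langle\sigma\rangle*{\rm lk}_\Delta(\sigma)$ of the full simplex on $\sigma$ with the link ${\rm lk}_\Delta(\sigma)=\{G\in\Delta\mid G\cap\sigma=\emptyset,\ G\cup\sigma\in\Delta\}$, and the minimal non-faces of the star inside $V$ coincide with those of the link viewed inside $V\setminus\sigma$. Hence the Stanley--Reisner ring of the star is obtained from that of the link by freely adjoining the variables in $\sigma$, so
\[
\depth\bigl(R/I_{{\rm star}_\Delta(\sigma)}\bigr)=|\sigma|+\depth\bigl(R/I_{{\rm lk}_\Delta(\sigma)}\bigr).
\]
It therefore suffices to prove the link inequality $\depth(R/I_{{\rm lk}_\Delta(\sigma)})\geq\depth(R/I_\Delta)-|\sigma|$.

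Set $d=\depth(R/I_\Delta)$. If $d\leq|\sigma|$ the link inequality is trivial because depth is non-negative. Otherwise, Theorem~\ref{aug21-01} gives that $K[\Delta^{d-1}]$ is Cohen--Macaulay, and a direct comparison of definitions yields the identity
\[
{\rm lk}_{\Delta^{d-1}}(\sigma)\ =\ \bigl({\rm lk}_\Delta(\sigma)\bigr)^{d-1-|\sigma|}.
\]
Links of Cohen--Macaulay complexes are Cohen--Macaulay (an immediate consequence of Reisner's criterion), so the Stanley--Reisner ring of this common complex is Cohen--Macaulay. Applying Theorem~\ref{aug21-01} to ${\rm lk}_\Delta(\sigma)$ then produces $\depth(R/I_{{\rm lk}_\Delta(\sigma)})\geq 1+(d-1-|\sigma|)=d-|\sigma|$, as desired.

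The main obstacle is isolating the skeleton/link identity ${\rm lk}_{\Delta^{d-1}}(\sigma)=({\rm lk}_\Delta(\sigma))^{d-1-|\sigma|}$ and combining it with the preservation of Cohen--Macaulayness under links; once these two ingredients are in place, the argument reduces to two short applications of Theorem~\ref{aug21-01}.
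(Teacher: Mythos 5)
Your proof is correct and follows essentially the same route as the paper's: both identify $(I\colon f)$ with the Stanley--Reisner ideal of ${\rm star}_\Delta(\sigma)$ and then run the skeleton characterization of depth from Theorem~\ref{aug21-01}. Where the paper compares skeletons of the star directly, via $({\rm star}_\Delta(\sigma))^i={\rm star}_{\Delta^i}(\sigma)$ together with the fact that stars of faces of Cohen--Macaulay complexes are Cohen--Macaulay, you instead factor the star as the join $\langle\sigma\rangle*{\rm lk}_\Delta(\sigma)$, use additivity of depth over such joins, and apply the skeleton theorem to the link; the two executions are interchangeable, since the star is Cohen--Macaulay exactly when the link is. One small point you should make explicit: in the final step you need $d-1-|\sigma|\leq\dim({\rm lk}_\Delta(\sigma))$, for otherwise the skeleton $({\rm lk}_\Delta(\sigma))^{d-1-|\sigma|}$ degenerates to the whole link and Theorem~\ref{aug21-01} does not return the bound $d-|\sigma|$. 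This is where purity enters: $K[\Delta^{d-1}]$ being Cohen--Macaulay forces $\Delta^{d-1}$ to be pure of dimension $d-1$, and since $\sigma\in\Delta^{d-1}$ (because $|\sigma|<d$) it lies in a facet $F$ with $|F|=d$, whence $F\setminus\sigma$ is a face of ${\rm lk}_\Delta(\sigma)$ of dimension $d-1-|\sigma|$. This verification plays exactly the role of the paper's claim that $i\leq d'$, so your argument is complete once it is inserted.
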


\begin{proof} Let $\sigma={\rm supp}(f)$ be the set of all variables
that occur in $f$. We may assume that $f$ is a zero divisor of $R/I$
because otherwise $(I\colon f)=I$ and there is nothing to prove.  We
may also assume that $f$ is not in all minimal primes of $I$ because
in this case $(I\colon f)=R$ and $\depth(0)=\infty$. Let $\Delta$ 
and $\Delta'$ be the Stanley--Reisner complexes of $I$ and 
$(I\colon f)$, respectively. Setting $d=\dim(\Delta)$,
$d'=\dim(\Delta')$, one has $d'\leq d$. Assume that $\Delta^i$ is
Cohen--Macaulay for some $i\leq d$. We claim that $i\leq d'$. If
$i>d'$, take a facet $F$ of $\Delta'$ of dimension $d'$, that is, $F$
is a facet of $\Delta$ of dimension $d'$ containing $\sigma$. As $F$ is 
a face of $\Delta^i$ and this complex is pure, we get that $F$ is
properly contained in a face of $\Delta$ of dimension $i$, a
contradiction. Hence $i\leq d'$. The simplicial complex $\Delta'$ is
equal to ${\rm star}_{\Delta}(\sigma)$. Therefore, from the equalities
$$
(\Delta')^i=({\rm star}_{\Delta}(\sigma))^i={\rm
star}_{\Delta^i}(\sigma),
$$
and using that the star of a face of a Cohen--Macaulay complex is
again Cohen--Macaulay 
\cite[p.~224]{monalg-rev}, we get that $(\Delta')^i$ is
Cohen--Macaulay. Hence, by Theorem~\ref{aug21-01}, it follows that 
the depth of $R/(I\colon f)$ is greater than or equal to $\depth(R/I)$.
\end{proof}

A common technique in commutative algebra is to start with a short exact sequence of the form
$$0 \longrightarrow R/(I\colon f)[-k]\stackrel{f}{\longrightarrow}R/I
\longrightarrow R/(I,f) \longrightarrow 0,$$ 
where $I\subset R$ is a graded ideal and $f$ is a homogeneous
polynomial of degree $k$, and use information about two of the terms to glean desired information about the third. Both depth and regularity are known to behave well relative to short exact sequences. There are several versions of the depth lemma that appear in the literature. The following lemmas provide the information relating the depths and regularity of the terms of a short exact sequence in a format that will be particularly useful in the remainder of this paper.

\begin{lemma}\label{depth-lemma-duality}
Let $0\rightarrow N\rightarrow M\rightarrow L\rightarrow 0$ be a 
short exact sequence of modules over a local ring $R$. The following 
conditions are equivalent.
\begin{enumerate}
\item[\rm(a)] $\depth(N)\geq \depth(M)$.
\item[\rm(b)] $\depth(M)=\depth(N)$ or $\depth(M)=\depth(L)$.
\item[\rm(c)] $\depth(L)\geq\depth(M)-1$.
\end{enumerate}
\end{lemma}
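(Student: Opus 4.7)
The plan is to deduce all three equivalences from the classical Depth Lemma applied to the short exact sequence $0\to N\to M\to L\to 0$, which, via the long exact sequence of local cohomology $H^i_{\mathfrak m}(-)$, yields the three inequalities
\begin{align*}
\depth(N) &\geq \min\{\depth(M),\, \depth(L)+1\},\\
\depth(M) &\geq \min\{\depth(N),\, \depth(L)\},\\
\depth(L) &\geq \min\{\depth(M),\, \depth(N)-1\}.
\end{align*}
I will prove the cyclic implications (c)$\Rightarrow$(a)$\Rightarrow$(b)$\Rightarrow$(c); each step rests essentially on just one of these three inequalities.

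The implication (c)$\Rightarrow$(a) is immediate: if $\depth(L)+1\geq\depth(M)$, then $\min\{\depth(M),\depth(L)+1\}=\depth(M)$, and the first inequality gives $\depth(N)\geq\depth(M)$. Symmetrically, (b)$\Rightarrow$(c) follows by a short case split on the two alternatives of (b): if $\depth(M)=\depth(L)$, then (c) is trivial, while if $\depth(M)=\depth(N)$, the third inequality collapses to $\depth(L)\geq\min\{\depth(M),\depth(M)-1\}=\depth(M)-1$.

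The substantive step is (a)$\Rightarrow$(b). Assuming $\depth(N)\geq\depth(M)$, I would split into the cases $\depth(L)\geq\depth(N)$ and $\depth(L)<\depth(N)$. In the first case the second inequality forces $\depth(M)\geq\depth(N)$, hence $\depth(M)=\depth(N)$. In the second case the second inequality gives $\depth(M)\geq\depth(L)$, and the third inequality produces a further dichotomy according to which term of $\min\{\depth(M),\depth(N)-1\}$ is smaller: either $\depth(M)\geq\depth(N)$ (forcing $\depth(M)=\depth(N)$ by the assumption), or $\depth(L)\geq\depth(M)$ (forcing $\depth(M)=\depth(L)$). In every case $\depth(M)$ coincides with one of the outer depths, as required by (b).

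The only obstacle is careful bookkeeping of the min-terms in each inequality; no input beyond the standard Depth Lemma is needed, and the local hypothesis is used only to legitimize the cohomological computation of depth via $H^i_{\mathfrak m}(-)$.
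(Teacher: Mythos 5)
Your proposal is correct and follows essentially the same route as the paper, which simply invokes the classical depth lemma (the three $\min$-inequalities coming from the long exact sequence of local cohomology) and leaves the bookkeeping to the reader. You have merely written out explicitly the case analysis, including the integer-valued dichotomy needed in the step (a)$\Rightarrow$(b), so no new idea is involved.
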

\begin{proof} It follows from the depth lemma
\cite[Lemma~2.3.9]{monalg-rev}. 
\end{proof}
There is a similar statement for the regularity.
\begin{lemma}\label{reg-lemma-special}
Let $0\rightarrow N\rightarrow M\rightarrow L\rightarrow 0$ be a  
short exact sequence of graded finitely generated $R$-modules. 
The following conditions are equivalent.
\begin{itemize}
\item[\rm(a)] ${\rm reg}(M)\geq{\rm reg}(N)-1$.
\item[\rm(b)] ${\rm reg}(M)={\rm reg}(N)$ or ${\rm reg}(M)={\rm reg}(L)$.
\item[\rm(c)] ${\rm reg}(M)\geq{\rm reg}(L)$.
\end{itemize}
\end{lemma}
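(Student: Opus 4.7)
The plan is to reduce this to the three standard inequalities governing regularity in a short exact sequence of finitely generated graded $R$-modules, which follow from the associated long exact sequence of $\operatorname{Tor}(-,K)$ (or, equivalently, of local cohomology). Writing $a=\reg(N)$, $b=\reg(M)$, $c=\reg(L)$, these are (i)~$b\le\max\{a,c\}$, (ii)~$a\le\max\{b,c+1\}$, and (iii)~$c\le\max\{b,a-1\}$. For (iii), if $\operatorname{Tor}_i^R(L,K)_j\ne 0$, then exactness at that spot in the long exact Tor sequence forces either $\operatorname{Tor}_i^R(M,K)_j\ne 0$, which gives $j-i\le b$, or $\operatorname{Tor}_{i-1}^R(N,K)_j\ne 0$, which gives $j-i\le a-1$; the derivations of (i) and (ii) are entirely analogous.

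With these inequalities in hand I would prove the equivalences by cycling (a)$\,\Rightarrow\,$(c)$\,\Rightarrow\,$(b)$\,\Rightarrow\,$(a). The implication (a)$\,\Rightarrow\,$(c) is immediate: if $b\ge a-1$, then (iii) collapses to $c\le\max\{b,a-1\}=b$. The implication (b)$\,\Rightarrow\,$(a) splits into two easy cases: if $b=a$ there is nothing to prove, and if $b=c$ then (ii) gives $a\le\max\{b,b+1\}=b+1$, whence $b\ge a-1$.

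The main step is (c)$\,\Rightarrow\,$(b), which requires a short case analysis. Assume $b\ge c$ and apply (i) to get $b\le\max\{a,c\}$. If $c\ge a$, then $b\le c$ together with $b\ge c$ forces $b=c$, so (b) holds. Otherwise $c<a$ and $b\le a$; if $b=a$ we are done, so assume $b<a$. Then (ii) gives $a\le\max\{b,c+1\}$, and since $a>b$ the maximum must be achieved by $c+1$, so $c+1\ge a$, i.e., $c\ge a-1$. Combined with $c\le b<a$ this forces $c=a-1=b$, and again (b) holds.

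The only real obstacle is keeping this last case analysis tidy; the overall pattern mirrors the proof of Lemma~\ref{depth-lemma-duality}, and exhibits the same duality between depth and regularity that the paper exploits throughout Section~\ref{depth-monomial}.
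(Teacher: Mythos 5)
Your proposal is correct and follows essentially the same route as the paper: the paper's proof is a one-line appeal to \cite[Corollary~20.19]{Eisen}, which supplies exactly the three inequalities $\reg(M)\leq\max\{\reg(N),\reg(L)\}$, $\reg(N)\leq\max\{\reg(M),\reg(L)+1\}$, and $\reg(L)\leq\max\{\reg(M),\reg(N)-1\}$ that you derive from the long exact Tor sequence. Your case analysis establishing the cycle (a)$\Rightarrow$(c)$\Rightarrow$(b)$\Rightarrow$(a) is sound and merely makes explicit what the paper leaves to the reader.
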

\begin{proof} It follows from \cite[Corollary~20.19]{Eisen}.
\end{proof}
\begin{lemma}\label{reg-lemma-special-k}
Let $0\rightarrow N\rightarrow M\rightarrow L\rightarrow 0$ be an 
exact sequence of graded finitely generated $R$-modules with
homomorphisms of degree $0$ and $k\geq 1$ an integer.  
The following are equivalent.
\begin{itemize}
\item[\rm(a)] ${\rm reg}(N)\leq{\rm reg}(M)+k$.
\item[\rm(b)] ${\rm reg}(L)\leq{\rm reg}(M)+k-1$.
\end{itemize}
\end{lemma}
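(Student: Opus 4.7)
The plan is to derive both implications directly from the two standard estimates that govern regularity in a short exact sequence, namely
\[
\reg(N)\leq\max\{\reg(M),\,\reg(L)+1\}\quad\text{and}\quad\reg(L)\leq\max\{\reg(N)-1,\,\reg(M)\},
\]
both of which are contained in \cite[Corollary~20.19]{Eisen} and were already invoked in Lemma~\ref{reg-lemma-special}. The hypothesis that the maps have degree zero is what ensures these clean inequalities; the integer $k\geq 1$ plays no structural role beyond feeding the arithmetic below.

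For the direction (b)$\Rightarrow$(a), I would assume $\reg(L)\leq\reg(M)+k-1$, so that $\reg(L)+1\leq\reg(M)+k$. Plugging into the first estimate gives
\[
\reg(N)\leq\max\{\reg(M),\,\reg(M)+k\}=\reg(M)+k,
\]
where the equality uses $k\geq 1$. Conversely, for (a)$\Rightarrow$(b), I would assume $\reg(N)\leq\reg(M)+k$, so that $\reg(N)-1\leq\reg(M)+k-1$; the second estimate then yields
\[
\reg(L)\leq\max\{\reg(M)+k-1,\,\reg(M)\}=\reg(M)+k-1,
\]
again because $k\geq 1$.

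There is no real obstacle here: the lemma is just a symmetric bookkeeping statement extracted from the two standard one-step bounds, and the only subtlety is keeping track of the shift by $1$ (which is why the asymmetric pair $k$ and $k-1$ appears in the statement) and using $k\geq 1$ to absorb $\reg(M)$ into $\reg(M)+k$ and $\reg(M)+k-1$. Since the same tool underlies Lemma~\ref{reg-lemma-special}, this proof will mirror its style and can be stated in a couple of lines by citing \cite[Corollary~20.19]{Eisen}.
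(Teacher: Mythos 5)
Your proof is correct and follows essentially the same route as the paper: both directions are read off from the two standard bounds $\reg(N)\leq\max\{\reg(M),\reg(L)+1\}$ and $\reg(L)\leq\max\{\reg(N)-1,\reg(M)\}$ of \cite[Corollary~20.19]{Eisen}, with $k\geq 1$ used to absorb the $\reg(M)$ term in each maximum. The paper's version of (a)$\Rightarrow$(b) inserts a reduction to the case $\reg(M)\leq\reg(L)-1$, but as your argument shows this is not actually needed.
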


\demo (a) $\Rightarrow$ (b): We may assume
$\reg(M)\leq\reg(L)-1$, otherwise there is nothing to prove. Hence, by
\cite[Corollary~20.19]{Eisen}, we get
$$
\reg(L)\leq\max(\reg(N)-1,\reg(M))\leq\reg(M)+k-1.
$$
\quad (b) $\Rightarrow$ (a): As $\reg(L)+1\leq\reg(M)+k$, by
\cite[Corollary~20.19]{Eisen}, we get
$$
\reg(N)\leq\max(\reg(M),\reg(L)+1)\leq\reg(M)+k.\eqno\Box 
$$

Let $\mathcal{C}$ be a {\it clutter\/} with vertex 
set $X=\{x_1,\ldots,x_n\}$, that is, $\mathcal{C}$ consists of a 
family of subsets of $X$, called edges, none of which is included in
another. The sets of
vertices and edges of $\mathcal C$ are denoted by $V(\mathcal{C})$ and
$E(\mathcal{C})$, respectively. If $V\subset X$, 
the clutter obtained from $\mathcal{C}$ 
by deleting all edges of $\mathcal{C}$ that intersect $V$ will be
denoted by $\mathcal{C}\setminus V$. The {\it edge ideal\/} 
of $\mathcal{C}$, denoted $I(\mathcal{C})$,
is the ideal of $R$ generated by all  
squarefree monomials $x_e=\prod_{x_i\in e}x_i$ such that $e\in E(\mathcal{C})$.
The {\it ideal of covers\/} $I(\mathcal{C})^\vee$ of $\mathcal{C}$ is
the edge ideal of $\mathcal{C}^\vee$, the
clutter of minimal vertex covers of $\mathcal{C}$ \cite[p.~221]{monalg-rev}. The ideal
$I(\mathcal{C})^\vee$ is also called the {\it Alexander dual\/} of
$I(\mathcal{C})$ or simply the {\it cover ideal} of $\mathcal{C}$. 

\begin{lemma}\label{duality-formula} Let $I(\mathcal{C})\subset R$ be the edge
ideal of a clutter $\mathcal{C}$ and let $f=x_{i_1}\cdots x_{i_k}$ be a
squarefree monomial of $R$. The following hold.
\begin{enumerate}
\item[(i)] $(I(\mathcal{C})^\vee\colon f)^\vee=
I(\mathcal{C}\setminus\{x_{i_1},\ldots,x_{i_k}\})$.
\item[(ii)] $(I(\mathcal{C})\colon f)^\vee=
I(\mathcal{C}^\vee\setminus\{x_{i_1},\ldots,x_{i_k}\})$.
\item[(iii)] If $x_i$ is a variable, then
$(I(\mathcal{C}),x_i)^\vee=x_iI(\mathcal{C}\setminus\{x_i\})^\vee$.
\end{enumerate}
\end{lemma}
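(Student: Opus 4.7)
The plan is to verify each identity by analyzing minimal vertex covers and using that a squarefree monomial ideal is determined by its minimal primes—equivalently, by the facets of its Stanley--Reisner complex. Recall that $I(\mathcal{C})^\vee=I(\mathcal{C}^\vee)$ is generated by the monomials $x_C=\prod_{x_j\in C}x_j$ as $C$ ranges over the minimal vertex covers of $\mathcal{C}$, and that Alexander duality is an involution on squarefree monomial ideals that swaps the roles of minimal generators and minimal primes.

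For part (i), write $S=\{x_{i_1},\ldots,x_{i_k}\}$. A direct colon computation shows that $(I(\mathcal{C})^\vee\colon f)$ is generated by the monomials $x_{C\setminus S}$ as $C$ runs over the minimal vertex covers of $\mathcal{C}$. By Alexander duality it suffices to prove that the minimal primes of $I(\mathcal{C}\setminus S)$—which correspond to the minimal vertex covers of $\mathcal{C}\setminus S$—are in bijection with the minimal elements of $\{C\setminus S\}$. One direction is easy: for any minimal vertex cover $C$ of $\mathcal{C}$, the set $C\setminus S$ meets every edge of $\mathcal{C}$ that is disjoint from $S$, i.e.\ every edge of $\mathcal{C}\setminus S$, so it is a vertex cover of $\mathcal{C}\setminus S$. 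The converse requires a lifting argument: given a minimal vertex cover $C'$ of $\mathcal{C}\setminus S$, extend $C'$ to a vertex cover of $\mathcal{C}$ by appending vertices from $S$ that hit the remaining edges, then pass to a minimal cover $C\subseteq C'\cup S$; since $C\setminus S\subseteq C'$ is itself a cover of $\mathcal{C}\setminus S$, minimality of $C'$ forces $C\setminus S=C'$. Combining these statements yields the required bijection of minimal elements, and hence the identity in (i).

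Part (ii) is obtained by applying (i) with $\mathcal{C}$ replaced by $\mathcal{C}^\vee$, together with the classical duality $(\mathcal{C}^\vee)^\vee=\mathcal{C}$ for the blocker of a clutter; this gives $(I(\mathcal{C})\colon f)^\vee=I(\mathcal{C}^\vee\setminus S)$. For part (iii), I will argue directly at the level of minimal primes: the minimal primes of $(I(\mathcal{C}),x_i)$ are precisely the primes $(x_i,x_{C'})$ with $C'$ a minimal vertex cover of $\mathcal{C}\setminus\{x_i\}$, since any minimal vertex cover of $\mathcal{C}$ containing $x_i$ must cover the edges of $\mathcal{C}$ through $x_i$ trivially and cover the remaining edges—which are exactly the edges of $\mathcal{C}\setminus\{x_i\}$—via a minimal cover $C'$ of $\mathcal{C}\setminus\{x_i\}$. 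Taking the product of the variables in each minimal prime to form the Alexander dual then yields $(I(\mathcal{C}),x_i)^\vee=x_i\cdot I(\mathcal{C}\setminus\{x_i\})^\vee$.

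The main obstacle will be the lifting step in (i), where care is needed to ensure that every minimal vertex cover of $\mathcal{C}\setminus S$ arises as $C\setminus S$ for some minimal vertex cover $C$ of $\mathcal{C}$ and, conversely, that a set $C\setminus S$ is minimal in the collection only when it is an actual minimal vertex cover of $\mathcal{C}\setminus S$. Once this combinatorial correspondence is established, the passage to Alexander duals in (i)--(iii) is routine bookkeeping.
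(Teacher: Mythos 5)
Your proof is correct, but for part (i) it takes a genuinely different route from the paper. You work on the generator side of $I(\mathcal{C})^\vee$: you compute $(I(\mathcal{C})^\vee\colon f)$ as the ideal generated by the monomials $x_{C\setminus S}$ and then prove, via an explicit lifting argument, that the minimal elements of $\{C\setminus S\}$ are exactly the minimal vertex covers of $\mathcal{C}\setminus S$ (in blocker language, $(\mathcal{C}\setminus S)^\vee=\mathcal{C}^\vee/S$). The paper instead works on the intersection side: it writes $I(\mathcal{C})^\vee=\bigcap_{e\in E(\mathcal{C})}(e)$, uses that colon distributes over intersections and that $((e)\colon f)$ is $(e)$ or $R$ according to whether $e\cap S=\emptyset$, so the intersection collapses to $\bigcap_{e\in E(\mathcal{C}\setminus S)}(e)=I(\mathcal{C}\setminus S)^\vee$, and then dualizes. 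The paper's computation is a one-liner that sidesteps your lifting lemma entirely; your argument is more elementary and makes the underlying deletion/contraction combinatorics explicit, at the cost of the extra minimality bookkeeping you correctly flag as the main obstacle. Parts (ii) and (iii) match the paper's approach (replace $\mathcal{C}$ by $\mathcal{C}^\vee$, resp.\ identify the minimal primes of $(I(\mathcal{C}),x_i)=(I(\mathcal{C}\setminus\{x_i\}),x_i)$). One small imprecision in your (iii): the relevant sets are those \emph{minimal among vertex covers of $\mathcal{C}$ that contain $x_i$}, i.e.\ the sets $\{x_i\}\cup C'$ with $C'$ a minimal cover of $\mathcal{C}\setminus\{x_i\}$; these need not be minimal vertex covers of $\mathcal{C}$, so the phrase ``any minimal vertex cover of $\mathcal{C}$ containing $x_i$'' should be adjusted, though the displayed description of the minimal primes and the resulting identity are correct.
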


\begin{proof} 
(i): Let $E(\mathcal{C})$ be the set of edges of
$\mathcal{C}$. We set $V=\{x_{i_1},\ldots,x_{i_k}\}$ and
$I=I(\mathcal{C})$. 
Then
\begin{equation*}
(I^\vee\colon f)^\vee=\left(\bigcap_{e\in E(\mathcal{C})}(e)\colon 
f\right)^\vee=\left(\bigcap_{e\in E(\mathcal{C}\setminus
V)}(e)\right)^\vee=(I(\mathcal{C}\setminus V)^\vee)^\vee
=I(\mathcal{C}\setminus V).
\end{equation*}
\quad (ii): Notice the equalities 
$I(\mathcal{C}^\vee)^\vee=(I(\mathcal{C})^\vee)^\vee=I(\mathcal{C})$.
Thus this part follows from (i) by replacing $\mathcal{C}$ with
$\mathcal{C}^\vee$. 

(iii): Setting $L=(I(\mathcal{C}),x_i)$ and
$J=I(\mathcal{C}\setminus\{x_i\})$, it follows 
readily that
$$ 
L=(I(\mathcal{C}\setminus\{x_i\}),x_i)=(J,x_i)=\bigcap_{\mathfrak{p}\in{\rm
Ass}(R/J)}(x_i,\mathfrak{p}).
$$
\quad Hence, by duality \cite[Theorem~6.3.39]{monalg-rev}, 
one has $(I(\mathcal{C}),x_i)^\vee=x_iI(\mathcal{C}\setminus\{x_i\})^\vee$.
\end{proof}

Our interest in the duality results above is partially motivated by
the following result relating regularity and projective dimension,
and thus depth, when passing to the dual.  

\begin{theorem}{\rm(Terai \cite{terai})}
\label{terai-duality-theorem} If $I\subset R$ is a 
squarefree monomial ideal, then 
$$
{\rm reg}(I)=1+{\rm reg}(R/I)={\rm pd}(R/I^\vee).
$$
\end{theorem}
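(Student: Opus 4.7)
The plan is to split the statement into two equalities. The first, $\reg(I) = 1 + \reg(R/I)$, is immediate from the short exact sequence $0\rightarrow I\rightarrow R\rightarrow R/I\rightarrow 0$: a minimal graded free resolution of $R/I$ is obtained from one of $I$ by prepending the free module $R$ in homological position zero, so $\beta_{i,j}(I)=\beta_{i+1,j}(R/I)$ for all $i\geq 0$ and all $j$. Taking the maximum of $j-i$ over nonzero Betti numbers then shifts the answer by exactly one, giving $\reg(I)=\reg(R/I)+1$.

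For the harder equality $\reg(I)={\rm pd}(R/I^\vee)$, write $I=I_\Delta$ for its Stanley--Reisner complex $\Delta$, so that $I^\vee=I_{\Delta^\vee}$, where $\Delta^\vee=\{F\subseteq V\,|\,V\setminus F\notin\Delta\}$ is the Alexander dual. The plan is to match Betti numbers on the two sides using Hochster's formula for Stanley--Reisner rings, which expresses the multigraded Betti numbers as
$$
\beta_{i,\sigma}(R/I_\Delta)=\dim_K\tilde{H}_{|\sigma|-i-1}(\Delta|_\sigma;K)
$$
for each $\sigma\subseteq V$, where $\Delta|_\sigma=\{F\in\Delta\,|\,F\subseteq\sigma\}$ is the induced subcomplex, together with the analogous formula for $R/I_{\Delta^\vee}$.

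The key combinatorial ingredient is Alexander duality for induced subcomplexes: for every $\sigma\subseteq V$, the reduced simplicial homology of $\Delta|_\sigma$ is isomorphic, after an index shift coming from topological duality on the simplex on $\sigma$, to the reduced homology of a complementary piece of $\Delta^\vee$ supported on $\sigma$. Plugging this isomorphism into Hochster's formula on both sides pairs each nonzero $\beta_{i,\sigma}(I_\Delta)$ having internal-shift value $|\sigma|-i$ with a nonzero multigraded Betti number of $R/I_{\Delta^\vee}$ whose homological index equals $|\sigma|-i$. Taking maxima across all $\sigma$ yields
$$
\reg(I)=\max\{\,|\sigma|-i\;:\;\beta_{i,\sigma}(I_\Delta)\neq 0\,\}=\max\{\,k\;:\;\beta_{k,\tau}(R/I_{\Delta^\vee})\neq 0\,\}={\rm pd}(R/I_{\Delta^\vee}).
$$
The main obstacle is calibrating the combinatorial Alexander duality to the correct complementary complex---induced subcomplex of $\Delta^\vee$, link or deletion of $V\setminus\sigma$ in $\Delta^\vee$---and confirming the exact degree shift; conventions in the literature vary, and Hochster's formula must be applied compatibly on both sides. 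Once this is settled, the remainder is routine bookkeeping between multigraded and $\mathbb{Z}$-graded Betti numbers.
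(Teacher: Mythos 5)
The paper does not prove this statement at all: it is quoted as an attributed result of Terai, so there is no internal proof to measure your argument against. Your first equality, $\reg(I)=1+\reg(R/I)$ via the shift of Betti numbers along $0\to I\to R\to R/I\to 0$, is correct and complete.

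For the second equality your outline follows the standard route (Hochster's formula plus combinatorial Alexander duality), but the step you defer as ``calibration'' is exactly where the proof lives, and as stated your pairing does not go through. Hochster's formula applied to $R/I_{\Delta^\vee}$ in multidegree $\tau$ produces the homology of the \emph{induced subcomplex} $\Delta^\vee|_\tau$, and combinatorial Alexander duality (taken inside the simplex on $\tau$) identifies this with the cohomology of $\{G\subseteq\tau\,:\,G\cup(V\setminus\tau)\in\Delta\}$, i.e.\ with a \emph{link} of $\Delta$, not with an induced subcomplex $\Delta|_\sigma$. Concretely, $(\Delta|_\tau)^\vee\neq\Delta^\vee|_\tau$ in general; rather $\Delta^\vee|_\tau=\bigl(\mathrm{link}_\Delta(V\setminus\tau)\bigr)^\vee$. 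So ``plugging the duality isomorphism into Hochster's formula on both sides'' matches $\mathrm{pd}(R/I^\vee)$ with $\max\{i:\tilde H^{i-2}(\mathrm{link}_\Delta(V\setminus\tau);K)\neq 0\}$, while $\reg(I)$ is $\max\{k:\tilde H^{k-2}(\Delta|_\sigma;K)\neq 0\}$, and you still need the nontrivial fact that these two maxima agree --- which is usually supplied by invoking the local-cohomology (link) version of Hochster's formula alongside the Tor (restriction) version, or by a separate long-exact-sequence argument comparing links and restrictions. Until you supply that identification and fix the degree shifts, the argument is an accurate roadmap rather than a proof; since the result is classical and the paper itself only cites it, the efficient course is to cite Terai (or Herzog--Hibi, Miller--Sturmfels) rather than to reprove it.
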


In \cite[Theorem~3.1]{Caviglia-et-al} it is shown that conditions
(ii) and (iv) of the next result hold (cf. \cite[Lemmas~5.1 and 2.10]{Dao-Huneke-Schweig}). 
For squarefree monomial ideals---using the above duality
theorem of Terai \cite{terai}---we show that these conditions are in
fact equivalent (cf. Remark~\ref{sep17-17}). Roughly speaking the 
inequalities of (ii) and (iv) are dual of each other via the duality
theorem of Terai. 

\begin{proposition}\label{square-free-case} Let $I\subset R$ be a
squarefree monomial ideal and let $f=x_{i_1}\cdots x_{i_k}$ be a
squarefree monomial of $R$ of degree $k$. Then any of the following
equivalent conditions hold.
\begin{enumerate}
\item[(i)] $\depth(R/(f,I))\geq\depth(R/I)-1$.
\item[(ii)] \cite[Theorem~3.1]{Caviglia-et-al} $\depth(R/I)\leq\depth(R/(I\colon f))$.
\item[(iii)] $\depth(R/(x_{i_1},\ldots,x_{i_k},I))\geq \depth(R/I)-k$.
\item[(iv)] \cite[Theorem~3.1]{Caviglia-et-al} $\reg(R/I)\geq\reg(R/(I\colon f))$.
\item[(v)] $\reg(R/(f,I))\leq\reg(R/I)+k-1$.
\end{enumerate}
\end{proposition}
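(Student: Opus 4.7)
The plan is to use the standard short exact sequence
$$0 \longrightarrow R/(I:f)[-k] \xrightarrow{\,\cdot f\,} R/I \longrightarrow R/(I,f) \longrightarrow 0$$
together with Lemmas~\ref{depth-lemma-duality} and \ref{reg-lemma-special-k} to obtain the pairs of equivalences (i)$\Leftrightarrow$(ii) and (iv)$\Leftrightarrow$(v), and then to invoke Terai's theorem (Theorem~\ref{terai-duality-theorem}) together with the duality formulas of Lemma~\ref{duality-formula} to bridge the depth and regularity sides and to handle (iii).

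For (i)$\Leftrightarrow$(ii), I would apply Lemma~\ref{depth-lemma-duality} with $N=R/(I:f)[-k]$, $M=R/I$, and $L=R/(I,f)$; since the shift does not affect depth, $\depth(N)=\depth(R/(I:f))$, so condition (a) of the lemma is precisely (ii) and condition (c) is precisely (i). For (iv)$\Leftrightarrow$(v), I would apply Lemma~\ref{reg-lemma-special-k} with the same integer $k$ to the same short exact sequence: using $\reg(N)=\reg(R/(I:f))+k$, condition (a) of that lemma reduces to $\reg(R/(I:f))\leq\reg(R/I)$, which is (iv), while condition (b) is (v).

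For the passage from the depth cluster to the regularity cluster, I would use Terai duality. Combining Theorem~\ref{terai-duality-theorem} with the Auslander--Buchsbaum formula gives, for any proper squarefree monomial ideal $J\subset R$, the identity $\reg(R/J)=n-1-\depth(R/J^\vee)$. Applied to both sides of (ii), this converts (ii) into a regularity inequality about the Alexander duals that, through Lemma~\ref{duality-formula}(ii) (which identifies $(I:f)^\vee$ with $I(\mathcal{C}^\vee\setminus V)$, where $V=\{x_{i_1},\ldots,x_{i_k}\}$), is exhibited as the Alexander dual of (iv); thus (ii) and (iv) are Terai duals of one another.

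Finally, for (iii), iterating Lemma~\ref{duality-formula}(iii) $k$ times gives
$$(x_{i_1},\ldots,x_{i_k},I)^\vee \;=\; f\cdot I(\mathcal{C}\setminus V)^\vee.$$
Because $f$ involves only variables in $V$ while every generator of $I(\mathcal{C}\setminus V)^\vee$ avoids $V$, multiplication by $f$ is a non-zero divisor map and therefore $fI(\mathcal{C}\setminus V)^\vee\cong I(\mathcal{C}\setminus V)^\vee[-k]$, so $\reg(R/f\cdot I(\mathcal{C}\setminus V)^\vee)=\reg(R/I(\mathcal{C}\setminus V)^\vee)+k$. Using once more the identity $\reg(R/J)=n-1-\depth(R/J^\vee)$ together with $I(\mathcal{C}\setminus V)^\vee=(I^\vee:f)$ (Lemma~\ref{duality-formula}(ii) applied to $\mathcal{C}^\vee$), the inequality (iii) translates into the regularity inequality (iv) applied to the dual ideal $I^\vee$, closing the circle. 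The main obstacle is precisely this Alexander duality bookkeeping: one must carefully track which side of the dual each condition lives on and account for the degree shifts produced by multiplication by $f$, since these shifts are exactly what generate the ``$-1$'', ``$k-1$'', and ``$-k$'' corrections appearing in (i), (v), and (iii).
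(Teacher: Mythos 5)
Your outline shares the paper's skeleton --- the short exact sequence $0\to R/(I\colon f)[-k]\to R/I\to R/(I,f)\to 0$ with Lemmas~\ref{depth-lemma-duality} and \ref{reg-lemma-special-k} giving (i)$\Leftrightarrow$(ii) and (iv)$\Leftrightarrow$(v), and Terai duality plus Lemma~\ref{duality-formula} for the rest --- and your treatment of (iii) is essentially right: dualizing (iii) via $\reg(R/J)=n-1-\depth(R/J^\vee)$ and the identity $(V,I)^\vee=f\,I(\mathcal{C}\setminus V)^\vee$ does land on (iv) for $I^\vee$. But there are two genuine gaps. First, your bridge ``(ii) and (iv) are Terai duals of one another'' is false. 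Under Alexander duality a colon does not go to a colon: $(I\colon f)^\vee=I(\mathcal{C}^\vee\setminus V)$ is a \emph{deletion}, so dualizing (ii) produces $\reg(R/(V,I^\vee))\leq\reg(R/I^\vee)$, a deletion-type statement, and \emph{not} $\reg(R/(I^\vee\colon f))\leq\reg(R/I^\vee)$; indeed $(I^\vee\colon f)=I(\mathcal{C}\setminus V)^\vee$ and $I(\mathcal{C}^\vee\setminus V)$ are different ideals in general (take $\mathcal{C}$ a single edge $\{x_1,x_2\}$ and $f=x_1$). The correct Terai pairing is (iii)$\leftrightarrow$(iv) --- which your own last paragraph demonstrates --- so with the (ii)$\leftrightarrow$(iv) link removed your chain splits into the two disconnected clusters $\{$(i),(ii)$\}$ and $\{$(iii),(iv),(v)$\}$. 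The paper joins them by the elementary implication (i)$\Rightarrow$(iii), proved by induction on $k$; you need that step (or an equivalent one, e.g.\ iterating (v) with $k=1$ to recover the deletion statement).

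Second, the proposition asserts that the conditions \emph{hold}, not merely that they are equivalent, and your proposal never anchors the truth of any one of them. The paper first establishes (ii) unconditionally via Lemma~\ref{Caviglia-et-al-square-free} (the Stanley--Reisner skeleton argument) and only then runs the cycle of equivalences. Without citing that lemma or giving some other proof that one condition holds, the argument is incomplete even after the equivalences are repaired.
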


\begin{proof} 
By Lemma~\ref{Caviglia-et-al-square-free}, condition (ii)
holds for any squarefree monomial ideal $I$ and for any squarefree
monomial $f$. Thus it suffices to show that (i) and (ii) are equivalent and that (i) and (iii)--(v) are equivalent
conditions. Since $I$ is squarefree, 
there is a clutter $\mathcal{C}$ such that $I=I(\mathcal{C})$. 

(i) $\Leftrightarrow$ (ii):  This follows from applying
Lemma~\ref{depth-lemma-duality} to the short exact sequence
\begin{equation}\label{sep15-17-1}
0\longrightarrow R/(I\colon
f)[-k]\stackrel{f}{\longrightarrow}R/I\longrightarrow
R/(I,f)\longrightarrow 0.
\end{equation}

(i) $\Rightarrow$ (iii): This follows directly by induction on $k$.

(iii) $\Rightarrow$ (iv): As (iii) holds for squarefree monomials, 
applying (iii) to $I(\mathcal{C}^\vee)$, we get  
$$ 
k+\depth(R/(x_{i_1},\ldots,x_{i_k},I(\mathcal{C}^\vee)))\geq
\depth(R/I(\mathcal{C}^\vee)).
$$
\quad Therefore, setting $V=\{x_{i_1},\ldots,x_{i_k}\}$ and
$X=\{x_1,\ldots,x_n\}$, we get 
\begin{eqnarray*}
\depth(R/I(\mathcal{C}^\vee\setminus V))&=&k+\depth(K[X\setminus
V]/I(\mathcal{C}^\vee\setminus V))\\
&=&k+\depth(R/(V,I(\mathcal{C}^\vee)))
\geq\depth(R/I(\mathcal{C}^\vee)),
\end{eqnarray*}
that is, $\depth(R/I(\mathcal{C}^\vee\setminus V))\geq
\depth(R/I(\mathcal{C}^\vee))$, where
$I(\mathcal{C}^\vee)=I(\mathcal{C})^\vee$. Hence, applying 
the Auslander--Buchsbaum formula \cite[Theorem~3.5.13]{monalg-rev} to
both sides of this inequality and then using Terai's formula of
Theorem~\ref{terai-duality-theorem}, we get 
$$
\reg(R/I(\mathcal{C}))\geq\reg(R/I(\mathcal{C}^\vee\setminus V)^\vee).
$$
\quad By Lemma~\ref{duality-formula}(ii) one has 
$I(\mathcal{C}^\vee\setminus V)=(I(\mathcal{C})\colon f)^\vee$. Thus,
by duality, $I(\mathcal{C}^\vee\setminus V)^\vee
=(I(\mathcal{C})\colon f)$, and the required inequality follows.  

(iv) $\Rightarrow$ (iii): As (iv) holds for squarefree monomials, 
applying (iv) to $I(\mathcal{C}^\vee)$, we get  
$$
\reg(R/I(\mathcal{C}^\vee))\geq\reg(R/(I(\mathcal{C}^\vee)\colon f)).
$$
\quad Therefore, applying Terai's formula of
Theorem~\ref{terai-duality-theorem} and
Lemma~\ref{duality-formula}(i), we get 
$$
{\rm pd}_R(R/I(\mathcal{C}))\geq {\rm pd}_R(R/I(\mathcal{C}\setminus
V)).
$$
\quad Hence, applying 
the Auslander--Buchsbaum formula \cite[Theorem~3.5.13]{monalg-rev} to
both sides of this inequality and using depth properties, we obtain  
\begin{eqnarray*}
k+\depth(R/(V,I(\mathcal{C})))&=&k+\depth(K[X\setminus V]/I(\mathcal{C}\setminus
V))\\
&=&\depth(R/I(\mathcal{C}\setminus V))\geq\depth(R/I(\mathcal{C})).
\end{eqnarray*}

(iv) $\Leftrightarrow$ (v):  Since $\reg((R/(I\colon
f))[-k])=k+\reg(R/(I\colon f))$, the equivalence between (iv) and (v)
follows applying Lemma~\ref{reg-lemma-special-k} to the exact sequence of 
Eq.~(\ref{sep15-17-1}). 
\end{proof}

In {\cite[Corollary~3.3]{Caviglia-et-al}} it is shown that condition
(vii) below holds (cf. \cite[Lemma~2.10]{Dao-Huneke-Schweig}). 

\begin{remark}\label{sep17-17} (A) Conditions {\rm (i)--(v)} are
equivalent to 

{\rm (vi)} $\depth(R/I)=\depth(R/(I\colon f))$ or $\depth(R/I)=
\depth(R/(f,I))$.

\quad (B) For $k=\deg(f)=1$ conditions {\rm (i)--(vi)} are equivalent to:

{\rm (vii)} $\reg(R/I)=\reg(R/(I\colon f))+1$ or
$\reg(R/I)=\reg(R/(f,I))$. 

This follows applying Lemmas~\ref{depth-lemma-duality} and
\ref{reg-lemma-special} to the exact sequence given in Eq.~(\ref{sep15-17-1}). 
\end{remark}

\paragraph{\bf Depth and regularity via polarization} 
In what follows we will use the polarization technique
due to Fr\"oberg that we briefly recall now (see \cite[p.~203]{monalg-rev} and the references
therein). Note that alternate labelings of polarizations and partial
polarizations exist in the literature (see, for example,
\cite{faridi-lisbon,Herzog-Hibi-book,peeva}); 
however, the notation used here will prove beneficial
in Section~\ref{depths}.   

Let $J\subset R$ be a monomial ideal minimally generated by 
$G(J)=\{g_1,\ldots,g_s\}$. We set 
$\gamma_i$ equal to $\max\{\deg_{x_i}(g)\vert\, g\in G(J)\}$. 
To polarize $J$ we use the set of new variables
$$
X_J=\cup_{i=1}^n\{x_{i,2},\ldots,
x_{i,\gamma_i}\},
$$
where $\{x_{i,2},\ldots,x_{i,\gamma_i}\}$ is empty if
$\gamma_i=0$ or $\gamma_i=1$. It
is convenient to identify the variable $x_i$ with $x_{i,1}$ for all
$i$. Recall that a power $x_i^{c_i}$ of a variable $x_i$, $1\leq
c_i\leq\gamma_i$, polarizes 
to $(x_i^{c_i})^{\rm pol}=x_i$ if $\gamma_i=1$, to
$(x_i^{c_i})^{\rm pol}=x_{i,2}\cdots x_{i,c_i+1}$ if $c_i<\gamma_i$, and
to $(x_i^{c_i})^{\rm pol}=x_{i,2}\cdots x_{i,\gamma_i}x_i$ if $c_i=\gamma_i$.
This induces a polarization $g_i^{\rm pol}$ of $g_i$ for $i=1,\ldots,s$.  
The {\it full polarization\/} $J^{\rm pol}$ of $J$ is the ideal
of $R[X_J]$ generated by $g_1^{\rm pol},\ldots,g_s^{\rm pol}$. The next lemma
is well known.

\begin{lemma}\label{sep10-17} Let $J$ be a monomial ideal of $R$. Then
\begin{enumerate}
\item[(a)] {\rm (Fr\"oberg \cite{Fro1})}
$\depth(R[X_J]/J^{\rm pol})=|X_J|+\depth(R/J)=\depth(R[X_J]/J)$. 
\item[(b)] ${\rm pd}(R/J)={\rm pd}(R[X_J]/J^{\rm pol})$.
\item[(c)] ${\rm pd}(R/J)={\rm reg}(R[X_J]/(J^{\rm pol})^\vee)+1$.
\item[(d)] {\rm \cite[Corollary 1.6.3]{Herzog-Hibi-book}}
$\reg(R/J)=\reg(R[X_J]/J^{\rm pol})$.
\end{enumerate}
\end{lemma}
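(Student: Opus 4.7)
The plan is to handle the four parts in sequence, treating (a) as the foundational polarization fact of Fröberg, deducing (b) from (a) via the Auslander--Buchsbaum formula, deducing (c) from (b) via Terai's duality (Theorem~\ref{terai-duality-theorem}) applied to the Alexander dual of $J^{\rm pol}$, and quoting (d) from the reference.

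For (a) I would carry out the standard verification. Set $\theta_{i,j} := x_{i,j}-x_i$ for $1\leq i\leq n$ and $2\leq j\leq \gamma_i$; these $|X_J|$ linear forms generate the kernel of the $R$-algebra map $R[X_J]\to R$ sending every $x_{i,j}$ to $x_i$. By construction this map sends each $g_k^{\rm pol}$ back to $g_k$, so modding $R[X_J]/J^{\rm pol}$ out by the $\theta_{i,j}$ recovers $R/J$. To see that the $\theta_{i,j}$ form a regular sequence on $R[X_J]/J^{\rm pol}$, I would argue one variable at a time, exploiting that in each minimal generator of $J^{\rm pol}$ every variable appears with exponent at most one and that the polarized families $\{x_{i,1},\ldots,x_{i,\gamma_i}\}$ interact with the generators in a predictable chain. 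Once the regular sequence is in place, $\depth(R[X_J]/J^{\rm pol})=|X_J|+\depth(R/J)$. The second equality in (a) follows from the isomorphism $R[X_J]/J\cong (R/J)[X_J]$, since a polynomial extension in $|X_J|$ variables raises the depth by exactly $|X_J|$.

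For (b), the Auslander--Buchsbaum formula applied in the two ambient polynomial rings gives
$$
{\rm pd}(R/J)=n-\depth(R/J),\qquad {\rm pd}(R[X_J]/J^{\rm pol})=(n+|X_J|)-\depth(R[X_J]/J^{\rm pol}),
$$
and substituting (a) shows the two projective dimensions agree. For (c), note that $J^{\rm pol}$ is a squarefree monomial ideal in $R[X_J]$, hence so is $(J^{\rm pol})^\vee$, and the Alexander-dual involution gives $((J^{\rm pol})^\vee)^\vee=J^{\rm pol}$. Applying Terai's formula to $(J^{\rm pol})^\vee$ yields
$$
1+\reg(R[X_J]/(J^{\rm pol})^\vee)=\reg((J^{\rm pol})^\vee)={\rm pd}(R[X_J]/J^{\rm pol}),
$$
and combining with (b) gives the identity of (c). For (d) I would simply quote \cite[Corollary~1.6.3]{Herzog-Hibi-book}.

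The only genuinely non-routine ingredient is the regular-sequence check in (a); once that is available, parts (b)--(d) are bookkeeping consequences of Auslander--Buchsbaum and Terai's theorem, with the Alexander-dual involution doing the decisive work in (c). I do not anticipate a substantial obstacle beyond keeping the indexing of the polarized variables under control.
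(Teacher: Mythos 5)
Your proposal is correct and follows essentially the same route as the paper: the paper cites Fr\"oberg for (a) and Herzog--Hibi for (d), then obtains (b) from (a) via Auslander--Buchsbaum and (c) from (b) via Terai's duality, exactly as you do. Your additional sketch of (a) via the regular sequence $x_{i,j}-x_i$ is the standard argument behind the cited result and is fine.
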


\begin{proof} Part (b) follows applying the Auslander--Buchsbaum
formula \cite[Theorem~3.5.13]{monalg-rev} to part (a). Part (c)
follows from Theorem~\ref{terai-duality-theorem} and part (b).
\end{proof}

Let $I\subset R$ be a monomial ideal and let $f$ be a monomial. Using
polarization, one can extend Proposition~\ref{square-free-case} 
and Remark~\ref{sep17-17} to general monomial ideals.
The following result will be needed when relating the depth and
the regularity of a monomial 
ring $R/I$ with those of the ring $R[X_L]/I^{\rm pol}$, where $L$ is the
ideal $(f,I)$
and $I^{\rm pol}$ is the polarization of $I$ with respect to 
$R[X_L]$ (cf. Lemma~\ref{sep10-17}).  

\begin{proposition}\label{pepe-morey-vila-vivares} 
Let $I\subset R$ be a monomial ideal and
let $f$ be a monomial.  
If $L=(f,I)$ and $X_L$ is the set of new 
variables that are needed to polarize $L$, then
\begin{enumerate}
\item[(i)] $\depth(R[X_L]/L^{\rm pol})
-\depth(R[X_L]/(f_1^{\rm pol},\ldots,f_r^{\rm pol}))=\depth(R/L)-\depth(R/I)$,
\item[(ii)] $\reg(R[X_L]/L^{\rm pol})=\reg(R/L)$ and 
$\reg(R[X_L]/(f_1^{\rm pol},\ldots,f_r^{\rm pol}))=\reg(R/I)$,
\end{enumerate}
where $G(I)=\{f_1,\ldots,f_r\}$, and $f_i^{\rm pol}$ is the polarization of
$f_i$ in $R[X_L]$. 
\end{proposition}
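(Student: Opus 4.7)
The plan is to reduce both sides of (i) and (ii) to the standard polarization $I^{\rm pol}$ of $I$ inside $R[X_I]$, where Lemma~\ref{sep10-17} applies directly. For the ``$L$-side'' this is immediate: since $X_L$ is precisely the variable set used to polarize $L$, parts (a) and (d) of Lemma~\ref{sep10-17} yield $\depth(R[X_L]/L^{\rm pol}) = |X_L| + \depth(R/L)$ and $\reg(R[X_L]/L^{\rm pol}) = \reg(R/L)$, which already gives half of (i) and the first equality of (ii).

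The delicate side is the ideal $J := (f_1^{\rm pol}, \ldots, f_r^{\rm pol}) \subset R[X_L]$, whose generators are polarized using the larger $x_i$-degree thresholds coming from $L$ rather than those of $I$. Write $\gamma_i^I := \max\{\deg_{x_i}(g) : g \in G(I)\}$ and $\gamma_i^L$ analogously. Whenever $\gamma_i^L > \gamma_i^I$, the polarization rule gives $(x_i^{\gamma_i^I})^{\rm pol} = x_{i,2}\cdots x_{i,\gamma_i^I+1}$ inside $R[X_L]$, whereas in $R[X_I]$ the same power polarizes to $x_{i,2}\cdots x_{i,\gamma_i^I}\cdot x_i$. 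Thus $J$ has the combinatorial shape of a polarization of $I$ but uses a shifted set of ``top'' variables in the larger ambient ring.

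To correct this shift, I would introduce the $K$-algebra automorphism $\varphi$ of $R[X_L]$ that swaps $x_i \leftrightarrow x_{i,\gamma_i^I+1}$ for every $i$ with $\gamma_i^L > \gamma_i^I \geq 1$ and fixes every other variable. A case split on whether $\deg_{x_i}(f_k) < \gamma_i^I$ or $= \gamma_i^I$ (with $\gamma_i^I \in \{0, \gamma_i^L\}$ handled trivially, since then $\varphi$ acts as the identity on the $x_i$-block) shows that $\varphi$ transforms the $R[X_L]$-polarization of each $f_k$ into its $R[X_I]$-polarization; hence $\varphi(J) = I^{\rm pol} R[X_L]$. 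Since the generators of $I^{\rm pol}$ lie in $R[X_I] \subseteq R[X_L]$, this produces a $K$-algebra isomorphism $R[X_L]/J \cong (R[X_I]/I^{\rm pol})[Y]$ with $|Y| = |X_L| - |X_I|$. Adjoining polynomial variables raises depth by $|Y|$ and leaves regularity unchanged, so combining with Lemma~\ref{sep10-17}(a),(d) applied to $I$ yields $\depth(R[X_L]/J) = |X_L| + \depth(R/I)$ and $\reg(R[X_L]/J) = \reg(R/I)$. Subtracting these from the first paragraph's identities delivers (i) and completes (ii). The main obstacle is the combinatorial bookkeeping for $\varphi$: one must verify case by case that the swap is defined exactly on the variables where the two polarization rules disagree, and check that $\varphi(J)$ coincides on the nose with $I^{\rm pol} R[X_L]$ without any residual correction.
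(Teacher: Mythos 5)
Your proposal is correct and follows essentially the same route as the paper: both arguments identify $(f_1^{\rm pol},\ldots,f_r^{\rm pol})$ with the standard polarization $I^{\rm pol}$ via a relabeling of variables (the paper passes to an isomorphic ideal $I'$ in a ring $R'$ obtained by renaming $x_i\mapsto x_{i,\delta_i+1}$; you realize the same change of variables as an automorphism of $R[X_L]$), then count the $|X_L|-|X_{I}|$ extra polynomial variables and apply Fr\"oberg's lemma. The two packagings are interchangeable.
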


\begin{proof} (i): We may assume $f$ is not in $I$, otherwise
there is nothing to prove. Let $L^{\rm pol}\subset R[X_L]$
be the full polarization of $L$. For use below we set 
$\delta_i=\max\{\deg_i(g)\vert\, g\in G(I)\}$ and 
$f=x_1^{a_1}\cdots x_n^{a_n}$. The set of variables of $R$ is denoted
by $X=\{x_1,\ldots,x_n\}$.

Subcase (i.a): $a_i>\delta_i$ for some $i$. Then
$G(L)=\{f,f_1,\ldots,f_r\}$. For simplicity of notation we 
assume there is an integer $k$ such that
$a_1>\delta_1,\ldots,a_k>\delta_k$  and $a_i\leq\delta_i$
for $i>k$. If $\delta_i=0$ for some $i>k$, then the variable $x_i$
does not occur in any element of $G(L)$ because $a_i=0$. Hence 
we can replace $R$ by $K[X\setminus\{x_i\}]$. Thus we may assume 
that $\delta_i\geq 1$ for $i>k$. To polarize $L$ we use the set of
variables
$$
X_L=(\cup_{i=1}^k\{x_{i,2},\ldots, x_{i,\delta_i},x_{i,\delta_i+1},\ldots,
x_{i,a_i}\})\cup(\cup_{i=k+1}^n\{x_{i,2},\ldots,
x_{i,\delta_i}\}),
$$
where $\{x_{i,2},\ldots,x_{i,c}\}$ is the empty set if $c=0$ or $c=1$. It
is convenient to identify $x_i$ with $x_{i,1}$ for all $i$. In this 
setting the monomial $x_i^{a_i}$ polarizes to 
$(x_i^{a_i})^{\rm pol}=x_{i,2}\cdots x_{i,a_i}x_i$ for $i=1,\ldots,k$ and
the monomial $x_i^{\delta_i}$ polarizes to 
$(x_i^{\delta_i})^{\rm pol}=x_{i,2}\cdots x_{i,\delta_i}x_i$ for $i>k$. 
Let $f^{\rm pol}$ and $f_i^{\rm pol}$ be the polarizations in $R[X_L]$ of $f$
and $f_i$ (see Example~\ref{sep12-17}). By Lemma~\ref{sep10-17} one has
\begin{equation}\label{sep9-17}
\depth(R[X_L]/L^{\rm pol})=|X_L|+\depth(R/L)=\sum_{i=1}^k(a_i-1)+
\sum_{i=k+1}^n(\delta_i-1)+\depth(R/L).
\end{equation}

Next we relate the depth of $R[X_L]/(f_1^{\rm pol},\ldots,f_r^{\rm pol}))$ to
the depth of $R/I$. For this consider the polynomial ring 
$R'=K[X']$, where
$X'=(X\setminus\{x_i\}_{i=1}^k)\cup\{x_{1,\delta_i+1}\}_{i=1}^k$, 
and let $f_i'$ be the polynomial of $R'$ obtained from $f_i$ by
replacing $x_i$ with $x_{i,\delta_i+1}$ for $i=1,\ldots,k$. If $I'$ 
is the ideal of $R'$ generated by $f_1',\ldots,f_r'$, then
$K[X]/I$ and $K[X']/I'$ are isomorphic and have the same depth.
By polarizing $f_i'$ with respect to 
$$
X_{I'}=\cup_{i=1}^n\{x_{i,2},\ldots,x_{i,\delta_i}\}
$$
we obtain that $(f_i')^{\rm pol}$ is equal to $f_i^{\rm pol}$, the
polarization of $f_i$ with respect to $X_L$. The 
full polarization of $I'$ is
$(I')^{\rm pol}=((f_1')^{\rm pol},\ldots,(f_r')^{\rm pol})$. Therefore, by
Lemma~\ref{sep10-17}, one has
\begin{eqnarray}
\depth(R[X_L]/(f_1^{\rm pol},\ldots,f_r^{\rm pol}))&=&
\depth(R[X_L]/((f_1')^{\rm pol},\ldots,(f_r')^{\rm pol})),\label{sep6-17}\\
\depth(R'[X_{I'}]/((f_1')^{\rm pol},\ldots,(f_r')^{\rm pol}))&=&|X_{I'}|+\depth(R'/I')=
|X_{I'}|+\depth(R/I).\label{sep7-17}
\end{eqnarray}

As $|X\cup X_L|=\sum_{i=1}^ka_i+\sum_{i=k+1}^n\delta_i$ and $|X'\cup
X_{I'}|=\sum_{i=1}^n\delta_i$, we get  
$$
|(X\cup X_L)\setminus(X'\cup X_{I'})|=\sum_{i=1}^k(a_i-\delta_i),
$$
that is, the number of variables of $R[X_L]$ that do not occur in
$R'[X_{I'}]$ is $\sum_{i=1}^k(a_i-\delta_i)$. 
Therefore from Eqs.~(\ref{sep6-17}) and (\ref{sep7-17}), and using that
$|X_{I'}|=\sum_{i=1}^n(\delta_i-1)$, we get
\begin{eqnarray}
\depth(R[X_L]/(f_1^{\rm pol},\ldots,f_r^{\rm pol}))&=&\sum_{i=1}^k(a_i-\delta_i)+
\depth(R'[X_{I'}]/((f_1')^{\rm pol},\ldots,(f_r')^{\rm pol}))\nonumber\\
&=&\sum_{i=1}^k(a_i-1)+\sum_{i=k+1}^n(\delta_i-1)+\depth(R/I).\label{sep5-17}
\end{eqnarray}

Using Eqs.~(\ref{sep9-17}) and (\ref{sep5-17}) the required equality
follows. 

Subcase (i.b): $a_i\leq \delta_i$ for all $i$. This case follows adapting the 
arguments of Subcase (i.a), noting that $k=0$ in this case.

(ii): To prove this part we keep the notation of part (i). 

Subcase (ii.a):  Assume that $a_i>\delta_i$ for some $i$. The first 
equality follows at once from Lemma~\ref{sep10-17}. 
As $R'[X_{I'}]$ is a subring of $R[X_L]$, the regularity of $(I')^{\rm pol}R'[X_{I'}]$
is equal to that of $(I')^{\rm pol}R[X_L]$. Hence, by Lemma~\ref{sep10-17}, we get
$$
\reg(R[X_L]/(I')^{\rm pol})=\reg(R'[X_{I'}]/(I')^{\rm pol})=\reg(R'/I')=\reg(R/I).
$$
\quad Subcase (ii.b): $a_i\leq \delta_i$ for all $i$. This case follows adapting the 
arguments of Subcase (ii.a).
\end{proof}

The following corollary extends Proposition~\ref{square-free-case}
and Remark~\ref{sep17-17} from squarefree monomial ideals to
arbitrary monomial ideals   
using polarization.  It will be used throughout the paper
(e.g., Lemma~\ref{morey-lemma}, Theorem~\ref{winter2017},
Proposition~\ref{very-well-covered}). 
This result is later extended using Gr\"obner
bases (Corollary~\ref{feb20-18}).

\begin{corollary}\label{Caviglia-et-al-lemma-alternate-proof} Let 
$I\subset R$ be a monomial ideal, let $f$ be a monomial of degree $k$,
and let $x_{i_1},\ldots,x_{i_k}$ be a set of distinct variables
of $R$. The following hold.
\begin{enumerate}
\item[(i)] $\depth(R/(f,I))\geq\depth(R/I)-1$.
\item[(ii)] {\cite[Theorem~3.1]{Caviglia-et-al}} $\depth(R/I)\leq\depth(R/(I\colon f))$.
\item[(iii)] $\depth(R/(x_{i_1},\ldots,x_{i_k},I))\geq \depth(R/I)-k$.
\item[(iv)] {\cite[Theorem~3.1]{Caviglia-et-al}}
$\reg(R/I)\geq\reg(R/(I\colon f))$.
\item[(v)] $\reg(R/(f,I))\leq\reg(R/I)+k-1$. 
\item[(vi)] $\depth(R/I)=\depth(R/(I\colon f))$ or $\depth(R/I)=
\depth(R/(f,I))$.
\item[(vii)] \cite{Caviglia-et-al,Dao-Huneke-Schweig} If $k=1$, 
then $\reg(R/I)=\reg(R/(I\colon f))+1$ or
$\reg(R/I)=\reg(R/(f,I))$.
\end{enumerate}
\end{corollary}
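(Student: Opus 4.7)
The plan is to reduce the general monomial case to the squarefree case established in Proposition~\ref{square-free-case}, then derive the remaining conditions from the short exact sequence lemmas already developed in the section. Set $L := (f, I)$ and let $L^{\rm pol} \subset R[X_L]$ be its full polarization. Since $L$ is generated by $f$ together with the generators $f_i \in G(I)$ and polarization acts generator by generator, $L^{\rm pol}$ is generated by the squarefree monomials $f^{\rm pol}, f_1^{\rm pol}, \ldots, f_r^{\rm pol}$ of $R[X_L]$, where $\deg(f^{\rm pol}) = \deg(f) = k$. Set $I' := (f_1^{\rm pol}, \ldots, f_r^{\rm pol}) \subset R[X_L]$; both $L^{\rm pol} = (f^{\rm pol}) + I'$ and $I'$ are squarefree monomial ideals of $R[X_L]$.

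To establish (i) and (v), I would apply Proposition~\ref{pepe-morey-vila-vivares} to obtain
$$\depth(R[X_L]/L^{\rm pol}) - \depth(R[X_L]/I') = \depth(R/L) - \depth(R/I)$$
together with $\reg(R[X_L]/L^{\rm pol}) = \reg(R/L)$ and $\reg(R[X_L]/I') = \reg(R/I)$. Proposition~\ref{square-free-case} applied to the squarefree pair $I'$ and $f^{\rm pol}$ in $R[X_L]$ then yields, from parts (i) and (v) of that proposition,
$$\depth(R[X_L]/L^{\rm pol}) \geq \depth(R[X_L]/I') - 1 \quad \text{and} \quad \reg(R[X_L]/L^{\rm pol}) \leq \reg(R[X_L]/I') + k - 1.$$
Transferring through the identities above produces (i) and (v) for the original monomial ideal $I$.

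The remaining conclusions follow from the short exact sequence
$$0 \longrightarrow R/(I \colon f)[-k] \stackrel{f}{\longrightarrow} R/I \longrightarrow R/(f, I) \longrightarrow 0,$$
combined with the general exact-sequence lemmas of the section, none of which require squarefreeness. Lemma~\ref{depth-lemma-duality} gives the equivalence of (i), (ii) and (vi), yielding (ii) and (vi); Lemma~\ref{reg-lemma-special-k} gives the equivalence of (iv) and (v), yielding (iv); and when $k = 1$, Lemma~\ref{reg-lemma-special} applied to the same sequence yields (vii). For (iii), iterate (i) along the variables $x_{i_1}, \ldots, x_{i_k}$, losing at most one unit of depth at each of the $k$ substitutions $J \mapsto (x_{i_j}, J)$. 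The main technical point lies in the polarization bookkeeping of the first step: the variable set $X_L$ must be large enough to accommodate both $f$ and the generators of $I$, so $I'$ is the polarization of $I$ viewed inside $R[X_L]$ rather than inside its own minimal ambient ring $R[X_I]$; the depth and regularity corrections that arise from this mismatch are precisely what Proposition~\ref{pepe-morey-vila-vivares} is built to handle.
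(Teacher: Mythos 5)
Your proposal is correct and follows essentially the same route as the paper: polarize $L=(f,I)$, apply Proposition~\ref{square-free-case} to the squarefree polarizations, transfer back via Proposition~\ref{pepe-morey-vila-vivares}, and then derive (ii), (iv), (vi), (vii) from the short exact sequence using Lemmas~\ref{depth-lemma-duality}, \ref{reg-lemma-special-k} and \ref{reg-lemma-special}, with (iii) obtained by iterating (i). Your closing remark about the mismatch between $X_L$ and the minimal polarizing variable set for $I$ is precisely the bookkeeping that Proposition~\ref{pepe-morey-vila-vivares} was designed to absorb, just as in the paper's argument.
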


\begin{proof} If $I$ and $f$ are squarefree, the result holds true. Indeed,
by Lemma~\ref{Caviglia-et-al-square-free}, one has 
the inequality $\depth(R/(I\colon f))\geq\depth(R/I)$. Then by
Proposition~\ref{square-free-case} and Remark~\ref{sep17-17} the
statements all hold.   
To show the general case we will use the polarization technique. 

(i) One
may assume that $f\notin I$.  
We set $G(I)=\{f_1,\ldots,f_r\}$ and 
$L=(f,I)$. Let $X_L$ be the set of new variables needed to polarize
$L$ and let $f^{\rm pol}$, $f_i^{\rm pol}$ be the polarizations in
$R[X_L]$ of $f$, $f_i$, respectively. As these polarizations are
squarefree, by Proposition~\ref{square-free-case} one has
\begin{eqnarray*}
\depth(R[X_L]/(f^{\rm pol},f_1^{\rm pol},\ldots,f_r^{\rm pol}))\geq
\depth(R[X_L]/(f_1^{\rm pol},\ldots,f_r^{\rm pol}))-1,
\end{eqnarray*}
where $L^{\rm pol}=(f^{\rm pol},f_1^{\rm pol},\ldots,f_r^{\rm pol})$. Hence, by 
Proposition~\ref{pepe-morey-vila-vivares}, $\depth(R/L)\geq\depth(R/I)-1$.

(ii): According to Lemma~\ref{depth-lemma-duality} parts (ii) and (i)
are equivalent.

(iii): It follows from part (i) using induction on $k$.

(iv)--(v): Setting $N=(R/(I\colon f))[-k]$, $M=R/I$ and $L=R/(I,f)$,
and noticing that $\reg(N)=k+\reg(R/(I\colon f))$, from 
Lemma~\ref{reg-lemma-special-k} it follows that (iv) and (v) 
are equivalent. Since $f^{\rm pol},f_1^{\rm pol},\ldots,f_r^{\rm pol}$ are
squarefree, by Proposition~\ref{square-free-case} one has
$$ 
\reg(R[X_L]/L^{\rm pol})-\reg(R[X_L]/(f_1^{\rm pol},\ldots,f_r^{\rm pol}))\leq
k-1.
$$
\quad Hence, by Proposition~\ref{pepe-morey-vila-vivares}, one 
has $\reg(R/L)-\reg(R/I)\leq
k-1$. Thus (v) and (iv) hold.

(vi): This condition is equivalent to (i). This follows applying
Lemma~\ref{depth-lemma-duality} to the exact sequence
\begin{equation*}
0\longrightarrow R/(I\colon
f)[-k]\stackrel{f}{\longrightarrow}R/I\longrightarrow
R/(I,f)\longrightarrow 0.
\end{equation*}

(vii): Recall that $\reg(R/(I\colon f))[-k]=k+\reg(R/(I\colon f))$. If
$k=1$, using Lemma~\ref{reg-lemma-special} it follows that conditions 
(vii) and (iv) are equivalent.
\end{proof}

\begin{corollary}\label{feb20-18} Let $I\subset R$ be
a monomial ideal and let $f$ be a homogeneous polynomial of degree
$k$. If there exists a monomial order $\prec$ on $R$ such that ${\rm
in}_\prec(I,f)=I+({\rm in}_\prec(f))$, then  
\begin{itemize}
\item[\rm(a)] $\depth(R/(I\colon f))\geq\depth(R/I)$,
\item[\rm(b)] $\reg(R/(I,f))\leq\reg(R/I)+k-1$, and  
\item[\rm(c)] $\reg(R/I)\geq\reg(R/(I\colon f))$.
\end{itemize}
\end{corollary}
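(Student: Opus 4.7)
The plan is to reduce to the purely monomial situation by exploiting the given Gr\"obner-theoretic hypothesis, and then apply Corollary~\ref{Caviglia-et-al-lemma-alternate-proof} together with Lemmas~\ref{depth-lemma-duality} and~\ref{reg-lemma-special-k} to the short exact sequence
$$0\longrightarrow R/(I\colon f)[-k]\stackrel{f}{\longrightarrow}R/I\longrightarrow R/(I,f)\longrightarrow 0.$$
Set $J={\rm in}_\prec(I,f)=I+({\rm in}_\prec(f))$. Because $I$ is a monomial ideal and ${\rm in}_\prec(f)$ is a single monomial of degree $k$, $J$ is a monomial ideal obtained from $I$ by adjoining one monomial of degree $k$.

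The first step is to invoke the standard Gr\"obner deformation comparison: for any graded ideal $H\subset R$ and any monomial order $\prec$, the graded Betti numbers satisfy $\beta_{ij}(R/H)\leq\beta_{ij}(R/{\rm in}_\prec(H))$ (see, e.g., \cite[Theorem~3.3.4]{Herzog-Hibi-book}). By the Auslander--Buchsbaum formula this implies $\depth(R/H)\geq\depth(R/{\rm in}_\prec(H))$, and by the definition of regularity $\reg(R/H)\leq\reg(R/{\rm in}_\prec(H))$. Applying this to $H=(I,f)$ gives
$$\depth(R/(I,f))\geq\depth(R/J)\quad\mbox{and}\quad\reg(R/(I,f))\leq\reg(R/J).$$

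Next I apply Corollary~\ref{Caviglia-et-al-lemma-alternate-proof} to the monomial ideal $I$ and the monomial ${\rm in}_\prec(f)$: part~(i) gives $\depth(R/J)\geq\depth(R/I)-1$ and part~(v) gives $\reg(R/J)\leq\reg(R/I)+k-1$. Combining these with the inequalities above,
$$\depth(R/(I,f))\geq\depth(R/I)-1\quad\mbox{and}\quad\reg(R/(I,f))\leq\reg(R/I)+k-1,$$
the second of which is exactly part~(b) of the corollary.

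Finally, I transfer these inequalities back across the short exact sequence. With $N=R/(I\colon f)[-k]$, $M=R/I$, $L=R/(I,f)$, the displayed depth inequality is condition~(c) of Lemma~\ref{depth-lemma-duality}; its equivalent condition~(a) is precisely $\depth(R/(I\colon f))\geq\depth(R/I)$, proving~(a). For~(c), Lemma~\ref{reg-lemma-special-k} applied with the same $N,M,L$ and integer $k$ has condition~(b) given by $\reg(R/(I,f))\leq\reg(R/I)+k-1$, which has just been established; its equivalent condition~(a) reads $\reg(N)\leq\reg(M)+k$, that is $\reg(R/(I\colon f))+k\leq\reg(R/I)+k$, which is~(c). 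The only ingredient outside the previous results of this section is the Gr\"obner-deformation comparison of Betti numbers, so that is where the nontrivial content sits; once it is in hand the rest is a formal chase through the exact sequence.
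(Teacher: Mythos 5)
Your proposal is correct and follows essentially the same route as the paper: both rest on the Gr\"obner deformation inequalities $\depth(R/(I,f))\geq\depth(R/{\rm in}_\prec(I,f))$ and $\reg(R/(I,f))\leq\reg(R/{\rm in}_\prec(I,f))$ from \cite[Theorem~3.3.4]{Herzog-Hibi-book}, combined with parts (i) and (v) of Corollary~\ref{Caviglia-et-al-lemma-alternate-proof} applied to $I+({\rm in}_\prec(f))$, and then a transfer across the short exact sequence. The only difference is presentational: you invoke the packaged equivalences of Lemmas~\ref{depth-lemma-duality} and~\ref{reg-lemma-special-k} to get (a) and (c) directly, whereas the paper argues (a) and (c) by contradiction using the depth lemma and \cite[Corollary~20.19]{Eisen}, which amounts to the same computation.
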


\begin{proof} (a): We proceed by contradiction assuming that
$\depth(R/I)>\depth(R/(I\colon f))$. From the exact sequence 
\begin{equation*}
0\longrightarrow R/(I\colon
f)[-k]\stackrel{f}{\longrightarrow}R/I\longrightarrow
R/(I,f)\longrightarrow 0,
\end{equation*}
using the depth lemma \cite[Lemma~2.3.9]{monalg-rev} and the fact 
that the depth of $R/(I,f)$ is greater than or equal to 
the depth of $R/{\rm in}_\prec(I,f)$
\cite[Theorem~3.3.4(d)]{Herzog-Hibi-book}, we get  
$$
\depth(R/(I\colon f))=\depth(R/(I,f))+1\geq\depth(R/(I+{\rm
in}_\prec(f)))+1.
$$
\quad 
By Corollary~\ref{Caviglia-et-al-lemma-alternate-proof}(i), we have $\depth(R/(I+{\rm
in}_\prec(f)))\geq \depth(R/I)-1$.
Hence we obtain
$\depth(R/(I\colon f))\geq\depth(R/I)$, a contradiction.

(b): Using that the regularity of $R/(I,f)$ is less than or equal to 
the regularity of $R/{\rm in}_\prec(I,f)$
\cite[Theorem~3.3.4(c)]{Herzog-Hibi-book} and
Corollary~\ref{Caviglia-et-al-lemma-alternate-proof}(v), we get  
$$
\reg(R/(I,f))\leq\reg(R/{\rm in}_\prec(I,f))=\reg(R/(I+{\rm
in}_\prec(f)))\leq \reg(R/I)+k-1.
$$
\quad (c): Setting $N=R/(I\colon f)[-k]$, $M=R/I$, and $L=R/(I,f)$, we
proceed by contradiction assuming $\reg(R/(I\colon f))>\reg(R/I)$,
that is, $\reg(N)\geq\reg(M)+k+1$. On the other hand, by part (b), one
has $\reg(L)\leq\reg(N)-2$. According to \cite[Corollary~20.19]{Eisen}(a), one
has either $\reg(N)\leq\reg(M)$ or $\reg(N)\leq\reg(L)+1$, a
contradiction. 
\end{proof}

The next example illustrates the polarizations used in the proof 
of Proposition~\ref{pepe-morey-vila-vivares}. For convenience we use
the notation of that proof. 

\begin{example}\label{sep12-17} Let $f=x_1^3x_2^3$, $f_1=x_1^2x_3$,
$f_2=x_1x_3^2$, $f_3=x_2^2x_3$ be monomials in the polynomial ring 
$R=K[x_1,x_2,x_3]$ and set $I=(f_1,f_2,f_3)$ and $L=(f,I)$. Setting 
$$
f^{\rm pol}=x_{1,2}x_{1,3}x_1x_{2,2}x_{2,3}x_2,\, f_1^{\rm pol}=x_{1,2}x_{1,3}x_{3,2},\, 
f_2^{\rm pol}=x_{1,2}x_{3,2}x_3,\, f_3^{\rm pol}=x_{2,2}x_{2,3}x_{3,2},
$$
and $X_L=\{x_{1,2},x_{1,3}\}\cup\{x_{2,2},x_{2,3}\}\cup\{x_{3,2}\}$, 
the full polarization of $L$ is   
$$
L^{\rm pol}=(f^{\rm pol},\,f_1^{\rm pol},\,f_2^{\rm pol},f_3^{\rm pol})\subset R[X_L].
$$
\quad Making the change of variables $x_1\rightarrow x_{1,3}$,
$x_2\rightarrow x_{2,3}$ in $I$ and setting 
$$
f_1'=x_{1,3}^2x_3,\, f_2'=x_{1,3}x_3^2,\, f_3'=x_{2,3}^2x_3,\, 
I'=(f_1',\,f_2',\,f_3'),
$$
$X_{I'}=\{x_{1,2}\}\cup\{x_{2,2}\}\cup\{x_{3,2}\}$,
$R'=K[x_{1,3},x_{2,3},x_3]$, the full
polarization of $I'$ is
$$
(I')^{\rm pol}=((f_1')^{\rm pol},(f_2')^{\rm pol},(f_3')^{\rm pol})\subset
R'[X_{I'}],
$$ 
where $(f_1')^{\rm pol}=x_{1,2}x_{1,3}x_{3,2},
\,(f_2')^{\rm pol}=x_{1,2}x_{3,2}x_3,\,
(f_3')^{\rm pol}=x_{2,2}x_{2,3}x_{3,2}$. Thus $f_i^{\rm pol}$ is equal to 
$(f_i')^{\rm pol}$ 
for $i=1,2,3$. Setting $X_I=\{x_{1,2},x_{2,2},x_{3,2}\}$ the full
polarization of $I$ is generated by the monomials 
$x_{1,2}x_1x_{3,2},\,x_{1,2}x_{3,2}x_3,\, x_{2,2}x_2x_{3,2}$.
\end{example}

\section{Depth and regularity locally at each variable}\label{depths}

In this section we use polarization to study the behavior of the depth and regularity of
a monomial ideal locally at each variable when lowering the top degree.  

Let $R=K[x_1,\ldots,x_n]$ be a polynomial ring over a field $K$, 
let $I$ be a monomial ideal of $R$ and let $x_i$ be a fixed variable that occurs 
in $G(I)$. Given a monomial
$x^a=x_1^{a_1}\cdots x_n^{a_n}$, we set $\deg_{x_i}(x^a)=a_i$. Consider the
integer 
$$
q:=\max\{\deg_{x_i}(x^a)\vert\, x^a\in G(I)\}, 
$$
and the corresponding set $\mathcal{B}_i:=\{x^a\vert\,
\deg_{x_i}(x^a)=q\}\cap G(I)$. That is, $\mathcal{B}_i$ is the set of all monomial
of $G(I)$  of highest degree in $x_i$. Setting 
$$
\mathcal{A}_i:=\{x^a\vert\, \deg_{x_i}(x^a)<q\}\cap
G(I)=G(I)\setminus\mathcal{B}_i,
$$
$p:=\max\{\deg_{x_i}(x^a)\vert\, x^a\in\mathcal{A}_i\}$ and 
$L:=(\{x^a/x_i\vert\,x^a\in\mathcal{B}_i\}\cup\mathcal{A}_i\})$, we are
interested in comparing the depth (resp. regularity) of $R/I$ with the
depth (resp. regularity) of $R/L$. 

One of the main results of this section shows that the depth is
locally non-decreasing at each variable $x_i$ when lowering the top degree:

\begin{theorem}\label{morey-vila-oaxaca-2017}
Let $I$ be a monomial ideal of $R$ and let $x_i$ be a variable. The following hold.
\begin{enumerate}
\item[(a)] If $p\geq 1$ and $q-p\geq 2$, then
$\depth(R/I)=\depth(R/L)$.  
\item[(b)] If $p\geq 0$ and $q-p=1$, then $\depth(R/L)\geq\depth(R/I)$. 
\item[(c)] If $p=0$ and $q\geq 2$, then 
$
\depth(R/I)=\depth(R/(\{x^a/x_i^{q-1}\vert\,x^a\in\mathcal{B}_i\}\cup\mathcal{A}_i\})).
$
\end{enumerate}
\end{theorem}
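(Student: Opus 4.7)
The plan is to reduce all three parts to the squarefree case via polarization and Lemma~\ref{Caviglia-et-al-square-free}. Let $S = R[X_I]$ and let $I^{\rm pol} \subset S$ be the full polarization of $I$. At position $i$ the polarization introduces the extra variables $x_{i,2},\ldots,x_{i,q}$; each $x^a \in \mathcal{B}_i$ polarizes so that its $x_i$-part is $x_i x_{i,2}\cdots x_{i,q}$, while each $x^a \in \mathcal{A}_i$ with $\deg_{x_i}(x^a) = c \leq p$ polarizes to have $x_i$-part $x_{i,2}\cdots x_{i,c+1}$. By Lemma~\ref{sep10-17} we have $\depth(S/I^{\rm pol}) = |X_I| + \depth(R/I)$, so depth comparisons upstairs translate directly to depth comparisons downstairs.

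\textbf{Parts (a) and (c).} Under the hypothesis of (a), the condition $q-p\geq 2$ ensures that $x_{i,q}$ divides exactly the $\mathcal{B}_i^{\rm pol}$ generators, and a direct computation identifies $(I^{\rm pol}:x_{i,q})$ with $L^{\rm pol}$ extended to $S$ (with $x_{i,q}$ as a free extra variable). Lemma~\ref{Caviglia-et-al-square-free} then yields $\depth(R/L) \geq \depth(R/I)$. For the equality, apply Lemma~\ref{depth-lemma-duality} to the short exact sequence
$$0 \to S/(I^{\rm pol}:x_{i,q})[-1] \xrightarrow{\,x_{i,q}\,} S/I^{\rm pol} \to S/(I^{\rm pol},x_{i,q}) \to 0,$$
which gives two alternatives: either $\depth(S/I^{\rm pol}) = \depth(S/(I^{\rm pol}:x_{i,q}))$ (done) or $\depth(S/I^{\rm pol}) = \depth(S/(I^{\rm pol},x_{i,q}))$. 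In the latter case, $(I^{\rm pol},x_{i,q})/(x_{i,q}) = (\mathcal{A}_i^{\rm pol})$ in $S/(x_{i,q})$, with $q-p-1\geq 1$ free extra polarization variables at position $i$, and the parallel analysis applied to $L^{\rm pol}$ (colon by $x_{i,q-1}$, which now plays the analogous top role for $L$) produces a matching value, forcing equality. Part (c) proceeds analogously using the colon by the product $m = x_{i,2}\cdots x_{i,q}$: because $p=0$ forces every $\mathcal{A}_i^{\rm pol}$ generator to avoid $x_{i,2},\ldots,x_{i,q}$, one checks that $(I^{\rm pol}:m)$ equals the $S$-extension of the polarization of the target ideal of (c). Iterating Lemma~\ref{Caviglia-et-al-square-free} along $x_{i,q},x_{i,q-1},\ldots,x_{i,2}$ gives one direction, and the symmetry of $I^{\rm pol}$ under permutations of $\{x_i,x_{i,2},\ldots,x_{i,q}\}$ (all $q$ variables appear in exactly the same $\mathcal{B}_i^{\rm pol}$ generators and to the first power) allows the Lemma~\ref{depth-lemma-duality} dichotomy to be resolved in favor of equality at each step.

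\textbf{Part (b).} Here $q - p = 1$ allows the $\mathcal{A}_i$ generators with $\deg_{x_i}(x^a) = p$ to polarize with $x_{i,q}$ in their support, so $(I^{\rm pol}:x_{i,q})$ generally differs from the $S$-extension of $L^{\rm pol}$: the ``top'' $\mathcal{A}_i$ generators in the colon lack the factor $x_i$ present in their counterparts in $L^{\rm pol}$. A realization of $L^{\rm pol}$ itself as a colon of $I^{\rm pol}$ by an appropriate monomial --- obtained by a variable-renaming step in the spirit of the proof of Proposition~\ref{pepe-morey-vila-vivares} --- allows Lemma~\ref{Caviglia-et-al-square-free} to deliver the asserted inequality $\depth(R/L) \geq \depth(R/I)$. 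Only the inequality, not equality, is claimed here, which simplifies this case.

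\textbf{Main obstacle.} The principal difficulty is the reverse inequality required for equality in parts (a) and (c), since Lemma~\ref{Caviglia-et-al-square-free} supplies only one direction. Resolving the Lemma~\ref{depth-lemma-duality} dichotomy in our favor relies on the structural observation that stripping $x_{i,q}$ (or $m$) from $I^{\rm pol}$ leaves essentially $(\mathcal{A}_i^{\rm pol})$ with many free extra polarization variables, whose depth must be matched against that of the analogous truncation of $L^{\rm pol}$; once this matching is established, the two alternatives in the depth dichotomy collapse to the same value and equality follows. Part (b) is comparatively routine, since it asserts only an inequality.
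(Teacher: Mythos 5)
Your framework (full polarization, coloning out the top polarization variable, and Lemma~\ref{Caviglia-et-al-square-free}) correctly delivers the inequality $\depth(R/L)\geq\depth(R/I)$ in all three parts, and your sketch of (b) is essentially the paper's argument. The genuine gap is the reverse inequality $\depth(R/I)\geq\depth(R/L)$ needed for the equalities in (a) and (c). Your plan is to resolve the Lemma~\ref{depth-lemma-duality} dichotomy by matching $\depth(S/(I^{\rm pol},x_{i,q}))$ against the analogous truncation of $L^{\rm pol}$, but this does not work as stated: $(I^{\rm pol},x_{i,q})$ equals $((\mathcal{A}_i^{\rm pol}),x_{i,q})$, whereas $(L^{\rm pol},x_{i,q-1})$ can kill additional $\mathcal{A}_i$-generators (namely those of $x_i$-degree $q-2$, whose polarizations contain $x_{i,q-1}$ when $p=q-2$), so the two ``sum'' ideals are different and their depths need not agree. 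Moreover, even with a perfect match, the dichotomy for $L^{\rm pol}$ could resolve on the colon branch, in which case you learn nothing about $\depth(S/(L^{\rm pol},x_{i,q-1}))$ and no contradiction is forced. The symmetry of $x_i,x_{i,2},\dots,x_{i,q}$ in $I^{\rm pol}$ likewise gives no leverage on which branch of the dichotomy holds.

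The missing idea is depth-invariance under the substitution $x_i\mapsto x_i^{d}$: by \cite[Lemmas~3.3 and 3.5]{lattice-dim1} (cf.\ Lemma~\ref{powers-and-resolutions} in this paper), rescaling one variable preserves all (non-graded) Betti numbers and hence the depth. The paper uses a \emph{partial} polarization of $x_i^q$ with respect to $x_{i,2},\dots,x_{i,q-1}$ only, so that the $\mathcal{B}_i$-generators polarize to $x_{i,2}\cdots x_{i,q-1}x_i^2f_j'$ while (because $q-p\geq 2$) the $\mathcal{A}_i$-generators polarize away from $x_i$ entirely; the substitution $x_i^2\to x_i$ then carries $I^{\rm pol}$ exactly onto $L^{\rm pol}$ and yields the equality in (a) in one stroke, and the substitution $x_i^q\to x_i$ does the same for (c) with no polarization at all. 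Without this tool (or an equivalent two-sided argument), your proof establishes only ``$\geq$'' in parts (a) and (c).
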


\begin{proof} (a): To simplify notation we set $i=1$. We may assume that
$G(I)=\{f_1,\ldots,f_r\}$, where $\{f_1,\ldots,f_m\}$ is the
set of all elements of $G(I)$ that contain $x_1^q$ and
$\{f_{m+1},\ldots,f_s\}$ is the set of all elements of $G(I)$ that 
contain some positive power $x_1^\ell$ of $x_1$ for some $1\leq \ell<q$. Making a
partial polarization of $x_1^q$ with respect to the new variables
$x_{1,2},\ldots,x_{1,q-1}$ \cite[p.~203]{monalg-rev}, gives that $f_j$ polarizes to 
$f_j^{\rm pol}=x_{1,2}\cdots x_{1,q-1}x_1^2f_j'$ for $j=1,\ldots,m$,
where $f_1',\ldots,f_m'$ are monomials that do not contain $x_1$ and
$f_j=x_1^qf_j'$ for $j=1,\ldots,m$.  
Hence, using that $q-p\geq 2$, one has the partial polarization
$$
I^{\rm pol}=(x_{1,2}\cdots x_{1,q-1}x_1^2f_1',\ldots,x_{1,2}\cdots
x_{1,q-1}x_1^2f_m',f_{m+1}^{\rm pol},\ldots,f_s^{\rm
pol},f_{s+1},\ldots,f_r),
$$
where $f_{m+1}^{\rm pol},\ldots,f_s^{\rm pol}$ do not contain $x_1$
and $I^{\rm pol}$ is an ideal of $R^{\rm
pol}=R[x_{1,2},\ldots,x_{1,q-1}]$. 
On the other hand, from the equality
$$
G(L)=\{f_1/x_1,\ldots,f_m/x_1,f_{m+1},\ldots,f_r\},
$$
one has the partial polarization
$$
L^{\rm pol}=(x_{1,2}\cdots x_{1,q-1}x_1f_1',\ldots,x_{1,2}\cdots
x_{1,q-1}x_1f_m',f_{m+1}^{\rm pol},\ldots,f_s^{\rm
pol},f_{s+1},\ldots,f_r).
$$

By making the substitution
$x_1^{2}\rightarrow x_1$ in
each element of $G(I^{\rm pol})$ this will not affect the depth of
$R^{\rm pol}/I^{\rm pol}$ (see \cite[Lemmas~3.3 and
3.5]{lattice-dim1}). Thus 
$$
q-2+\depth(R/I)=
\depth(R^{\rm pol}/I^{\rm pol})=\depth(R^{\rm pol}/L^{\rm
pol})=q-2+\depth(R/L),
$$
and consequently $\depth(R/I)=\depth(R/L)$.

(b): To simplify notation we set $i=1$. Assume $p=0$, then $q=1$. Note
that the ring $R/L$ is equal
to $R/(I\colon x_1)$. Hence, by
Corollary~\ref{Caviglia-et-al-lemma-alternate-proof},  
its depth is greater than or equal to 
$\depth(R/I)$. Thus we may assume that $p\geq 1$. 
We may also assume that
$G(I)=\{f_1,\ldots,f_r\}$, where $f_1,\ldots,f_m$ is the
set of all elements of $G(I)$ that contain $x_1^q$, and  
$f_{m+1},\ldots,f_t$ is the set of all elements of $G(I)$ that 
contain $x_1^{q-1}$ but not $x_1^{q}$, and $f_{t+1},\ldots,f_s$ is the
set of all elements of $G(I)$ that contain some power $x_1^\ell$, with
$1\leq\ell<q-1$, but not $x_1^{\ell+1}$. 
Let $R'$ be the polynomial ring 
$K[x_{1,q},x_2,\ldots,x_n]$, with $x_{1,q}$ a new variable, and let $L'$
be the ideal of $R'$ obtained from $L$ by making the change of
variable $x_1\rightarrow x_{1,q}$ in each element of $G(L)$. Clearly 
\begin{equation*}
\depth(R/L)=\depth(R'/L')=\depth(R'[x_1]/L')-1.
\end{equation*}

The partial polarization of $I$ with
respect to $x_1$ using the variables $x_{1,2},\ldots,x_{1,q}$ is given
by
\begin{eqnarray*}
I^{\rm pol}=(x_{1,2}\cdots x_{1,q}x_1f_1',\ldots,x_{1,2}\cdots
x_{1,q}x_1f_m',& &\\ x_{1,2}\cdots x_{1,q}f_{m+1}',\ldots,x_{1,2}\cdots
x_{1,q}f_{t}',& &\\
f_{t+1}^{\rm pol},\ldots,f_s^{\rm
pol},f_{s+1},\ldots,f_r),
\end{eqnarray*}
where $f_1',\ldots,f_t',f_{t+1}^{\rm pol},\ldots,f_s^{\rm
pol},f_{s+1},\ldots,f_r$ do not contain $x_1$
and $I^{\rm pol}$ is an ideal of the ring $R^{\rm
pol}=R[x_{1,2},\ldots,x_{1,q}]$. Therefore
\begin{eqnarray*}
(I^{\rm pol}\colon x_1)=(x_{1,2}\cdots x_{1,q}f_1',\ldots,x_{1,2}\cdots
x_{1,q}f_m',\ \ \ \ \ \ \ \ \ \ \ \ \ \ \ \ \ \ \ \ \ \ \ \ \ \ \ \ \ \ 
& &\\ x_{1,2}\cdots x_{1,q}f_{m+1}',\ldots,x_{1,2}\cdots
x_{1,q}f_{t}',f_{t+1}^{\rm pol},\ldots,f_s^{\rm
pol},f_{s+1},\ldots,f_r).
\end{eqnarray*}

The following is a generating set for $L'$, which is not necessarily minimal:
\begin{eqnarray*}
L'=(x_{1,q}^{q-1}f_1',\ldots,x_{1,q}^{q-1}f_m',
x_{1,q}^{q-1}f_{m+1}',\ldots,x_{1,q}^{q-1}f_{t}',& &\\
x_{1,q}^{a_{t+1}}f_{t+1}',\ldots,x_{1,q}^{a_s}f_s',f_{s+1},\ldots,f_r),
\end{eqnarray*}
where $1\leq a_i<q-1$ for $i=t+1,\ldots,s$. 
Hence, it is seen that, $(I^{\rm pol}\colon x_1)$ is equal to
$(L')^{\rm pol}$, 
the polarization of $L'$ with respect to the variable $x_{1,q}$ using
the variables $x_{1,2},\ldots,x_{1,q-1}$. Therefore, using
Lemma~\ref{Caviglia-et-al-square-free}, we get
\begin{eqnarray*}
(q-1)+\depth(R/L)&=&1+((q-2)+\depth(R'/L'))\\
&=&1+\depth((R')^{\rm
pol}/(L')^{\rm pol})= \depth((R'[x_1])^{\rm
pol}/(L')^{\rm pol})\\ 
&=&\depth(R^{\rm pol}/(L')^{\rm pol})=\depth(R^{\rm pol}/(I^{\rm pol}\colon x_1))\\
&\geq&
\depth(R^{\rm pol}/I^{\rm pol})=(q-1)+\depth(R/I).
\end{eqnarray*}
Thus $\depth(R/L)\geq\depth(R/I)$.

(c): It suffices to notice that by making the substitution
$x_i^{q}\rightarrow x_i$ in
each element of $G(I)$ this will not affect the depth of $R/I$ 
(see \cite[Lemmas~3.3 and 3.5]{lattice-dim1}).
\end{proof}

Let  $\mathcal{D}$ be a {\it vertex-weighted digraph\/}, that is, 
$\mathcal{D}$ consists of a finite set 
$V({\mathcal D})=\{x_1,\ldots,x_n\}$ of vertices, a prescribed collection
$E({\mathcal D})$ of ordered pairs of distinct points called {\it
edges\/} or {\it arrows}, and $\mathcal{D}$ is endowed with a function $d\colon 
V(\mathcal{D})\rightarrow\mathbb{N}_+$, where
$\mathbb{N}_+:=\{1,2,\ldots\}$. The weight $d(x_i)$ of $x_i$ is denoted simply by $d_i$. 
The {\it edge ideal\/} of $\mathcal{D}$, denoted 
$I(\mathcal{D})$, is the ideal of $R$ given by
$$
I(\mathcal{D}):=(x_ix_j^{d_j}\, \vert\, (x_i,x_j)\in E(\mathcal{D})).
$$

Edge ideals of vertex-weighted digraphs occur in the theory of
Reed-Muller-type codes as initial ideals of vanishing ideals of 
projective spaces over a finite field
\cite{Ha-Lin-Morey-Reyes-Vila,hilbert-min-dis,sorensen}.

\begin{corollary}{\cite[Corollary~6]{cm-oriented-trees}}\label{oriented-graphs}
Let $I=I(\mathcal{D})$ be the edge ideal of a
vertex-weighted digraph with vertices $x_1,\ldots,x_n$ 
and let $d_i$ be the weight of $x_i$. If $\mathcal{U}$ is the digraph
obtained from $\mathcal{D}$ by assigning weight $2$ to every vertex $x_i$
with $d_i\geq 2$, then $I$ is Cohen--Macaulay if and only 
if $I(\mathcal{U})$ is Cohen--Macaulay. 
\end{corollary}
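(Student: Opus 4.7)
The strategy is to reduce both $I=I(\mathcal{D})$ and $I(\mathcal{U})$ to a common monomial ideal by iterating Theorem~\ref{morey-vila-oaxaca-2017} one variable at a time. Because $\mathcal{D}$ and $\mathcal{U}$ share the same underlying digraph, $\sqrt{I(\mathcal{D})}=\sqrt{I(\mathcal{U})}$ and hence $\dim(R/I)=\dim(R/I(\mathcal{U}))$, so it suffices to show $\depth(R/I)=\depth(R/I(\mathcal{U}))$.

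Fix a vertex $x_i$ with $d_i\geq 2$. The generators of $I$ that involve $x_i$ are the monomials $x_j x_i^{d_i}$ coming from arrows $(x_j,x_i)$ (of $x_i$-degree $d_i$) and the monomials $x_i x_k^{d_k}$ coming from arrows $(x_i,x_k)$ (of $x_i$-degree $1$). If $x_i$ has no incoming arrow, then $d_i$ does not appear in any generator and its value is irrelevant. Otherwise $q=d_i$, and two subcases arise. If $x_i$ is both a source and a target of $\mathcal{D}$, then $p=1$; as long as $d_i\geq 3$ we have $q-p\geq 2$, so Theorem~\ref{morey-vila-oaxaca-2017}(a) lowers $q$ by one while $p$ stays equal to $1$, and iterating reduces the weight of $x_i$ to exactly $2$ with the depth unchanged. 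If $x_i$ is only a target, then $p=0$ and $q=d_i\geq 2$, so a single application of Theorem~\ref{morey-vila-oaxaca-2017}(c) lowers the weight of $x_i$ from $d_i$ to $1$ without changing the depth. Each such reduction rewrites only the $x_i$-degrees of certain generators and leaves the arrow set intact, so reductions at distinct vertices commute.

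Performing these reductions at every vertex of $\mathcal{D}$ produces a weighted digraph $\mathcal{D}_0$ with the same arrow set as $\mathcal{D}$, in which every source-and-target vertex has weight $\min(d_i,2)$ and every only-target vertex has weight $1$; weights at non-target vertices do not affect the edge ideal and may be set arbitrarily. By construction $\depth(R/I)=\depth(R/I(\mathcal{D}_0))$. Applying the identical procedure to $\mathcal{U}$ yields the same ideal $I(\mathcal{D}_0)$: every source-and-target vertex of $\mathcal{U}$ already has weight $\min(d_i,2)$, so no application of (a) is triggered (indeed $q-p\leq 1$), and every only-target vertex of $\mathcal{U}$ of weight $2$ satisfies $p=0$, $q=2$, so (c) reduces it to weight $1$. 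Hence $\depth(R/I(\mathcal{U}))=\depth(R/I(\mathcal{D}_0))=\depth(R/I)$, and combined with the equality of dimensions we conclude that $I$ is Cohen--Macaulay if and only if $I(\mathcal{U})$ is. The principal point to check is that both reduction processes really terminate at the same $\mathcal{D}_0$; this rests on the fact that the arrow set is never altered by any reduction, so the source/target classification of each vertex remains constant throughout.
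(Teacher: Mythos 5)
Your proposal is correct and follows essentially the same route as the paper: iterate Theorem~\ref{morey-vila-oaxaca-2017} locally at each vertex to equate the depths, then use equality of heights (radicals) to transfer the Cohen--Macaulay property. You simply make explicit the case analysis (source-and-target vertices via part (a), target-only vertices via part (c), reducing both ideals to a common intermediate) that the paper's two-line proof leaves implicit.
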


\begin{proof} By applying Theorem~\ref{morey-vila-oaxaca-2017} to each
vertex $x_i$ of $\mathcal{D}$ of weight at least $3$, we obtain 
that the depth of $R/I(\mathcal{D})$ is equal to
the depth of $R/I(\mathcal{U})$. Since $I(\mathcal{D})$ and $I(\mathcal{U})$ have the
same height, then $I(\mathcal{D})$ is Cohen--Macaulay if and only if 
$I(\mathcal{U})$ is Cohen--Macaulay. 
\end{proof}

\begin{corollary}{\cite{herzog-takayama-terai}}\label{herzog-takayama-terai-theo}
If $I$ is a monomial ideal, then
$\depth(R/{\rm rad}(I))\geq\depth(R/I)$. In particular if $I$ is
Cohen--Macaulay, then ${\rm rad}(I)$ is Cohen--Macaulay.
\end{corollary}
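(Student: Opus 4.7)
The plan is to reduce the corollary to an iterated application of Theorem~\ref{morey-vila-oaxaca-2017}. For a monomial ideal $J$, set $\gamma_i(J):=\max\{\deg_{x_i}(g) \mid g\in G(J)\}$ and let $N(J):=\sum_{i=1}^n\max(0,\gamma_i(J)-1)$. I will proceed by induction on $N(I)$. If $N(I)=0$, every generator of $I$ is squarefree, hence $I=\mathrm{rad}(I)$ and the inequality is trivial. For the inductive step, pick a variable $x_i$ with $q:=\gamma_i(I)\geq 2$ and form the integer $p$, the sets $\mathcal{A}_i$, $\mathcal{B}_i$ and the ideal $L$ as in the setup preceding the theorem.

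The key verification is that in each case one obtains a monomial ideal with strictly smaller $N$, the same radical as $I$, and depth at least $\depth(R/I)$. If $p\geq 1$, either $q-p\geq 2$, in which case Theorem~\ref{morey-vila-oaxaca-2017}(a) gives $\depth(R/L)=\depth(R/I)$, or $q-p=1$, in which case part (b) gives $\depth(R/L)\geq \depth(R/I)$; in either case $\gamma_i(L)=q-1$ and $\gamma_j(L)\leq\gamma_j(I)$ for $j\neq i$, so $N(L)<N(I)$. If $p=0$ and $q\geq 2$, use part (c) to replace $I$ by $L':=(\{x^a/x_i^{q-1}\mid x^a\in\mathcal{B}_i\}\cup\mathcal{A}_i)$, with $\depth(R/L')=\depth(R/I)$, $\gamma_i(L')=1$, and $\gamma_j(L')\leq\gamma_j(I)$ for $j\neq i$. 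In all cases, since $q\geq 2$, each step preserves the support of every generator, so $\mathrm{rad}(L)=\mathrm{rad}(I)$ (respectively $\mathrm{rad}(L')=\mathrm{rad}(I)$). Applying the induction hypothesis to this reduced ideal and composing the inequalities gives $\depth(R/\mathrm{rad}(I))\geq\depth(R/I)$.

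For the final assertion, suppose $I$ is Cohen--Macaulay, so that $\depth(R/I)=\dim(R/I)$. Since $I$ and $\mathrm{rad}(I)$ share the same set of minimal primes, they have the same Krull dimension, and therefore
\[
\dim(R/\mathrm{rad}(I))\geq\depth(R/\mathrm{rad}(I))\geq\depth(R/I)=\dim(R/I)=\dim(R/\mathrm{rad}(I)),
\]
which forces equality throughout, so $\mathrm{rad}(I)$ is Cohen--Macaulay. There is no significant obstacle in this argument; the main item requiring care is the case analysis to confirm that parts (a), (b) and (c) of Theorem~\ref{morey-vila-oaxaca-2017} together cover every reduction needed (namely, every pair $(q,p)$ with $q\geq 2$), and that the radical is preserved at each step so that the inductive hypothesis applies to the correct ideal.
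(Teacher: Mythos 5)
Your proposal is correct and follows exactly the paper's route: the paper's proof is the one-line remark that the result ``follows by applying Theorem~\ref{morey-vila-oaxaca-2017} to every vertex $x_i$ as many times as necessary,'' and your induction on $N(I)$ with the case analysis over $(q,p)$ and the observation that each reduction preserves supports (hence the radical) is precisely the careful write-up of that iteration.
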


\begin{proof} It follows by applying
Theorem~\ref{morey-vila-oaxaca-2017} to every vertex 
$x_i$  as many times as necessary. 
\end{proof}

As a consequence if $I$ is squarefree, then 
$\depth(R/I)\geq \depth (R/I^k)$ for all $k\geq 1$.

\begin{remark} Let $L\subset R$ be a monomial ideal. If $x_i^k$ is in
$G(L)$ for some $k\geq 1$, $1\leq i\leq n$ and $L'$ is the ideal of
$R$ generated by all elements of $G(I)$ that do not contain $x_i$,
then $(L,x_i)=(L',x_i)$ and by a repeated application of
Theorem~\ref{morey-vila-oaxaca-2017} one has 
$$
\depth(R/L)\leq\depth(R/(L',x_i))=\depth(R/L')-1.
$$
\end{remark}

Before proving an analog of Theorem~\ref{morey-vila-oaxaca-2017} for
regularity, we first provide a basic fact regarding the effect of a
change of variables on the resolution of an ideal. 

\begin{lemma}\label{powers-and-resolutions}
Let $I$ be a homogeneous ideal of $R$, let $d_1$ be a positive
integer, and define $\phi\colon R\rightarrow
R$ by $\phi(x_1)=x_1^{d_1}$ and $\phi(x_i)=x_i$ for $2\leq i \leq n$.
If $\phi(I)$ is homogeneous, then a minimal resolution of $\phi(I)$
over $R$ can be obtained by applying $\phi$ to a minimal resolution
of $I$. Moreover, the $($non-graded$)$ Betti numbers of $I$ and $\phi(I)$
will be equal and $\reg(\phi(I)) \geq \reg I$.      
\end{lemma}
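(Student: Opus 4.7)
My plan is to realize $\phi$ as a flat base change that transports a graded minimal resolution of $I$ to one of $\phi(I)$. First, I observe that if $\phi(I)$ is homogeneous, then $I$ must be bihomogeneous with respect to the bigrading $\deg(x_1) = (1,1)$ and $\deg(x_j) = (0,1)$ for $j \ge 2$. Indeed, for a standard-homogeneous $f \in I$, its decomposition $f = \sum_a f_a$ by $x_1$-degree gives $\phi(f) = \sum_a \phi(f_a)$ with the $\phi(f_a)$ in distinct standard degrees, so homogeneity of $\phi(I)$ forces each $\phi(f_a) \in \phi(I)$, and injectivity of $\phi$ then yields $f_a \in I$. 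Equivalently, $I$ is homogeneous for the weight grading $w$ on $R$ defined by $\deg_w(x_1) = d_1$ and $\deg_w(x_j) = 1$ for $j \ge 2$, and $\phi \colon (R, w) \to (R, \mathrm{std})$ is a degree-$0$ graded ring homomorphism.

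Next, $\phi$ is flat: identifying $\phi(R) = K[x_1^{d_1}, x_2, \ldots, x_n]$, the ring $R$ is a free $\phi(R)$-module with basis $1, x_1, \ldots, x_1^{d_1-1}$. I would take a minimal free resolution $F_\bullet \to I \to 0$ built from bihomogeneous generators at each stage, so that it is simultaneously $w$-graded and standard-graded. Applying $\otimes_{R,\phi} R$ produces a complex $\tilde{F}_\bullet$ of standard-graded free $R$-modules whose differentials are obtained from those of $F_\bullet$ by applying $\phi$ entrywise. Flatness forces exactness, and the identification $R/I \otimes_{R,\phi} R = R/\phi(I)R$ shows $\tilde{F}_\bullet$ resolves $\phi(I)$. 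Minimality persists, since $\phi(\mathfrak{m}) = (x_1^{d_1}, x_2, \ldots, x_n) \subseteq \mathfrak{m}$ and $\phi$ is injective, so no differential entry becomes zero or a unit. Hence $\tilde{F}_\bullet$ is a minimal free resolution of $\phi(I)$, and the ranks of the free modules---the ungraded Betti numbers---of $I$ and $\phi(I)$ agree.

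For the regularity inequality, I would compare standard-graded shifts basis element by basis element. A bihomogeneous basis element of $F_i$ of bidegree $(a, d)$ has standard degree $d$ in the minimal resolution of $I$, while the corresponding basis element of $\tilde{F}_i$ has standard degree $(d_1 - 1) a + d \ge d$ in the minimal resolution of $\phi(I)$; taking maxima of (degree) $- i$ over all basis elements then yields $\reg(\phi(I)) \ge \reg(I)$. The main subtlety I anticipate is justifying that a single underlying complex can serve as the minimal resolution in both the standard and $w$-gradings: this reduces to the observation that minimality is a grading-independent $\mathfrak{m}$-adic condition on matrix entries, so a bihomogeneous minimal resolution remains minimal after coarsening to either grading.
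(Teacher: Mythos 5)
Your argument is correct. The mechanism is ultimately the same as the paper's---transport a minimal free resolution of $I$ through the substitution $x_1\mapsto x_1^{d_1}$ and observe that graded shifts can only increase---but your justification is genuinely more self-contained. The paper factors $\phi=\psi\sigma$ through an intermediate ring $S$ with the weight grading $d(x_1)=d_1$, handles the regrading step $\sigma$ by inspection, and then cites \cite[Lemma~3.5 and Theorem~3.6(b)]{lattice-dim1} for the substitution step $\psi$. You instead realize $\phi$ directly as a flat (indeed free) base change, $R$ being free over $\phi(R)=K[x_1^{d_1},x_2,\ldots,x_n]$ with basis $1,x_1,\ldots,x_1^{d_1-1}$, so that exactness and minimality of the transported complex follow from flatness and from $\phi(\mathfrak{m})\subseteq\mathfrak{m}$ respectively; this buys a proof with no external input and makes the equality of Betti numbers transparent. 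You also make explicit a point the paper only asserts ``by assumption'': that homogeneity of $\phi(I)$ forces $I$ to be bihomogeneous (equivalently, $w$-homogeneous), which is what legitimizes choosing a bigraded minimal resolution and comparing shifts. One small step you should tighten: from ``$\phi(f_a)\in\phi(I)$'' to ``$f_a\in I$'' you invoke only injectivity of $\phi$, but if $\phi(I)$ denotes the extended ideal $\phi(I)R$ you also need the contraction identity $\phi(I)R\cap\phi(R)=\phi(I)\phi(R)$, which again follows from the freeness of $R$ over $\phi(R)$; as stated, injectivity alone does not suffice. With that noted, the shift computation $(a,d)\mapsto d+(d_1-1)a\geq d$ correctly yields $\reg(\phi(I))\geq\reg(I)$.
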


\begin{proof}
Define $S=K[x_1,\ldots,x_n]$ to be a polynomial ring with the
non-standard grading $d(x_1)=d_1$ and $d(x_i)=1$ for 
$2 \leq i \leq n$. Note that the map $\phi$ factors through $S$. Write $\phi = \psi
\sigma$, where $\sigma\colon R \rightarrow S$ is given by $\sigma(x_i)=x_i$
for all $i$ and $\psi\colon S \rightarrow R$ is given by
$\psi(x_1)=x_1^{d_1}$ and $\psi(x_i)=x_i$ for $2 \leq i \leq n$.
Then, by assumption, $I$ is a homogeneous ideal of $R$ and
$\sigma(I)$ is again homogeneous in $S$. Applying $\sigma$ to a
minimal resolution of $I$ yields a minimal resolution of $\sigma(I)$,
where the modules and maps are unchanged except that the degrees of
some of the maps, and thus the shifts in the resolution, may have
increased, showing $\reg(\sigma(I)) \geq \reg (I)$. Now the map
$\psi$ is precisely the map used in \cite[Lemma 3.5 and Theorem
3.6(b)]{lattice-dim1}. The result follows from combining these results.             
\end{proof}

\begin{lemma}\label{morey-lemma} 
Let $I$ and $J$ be monomial ideals of $R$ and let $x_i$
be a variable. If $(I\colon x_i)=J$ and $(I,x_i)=(J,x_i)$, then 
\begin{itemize}
\item[(i)] $\reg(R/J)\leq\reg(R/I)\leq\reg(R/J)+1$, and 
\item[(ii)] $\depth(R/J)-1\leq\depth(R/I)\leq\depth(R/J)$.
\end{itemize}
\end{lemma}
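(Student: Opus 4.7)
The plan is to exploit the short exact sequence
\[
0 \longrightarrow R/(I\colon x_i)[-1] \stackrel{x_i}{\longrightarrow} R/I \longrightarrow R/(I,x_i) \longrightarrow 0,
\]
which under the hypotheses becomes
\[
0 \longrightarrow R/J[-1] \stackrel{x_i}{\longrightarrow} R/I \longrightarrow R/(J,x_i) \longrightarrow 0. \tag{$\ast$}
\]
All four inequalities then fall out by combining $(\ast)$ with the general inequalities already established in Corollary~\ref{Caviglia-et-al-lemma-alternate-proof}.

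For part (ii), the upper bound $\depth(R/I)\leq \depth(R/J)$ is immediate from Corollary~\ref{Caviglia-et-al-lemma-alternate-proof}(ii), since $(I\colon x_i)=J$. For the lower bound, I would apply the standard depth lemma \cite[Lemma~2.3.9]{monalg-rev} to $(\ast)$:
\[
\depth(R/I)\geq \min\{\depth(R/J),\ \depth(R/(J,x_i))\}.
\]
Then Corollary~\ref{Caviglia-et-al-lemma-alternate-proof}(i) applied to $J$ and the monomial $f=x_i$ gives $\depth(R/(J,x_i))\geq \depth(R/J)-1$, so the minimum is at least $\depth(R/J)-1$, finishing (ii).

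For part (i), the lower bound $\reg(R/J)\leq \reg(R/I)$ is immediate from Corollary~\ref{Caviglia-et-al-lemma-alternate-proof}(iv). For the upper bound, I would use the standard regularity inequality for $(\ast)$ (keeping in mind the degree shift on the first term):
\[
\reg(R/I)\leq \max\{\reg(R/J[-1]),\ \reg(R/(J,x_i))\} = \max\{\reg(R/J)+1,\ \reg(R/(J,x_i))\}.
\]
Corollary~\ref{Caviglia-et-al-lemma-alternate-proof}(v) with $f=x_i$ (so $k=1$) yields $\reg(R/(J,x_i))\leq \reg(R/J)$, so the maximum equals $\reg(R/J)+1$, which gives (i).

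There is no real obstacle here; the proof is essentially a bookkeeping exercise, and the only subtlety to be careful about is the degree shift $[-1]$ in $(\ast)$, which is what gives the asymmetric gap of $1$ appearing in both inequalities (the extra $+1$ on the upper bound of $\reg$ and the $-1$ on the lower bound of $\depth$). Everything else is a direct invocation of Corollary~\ref{Caviglia-et-al-lemma-alternate-proof}, which has already done the work of translating the Caviglia-et-al/Dao-Huneke-Schweig bounds through polarization.
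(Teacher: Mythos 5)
Your proof is correct and is essentially the paper's argument in a slightly different packaging: the paper runs the same short exact sequence through the dichotomy statements (vi) and (vii) of Corollary~\ref{Caviglia-et-al-lemma-alternate-proof} (which are themselves equivalent to the min/max bounds you invoke) and uses the hypothesis $(I,x_i)=(J,x_i)$ in exactly the same way to trade $(I,x_i)$ for $(J,x_i)$. No gap; nothing further needed.
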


\begin{proof} (i): By
Corollary~\ref{Caviglia-et-al-lemma-alternate-proof}(v), we have 
$\reg(R/(I,x_i))\leq \reg(R/I)$ and $\reg(R/(J,x_i))\leq \reg(R/J)$, 
and by Corollary~\ref{Caviglia-et-al-lemma-alternate-proof}(vii), we have either
$\reg(R/I)=\reg(R/(I\colon x_i))+1= \reg(R/J)+1$ or
$\reg(R/I)=\reg(R/(I,x_i)) = \reg(R/(J,x_i)) \leq \reg(R/J)$. In the
latter case one has $\reg(R/I)=\reg(R/J)$ because by 
Corollary~\ref{Caviglia-et-al-lemma-alternate-proof}(iv), one 
has $\reg(R/J)\leq\reg(R/I)$. Combining these facts yields $\reg(R/J) \leq \reg (R/I) \leq
\reg(R/J)+1$. 

(ii): By Corollary~\ref{Caviglia-et-al-lemma-alternate-proof}(vi), we
have either
$\depth(R/I)=\depth(R/J)$ or $\depth(R/I)=\depth(R/(I,x_i))$. In the
latter case one has
$$
\depth(R/J)\geq\depth(R/I)=\depth(R/(I,x_i))=\depth(R/(J,x_i))
\geq \depth(R/J)-1
$$
because by parts (ii) and (i) of
Corollary~\ref{Caviglia-et-al-lemma-alternate-proof} 
one has the inequalities $\depth(R/J)\geq\depth(R/I)$ and
$\depth(R/(J,x_i))\geq
\depth(R/J)-1$, respectively. 
\end{proof}

Using the notation introduced for
Theorem~\ref{morey-vila-oaxaca-2017} we are now able to control
regularity when lowering the degrees of the generators of a monomial ideal.  

\begin{theorem}\label{morey-vila-oaxaca-2017-reg}
Let $I$ be a monomial ideal and let $L'$ be the ideal 
$(\{x^a/x_i^{q-1}\vert\,x^a\in\mathcal{B}_i\}\cup\mathcal{A}_i\})$,
where $x_i$ is a variable.
The following hold.
\begin{enumerate}
\item[(a)] If $p\geq 1$ and $q-p\geq 2$, then
$\reg(R/L) \leq \reg(R/I)\leq \reg(R/L)+1$.  
\item[(b)] If $p\geq 0$ and $q-p=1$, then $\reg(R/L)\leq\reg(R/I)$. 
\item[(c)] If $p=0$ and $q\geq 2$,  then 
$\reg(R/L')\leq \reg(R/I) \leq\reg(R/L')+q-1$.
\end{enumerate}
\end{theorem}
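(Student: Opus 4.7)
I plan to mirror the proof of Theorem~\ref{morey-vila-oaxaca-2017}, replacing depth arguments by regularity arguments. The essential tools are Lemma~\ref{sep10-17}(d) (polarization preserves regularity), Corollary~\ref{Caviglia-et-al-lemma-alternate-proof} (the regularity analogs of the inequalities (A) and (B)), and Lemma~\ref{powers-and-resolutions} for the substitution step used in part (c). Because substitutions of the form $x_1^a\to x_1$ change regularity by a controlled amount rather than leaving it unchanged, the resulting inequalities are two-sided with a gap of one in (a) and (b), and of $q-1$ in (c).

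\emph{Part (a).} Without loss of generality take $i=1$. Use the partial polarization from the proof of Theorem~\ref{morey-vila-oaxaca-2017}(a): in $R^{\rm pol}=R[x_{1,2},\ldots,x_{1,q-1}]$ the ideals $I^{\rm pol}$ and $L^{\rm pol}$ agree on all generators except that those coming from $\mathcal{B}_i$ carry a factor of $x_1^2$ in $I^{\rm pol}$ and $x_1$ in $L^{\rm pol}$. Introduce one additional polarization variable $x_{1,q}$ to produce the full polarization $\tilde{I}\subset \tilde{R}=R^{\rm pol}[x_{1,q}]$ of $I$, and extend $L^{\rm pol}$ trivially to $\tilde{L}\subset\tilde{R}$. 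A direct check gives $(\tilde{I}:x_{1,q})=\tilde{L}$. Corollary~\ref{Caviglia-et-al-lemma-alternate-proof}(iv) combined with Lemma~\ref{sep10-17}(d) yields the lower bound $\reg(R/L)\leq\reg(R/I)$. For the upper bound, Corollary~\ref{Caviglia-et-al-lemma-alternate-proof}(vii) with $k=1$ forces $\reg(R/I)\in\{\reg(R/L)+1,\,\reg(\tilde{R}/(\tilde{I},x_{1,q}))\}$. In the second case, reducing modulo $x_{1,q}$ identifies $\tilde{R}/(\tilde{I},x_{1,q})$ with $R^{\rm pol}/J_0$, where $J_0$ is the trivially extended polarization of $(\mathcal{A}_i)$, so that $\reg(\tilde{R}/(\tilde{I},x_{1,q}))=\reg(R/(\mathcal{A}_i))$. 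The needed estimate $\reg(R/(\mathcal{A}_i))\leq\reg(R/L)+1$ follows from the short exact sequence $0\to L/(\mathcal{A}_i)\to R/(\mathcal{A}_i)\to R/L\to 0$ via Lemma~\ref{reg-lemma-special}, using the identification of $L/(\mathcal{A}_i)$ as a shift by $q-1$ of a quotient of ideals extracted from the $x_1$-free parts of $\mathcal{B}_i$ and $\mathcal{A}_i$.

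\emph{Parts (b) and (c).} For (b), repeat the polarization setup of Theorem~\ref{morey-vila-oaxaca-2017}(b): in $R^{\rm pol}=R[x_{1,2},\ldots,x_{1,q}]$ one has $(I^{\rm pol}:x_1)=(L')^{\rm pol}$, where $L'$ denotes $L$ with $x_1$ relabeled as $x_{1,q}$; Corollary~\ref{Caviglia-et-al-lemma-alternate-proof}(iv) then yields $\reg(R/L)=\reg(R^{\rm pol}/(L')^{\rm pol})\leq\reg(R^{\rm pol}/I^{\rm pol})=\reg(R/I)$. For (c), apply Lemma~\ref{powers-and-resolutions} with $\phi(x_i)=x_i^q$ and $\phi$ the identity on the other variables; by construction of $L'$ one has $\phi(L')=I$, so the lemma yields $\reg(R/L')\leq\reg(R/I)$ directly. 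For the upper bound, every generator of $L'$ has $x_i$-degree at most one, hence so does every Betti shift appearing in a minimal resolution of $L'$; the regrading $d(x_i)=q$ therefore increases each such shift by at most $q-1$, giving $\reg(R/I)\leq\reg(R/L')+q-1$. The principal obstacle is the explicit identification of $L/(\mathcal{A}_i)$ and the resulting comparison $\reg(R/(\mathcal{A}_i))\leq\reg(R/L)+1$ in the second case of part (a).
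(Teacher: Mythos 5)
Your part (b) coincides with the paper's argument, and your lower bounds in (a) and (c) are fine, but the upper bound in part (a) has a genuine gap. After the partial polarization you colon by the \emph{new} variable $x_{1,q}$: the identity $(\tilde I\colon x_{1,q})=\tilde L$ does hold, but the companion identity $(\tilde I,x_{1,q})=(\tilde L,x_{1,q})$ fails, because the generators of $\tilde L$ coming from $\mathcal{B}_i$ do not involve $x_{1,q}$ and therefore survive modulo $x_{1,q}$, while those of $\tilde I$ die. Hence the second alternative of Corollary~\ref{Caviglia-et-al-lemma-alternate-proof}(vii) leaves you with $\reg(R/I)=\reg(\tilde R/(\tilde I,x_{1,q}))=\reg(R/(\mathcal{A}_i))$, and you must still prove $\reg(R/(\mathcal{A}_i))\leq\reg(R/L)+1$. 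Your exact sequence $0\to L/(\mathcal{A}_i)\to R/(\mathcal{A}_i)\to R/L\to 0$ only trades this for the bound $\reg(L/(\mathcal{A}_i))\leq\reg(R/L)+1$, which is not established and is not routine: $L/(\mathcal{A}_i)$ is a genuinely nonzero module generated in the degrees $\deg(x^a)-1$, $x^a\in\mathcal{B}_i$, and nothing is said about its higher syzygies; you flag this yourself as ``the principal obstacle.'' The paper avoids the issue by choosing the colon variable so that \emph{both} hypotheses of Lemma~\ref{morey-lemma} hold: after polarizing, $I^{\rm pol}=(x_1^2h_1,\ldots,x_1^2h_m,h_{m+1},\ldots,h_r)$ and $L^{\rm pol}=(x_1h_1,\ldots,x_1h_m,h_{m+1},\ldots,h_r)$ satisfy $(I^{\rm pol}\colon x_1)=L^{\rm pol}$ \emph{and} $(I^{\rm pol},x_1)=(L^{\rm pol},x_1)$, and Lemma~\ref{morey-lemma} (which exists precisely to package parts (iv), (v), (vii) of Corollary~\ref{Caviglia-et-al-lemma-alternate-proof} under these two identities) delivers the two-sided bound in one stroke. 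You should replace your colon-by-$x_{1,q}$ step with this.

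For part (c) you take a genuinely different route from the paper, and it works modulo one justification. The paper applies Lemma~\ref{morey-lemma} inductively, lowering the exponent of $x_i$ from $q$ to $q-1$ to $\cdots$ to $1$, each step costing at most $1$ in regularity, for a total of $q-1$. You instead regrade once via Lemma~\ref{powers-and-resolutions} with $\phi(x_i)=x_i^q$, using $\phi(L')=I$, and bound the increase of each Betti shift by $(q-1)$ times its $x_i$-multidegree. For this you need that every multidegree occurring in the minimal multigraded resolution of $L'$ has $x_i$-component at most one; that follows from the standard fact that such multidegrees divide the least common multiple of the generators (e.g., via the Taylor complex), and all generators of $L'$ have $x_i$-degree at most one since $p=0$. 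Make that step explicit and part (c) is complete; your version has the mild advantage of producing both bounds simultaneously from a single regrading, whereas the paper's induction is more elementary and reuses the same lemma as part (a).
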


\begin{proof}
(a): As in Theorem~\ref{morey-vila-oaxaca-2017}, we assume $i=1$.
Forming a partial polarization of $x_1^q$ with respect to new
variables $x_{1,2}, \ldots , x_{1,q-1}$ will not change the
regularity by Lemma~\ref{sep10-17} (d). By the same argument, forming
a full polarization of $x_2, \ldots, x_n$ will also not change the
regularity. Thus we may assume that $I=(x_1^2h_1, \ldots, x_1^2h_m,
h_{m+1}, \ldots , h_r)$ and $L=(x_1h_1, \ldots ,x_1h_m, h_{m+1},
\ldots, h_r)$ where $h_j$ are squarefree monomials and $x_1$ does not divide
$h_j$ for all $j$. Note that $(I,x_1)=(L,x_1)$ and $(I\colon x_1)=L$. Thus,
by Lemma~\ref{morey-lemma}, we have $\reg(R/I) = \reg(R/L)+1$ or $\reg(R/I) =
\reg (R/L)$ as claimed.    

(b): This part follows from the proof of
Theorem~\ref{morey-vila-oaxaca-2017}(b) and Lemma~\ref{sep10-17}(d). 

(c): We proceed by induction on $q\geq 2$. There are monomials $h_1,\ldots,h_r$ not 
containing $x_1$ such that
$$
I=(x_1^qh_1,\ldots,x_1^qh_m,h_{m+1},\ldots,h_r)\ \mbox{ and }\ 
L=(x_1^{q-1}h_1,\ldots,x_1^{q-1}h_m,h_{m+1},\ldots,h_r).
$$
\quad Note that $(I,x_1)=(L,x_1)$ and $L=(I\colon x_1)$. Then,
applying Lemma~\ref{morey-lemma} to $I$ and $L$, one has $\reg(R/L) \leq \reg (R/I) \leq
\reg(R/L)+1$. In particular the required inequality holds for $q=2$.
If $q>3$, applying induction to $L$, the inequality follows.         
\end{proof}

\begin{corollary}\label{feb21-18} Let $I$ be a monomial ideal of $R$
and let $J$ be
its radical. The following hold.
\begin{itemize}
\item[(i)] \cite{ravi} $\reg(R/J)\leq\reg(R/I)$.
\item[(ii)] If $I$ is Cohen--Macaulay, then 
$a(R/J)\leq a(R/I)$, where $a(\cdot)$ is the $a$-invariant. 
\end{itemize}
\end{corollary}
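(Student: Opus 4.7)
The proof plan mirrors that of Corollary~\ref{herzog-takayama-terai-theo}, with the regularity analog Theorem~\ref{morey-vila-oaxaca-2017-reg} used in place of Theorem~\ref{morey-vila-oaxaca-2017}, together with the well-known formula relating the $a$-invariant to the regularity for Cohen--Macaulay graded rings.

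For part~(i), my plan is to apply Theorem~\ref{morey-vila-oaxaca-2017-reg} iteratively at each variable $x_i$ whose maximum exponent $q$ in $G(I)$ satisfies $q\geq 2$. All three parts (a), (b), and (c) of that theorem yield $\reg(R/L)\leq \reg(R/I)$ (respectively $\reg(R/L')\leq \reg(R/I)$), and in every case the passage from $I$ to its successor strictly decreases the maximum $x_i$-degree appearing in the minimal generators while leaving the exponents of the other variables unchanged. Concretely, if $p=0$ a single application of~(c) reduces the top $x_i$-degree to one; if $p\geq 1$, repeated application of~(a) drives $q$ down to $p+1$, after which (b) merges generators and yields top $x_i$-degree equal to $p$, and one iterates with $p$ in place of $q$. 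After finitely many steps, every variable appears to degree at most one, so the resulting ideal is squarefree. By construction it is the ideal obtained by setting each exponent in each generator of $I$ equal to one, which is precisely $J=\mathrm{rad}(I)$. Since each reduction preserves or decreases the regularity, composing the inequalities gives $\reg(R/J)\leq \reg(R/I)$.

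For part~(ii), the plan is to combine part~(i) with the Cohen--Macaulay hypothesis. Assuming $I$ is Cohen--Macaulay, Corollary~\ref{herzog-takayama-terai-theo} shows that $J$ is Cohen--Macaulay as well. Since $I$ and $J$ have the same minimal primes, $\dim(R/I)=\dim(R/J)$. For any standard graded Cohen--Macaulay quotient $R/K$ of dimension $d$ one has the identity $a(R/K)=\reg(R/K)-d$. Applying this formula to both $R/I$ and $R/J$ and invoking part~(i) gives
$$
a(R/J)=\reg(R/J)-\dim(R/J)\leq \reg(R/I)-\dim(R/I)=a(R/I),
$$
which is the desired inequality.

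The only potential difficulty is the bookkeeping in the iteration of part~(i): one must check that every intermediate ideal again falls under the hypotheses of some part of Theorem~\ref{morey-vila-oaxaca-2017-reg} and that the procedure terminates exactly at $J$ rather than at a proper intermediate ideal. This is automatic because each application strictly decreases the top $x_i$-degree of some generator without touching other variables, so after finitely many steps every variable has top degree at most one and the result is the squarefree ideal with the same supports as $G(I)$, namely $J$.
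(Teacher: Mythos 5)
Your proposal is correct and follows essentially the same route as the paper: part (i) is proved by iterating Theorem~\ref{morey-vila-oaxaca-2017-reg} at each variable until the squarefree ideal $J=\mathrm{rad}(I)$ is reached, and part (ii) combines Corollary~\ref{herzog-takayama-terai-theo} with the formula $a(M)=\reg(M)-\depth(M)$ for Cohen--Macaulay modules and the equality $\dim(R/I)=\dim(R/J)$. The extra bookkeeping you supply for the iteration is a correct elaboration of what the paper leaves implicit.
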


\begin{proof} (i): It follows by applying
Theorem~\ref{morey-vila-oaxaca-2017-reg} to every vertex 
$x_i$  as many times as necessary. 

(ii): By Corollary~\ref{herzog-takayama-terai-theo}, $J$ is
Cohen--Macaulay. Hence, by \cite[Corollary~B.4.1]{Vas1}, one has 
$a(M)={\rm reg}(M)-{\rm depth}(M)$ for $M=R/I$ and $M=R/J$. As 
$\dim(R/I)=\dim(R/J)=\depth(R/I)=\depth(R/J)$, the inequality follows
from part (i).
\end{proof}

\begin{remark} 
Let $I\subset R$ be a monomial ideal and let $f$ be a monomial which
is a non-zero divisor of $R/I$. Then $\reg(R/fI)=\reg(R/I)+\deg(f)$ 
and $\reg(R/(I,f))=\reg(R/I)+\deg(f)-1$. This follows from 
Proposition~\ref{additivity-cm-square}. Thus the upper bound of 
 Theorem~\ref{morey-vila-oaxaca-2017-reg}(c) is tight. 
\end{remark}

\begin{example} The ideals $I=(x_1^2x_2x_3^2,x_3^2x_4,x_4^3x_5)$ and 
$J=(x_1x_2x_3^2,x_3^2x_4,x_4^3x_5)$ have regularity $5$. Thus 
the lower bound of Theorem~\ref{morey-vila-oaxaca-2017-reg}(c)
is also tight.
\end{example}

\begin{example} The ideals
$I=(x_1^7x_2x_3^2,x_1^7x_5^3,x_1^6x_3^2x_4,x_2x_5^7)$, 
$L=(x_1^6x_2x_3^2,x_1^6x_5^3,x_1^6x_3^2x_4,x_2x_5^7)$ 
have regularity $16$ and $13$, respectively. Thus in 
Theorem~\ref{morey-vila-oaxaca-2017-reg}(b), $\reg(R/L)+1$ is not an
upper bound for $\reg(R/I)$.
\end{example}

\section{Edge ideals of clutters with non-increasing depth}

Let $\mathcal{C}$ be a clutter with vertex set $X=\{x_1,\ldots,x_n\}$
and let $\{x^{v_1},\ldots,x^{v_r}\}$ be the minimal generating set of
$I(\mathcal{C})$. The matrix $A$ whose column 
vectors are $v_1^\top,\ldots,v_r^\top$ is called the {\it incidence
matrix\/} of $\mathcal C$. 
The {\it set covering polyhedron\/} of ${\mathcal C}$ is given by:
$$
\mathcal{Q}(A):=\{x\in\mathbb{R}^n\vert\, x\geq 0;\, xA\geq{\mathbf 1}\},
$$
where $\mathbf{1}=(1,\ldots,1)$. The rational polyhedron $\mathcal{Q}(A)$ 
is called {\it integral\/} if it has only integral vertices. A clutter
is called {\it uniform\/} (resp. {\it unmixed\/}) if all its edges
(resp. minimal vertex
covers) have the same cardinality. A clutter is {\it ideal\/} if its
set covering polyhedron is integral \cite{cornu-book}. 

\begin{definition}\rm A clutter $\mathcal{C}$, with incidence matrix
$A$, has the {\it max-flow
min-cut\/} (MFMC) property if both sides 
of the LP-duality equation
\begin{equation}\label{jun6-2-03}
{\rm min}\{\langle \alpha,x\rangle \vert\, x\geq 0; xA\geq\mathbf{1}\}=
{\rm max}\{\langle y,{1}\rangle \vert\, y\geq 0; Ay\leq\alpha\} 
\end{equation}
have integral optimum solutions $x,y$ for each nonnegative integral vector $\alpha$. 
\end{definition}

\begin{definition}\label{symbolic-power-def}\rm 
Let $I$ be a squarefree monomial ideal of $R$ and 
let $\mathfrak{p}_1,\ldots,{\mathfrak p}_r$ be  
the associated primes of $I$. Given an integer $k\geq 1$, we define 
the $k$-th {\it symbolic power} of 
$I$ to be the ideal 
$$
I^{(k)}:=\bigcap_{i=1}^r(I^kR_{\mathfrak{p}_i}\cap R)={\mathfrak
p}_1^{k}\cap\cdots\cap {\mathfrak p}_r^{k}.
$$
\end{definition}
\quad An ideal $I$ of $R$ is 
called {\em normally torsion-free} if ${\rm Ass}(R/I^k)$ is contained in ${\rm Ass}(R/I)$ for all 
$k\geq 1$. Notice that if $I$ is a squarefree monomial ideal,
then $I$ is normally torsion-free if and only 
if $I^k=I^{(k)}$ for all $k\geq 1$. A major result of
\cite{reesclu,clutter} shows that a
clutter $\mathcal{C}$ has the 
max-flow min-cut property if and only if $I(\mathcal{C})$ is normally
torsion-free (cf. \cite[Proposition~3.4]{normali}). 

\begin{lemma}{\rm(\cite[Lemma~5.6]{reesclu},
\cite[Lemma~2.1]{mfmc})}\label{nov19-03} If ${\mathcal C}$ is a 
uniform clutter and $\mathcal{Q}(A)$ is integral, then there exists a
minimal vertex cover of $\mathcal C$  
intersecting every edge of $\mathcal C$ in exactly one vertex. 
\end{lemma}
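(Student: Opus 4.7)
The plan is to apply the LP-duality equation~\eqref{jun6-2-03} with the specific cost vector $\alpha = A\mathbf{1}$, whose $i$-th coordinate is the degree $\deg_\mathcal{C}(x_i) = |\{e \in E(\mathcal{C}) \mid x_i \in e\}|$. With this choice, for the indicator vector $\chi_C$ of any vertex cover $C$,
\[
\langle \alpha, \chi_C\rangle \;=\; \sum_{j=1}^{r}(\chi_C A)_j \;=\; \sum_{e \in E(\mathcal{C})} |C \cap e|,
\]
where $r = |E(\mathcal{C})|$. So if one can pin the primal minimum to exactly $r$ and attain it integrally, then since each $|C \cap e| \geq 1$ with $r$ summands, the equality $\sum_e |C \cap e| = r$ forces $|C \cap e| = 1$ for every edge, yielding the desired cover.

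To bracket both sides of~\eqref{jun6-2-03} by $r$: on the primal side, every vertex cover gives $\langle \alpha, \chi_C\rangle \geq r$ from $|C \cap e| \geq 1$; on the dual side, the vector $y = \mathbf{1} \in \mathbb{R}^r$ is feasible because $Ay = A\mathbf{1} = \alpha$, with objective value $r$. The key step, where uniformity enters, is the reverse dual bound $\sum_j y_j \leq r$: if every edge has cardinality $d$, then $\mathbf{1}^{T} A = d\,\mathbf{1}^{T}$, hence for any feasible $y$,
\[
d \sum_{j=1}^{r} y_j \;=\; \mathbf{1}^{T} A y \;\leq\; \mathbf{1}^{T} \alpha \;=\; \sum_{i=1}^n \deg_\mathcal{C}(x_i) \;=\; dr.
\]
Combined with the feasibility of $\mathbf{1}$, both optima in~\eqref{jun6-2-03} equal $r$.

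Finally, the integrality of $\mathcal{Q}(A)$ guarantees that the primal minimum is attained at an integer vertex of $\mathcal{Q}(A)$, and such a vertex is the indicator $\chi_{C^*}$ of a minimal vertex cover $C^*$ of $\mathcal{C}$. Reading off the value of the objective,
\[
r \;=\; \langle \alpha, \chi_{C^*}\rangle \;=\; \sum_{e \in E(\mathcal{C})} |C^* \cap e|,
\]
and since each of the $r$ summands is at least $1$, we conclude $|C^* \cap e| = 1$ for every edge $e$, as required. The main obstacle in this approach is the dual upper bound $\sum_j y_j \leq r$, which genuinely requires the uniformity hypothesis (the computation $\mathbf{1}^{T} A = d\,\mathbf{1}^{T}$ fails otherwise); once this bound is secured, integrality of $\mathcal{Q}(A)$ automatically upgrades the LP minimizer from a fractional vertex to a genuine minimal vertex cover with the claimed intersection property.
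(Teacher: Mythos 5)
Your proof is correct, but note that the paper never proves Lemma~\ref{nov19-03}: it is quoted verbatim from \cite{reesclu} and \cite{mfmc}, so there is no internal argument to compare against. Your self-contained LP-duality proof with the cost vector $\alpha=A\mathbf{1}$ is sound: $y=\mathbf{1}$ is dual feasible with value $r$, weak duality gives $\min\ge r$, and $d$-uniformity gives the matching bound $\sum_j y_j\le r$ for every dual feasible $y$ (equivalently, and perhaps more directly, $\tfrac{1}{d}\mathbf{1}$ is primal feasible with value $r$, which yields $\min\le r$ without invoking strong duality at all); hence both optima in Eq.~(\ref{jun6-2-03}) equal $r$, and integrality of $\mathcal{Q}(A)$ places an optimal solution at an integral vertex. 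The one step you pass over silently is the assertion that an integral vertex of $\mathcal{Q}(A)$ is the incidence vector of a minimal vertex cover; this requires knowing that such a vertex is a $0$--$1$ vector (a vertex with some coordinate $x_i\ge 2$, or with a redundant coordinate, has no active constraint involving $x_i$ and so cannot have $n$ linearly independent tight constraints). This is a standard fact about set covering polyhedra \cite{cornu-book,monalg-rev}, so citing it would suffice, but your conclusion that each of the $r$ summands $|C^*\cap e|$ is at least $1$ depends on it. For what it is worth, the argument in the cited sources can also be run without duality: since $\tfrac{1}{d}\mathbf{1}\in\mathcal{Q}(A)$, integrality lets one write it as a convex combination of incidence vectors of minimal covers plus a nonnegative vector, and evaluating each edge constraint forces $|C_k\cap e|=1$ for every cover $C_k$ appearing with positive weight; your route and this one use uniformity and integrality in essentially the same places.
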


\begin{theorem}\label{dec10-17}\cite[Theorem 1.17]{cornu-book} 
If $\mathcal{Q}(A)$ is integral and
$B$ is the incidence matrix of the clutter $\mathcal{C}^\vee$ of
minimal vertex covers of $\mathcal{C}$, then
$\mathcal{Q}(B)$ is integral.
\end{theorem}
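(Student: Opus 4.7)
The statement is Lehman's blocker theorem, asserting that idealness is preserved under taking blockers of clutters. Since $\mathcal{C}^{\vee\vee}=\mathcal{C}$, the result is actually a ``both directions'' equivalence, so my plan is to prove just one direction, which already requires all the work.

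My approach will be via LP duality together with the idempotency of the blocker. First, I would recast the integrality condition in a usable min-max form: $\mathcal{Q}(A)$ is integral if and only if for every $w\in\mathbb{Z}^n_+$ the LP $\min\{\langle w,x\rangle\colon x\in\mathcal{Q}(A)\}$ is attained at a $0/1$ vector, and (by LP duality applied to the dual $\max\{\langle \mathbf{1},y\rangle\colon y\geq 0,\, Ay\leq w^\top\}$) this is equivalent to the min-max equality
\[
\min\{w(C)\colon C\in E(\mathcal{C}^\vee)\}\;=\;\max\{\langle\mathbf{1},y\rangle\colon y\geq 0,\ Ay\leq w^\top\}.
\]
Using $\mathcal{C}^{\vee\vee}=\mathcal{C}$, integrality of $\mathcal{Q}(B)$ is the analogous statement with $A$ and $B$ swapped and $E(\mathcal{C}^\vee)$ replaced by $E(\mathcal{C})$.

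Next, I would take a fractional vertex $\bar{x}$ of $\mathcal{Q}(B)$ and choose $w\in\mathbb{Z}^n_+$ so that $\bar{x}$ is the unique LP optimum of $\min\{\langle w,x\rangle\colon x\in\mathcal{Q}(B)\}$; the goal is to contradict integrality of $\mathcal{Q}(A)$. The heart of the argument is a scaling/exchange step: one applies LP duality on the $B$-side to obtain an optimal fractional packing $y^\ast$ with $By^\ast\leq w$ and $\langle \mathbf{1},y^\ast\rangle = \langle w,\bar{x}\rangle$, and then shows, via the hypothesis applied to a perturbed weight $w'$ (built from $w$ and the support of $\bar{x}$), that any integral optimum of $\min\{\langle w',x\rangle\colon x\in\mathcal{Q}(A)\}$ must itself be a cover of $\mathcal{C}^\vee$. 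Comparing the value of this integral cover against the value $\langle w,\bar{x}\rangle$ forces $\bar{x}$ to coincide with a $0/1$ vector, contradicting its fractionality.

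The main obstacle is precisely this exchange step: LP duality by itself does not transport integrality from one side to the other, because a fractional vertex of $\mathcal{Q}(B)$ need not correspond to a fractional vertex of $\mathcal{Q}(A)$. Lehman's clever choice of the perturbed weight $w'$---designed so that the hypothesis on $\mathcal{Q}(A)$ produces a certificate (an edge of $\mathcal{C}$, equivalently a minimal cover of $\mathcal{C}^\vee$, equivalently a column of $A$) that is simultaneously feasible for the $B$-problem at value strictly below $\langle w,\bar{x}\rangle$---is the crux. Once this is arranged, the contradiction is immediate, and the symmetry $\mathcal{C}^{\vee\vee}=\mathcal{C}$ then yields the stated implication; I would reference Lehman's original construction rather than reprove the combinatorial exchange from scratch, since this is the content of \cite[Theorem~1.17]{cornu-book}.
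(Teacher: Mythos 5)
This theorem is not proved in the paper at all: it is quoted verbatim from Corn\'uejols' book (Lehman's blocker theorem), so there is no internal argument to compare yours against. Your preliminary reductions are correct --- the recasting of integrality of $\mathcal{Q}(A)$ as the min--max equality $\min\{w(C)\colon C\in E(\mathcal{C}^\vee)\}=\max\{\langle\mathbf{1},y\rangle\colon y\geq 0,\ Ay\leq w^\top\}$ for all $w\in\mathbb{Z}^n_+$ is the standard and valid starting point, and the observation that $\mathcal{C}^{\vee\vee}=\mathcal{C}$ makes the statement symmetric is right. However, the step you yourself identify as the heart of the matter (the ``perturbed weight $w'$'' and the exchange producing a certificate that contradicts fractionality of $\bar{x}$) is described only in outline and is explicitly deferred to Lehman; as written it cannot be checked, so your proposal is a citation dressed as a sketch rather than a proof --- which, to be fair, is exactly the status the result has in the paper. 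For the record, the cleanest route to the actual proof is not the contradiction you gesture at but Lehman's width--length characterization: $\mathcal{Q}(A)$ is integral if and only if $\min_{e\in E(\mathcal{C})}w(e)\cdot\min_{b\in E(\mathcal{C}^\vee)}l(b)\leq\langle w,l\rangle$ for all $w,l\in\mathbb{R}^n_+$, a condition manifestly invariant under interchanging $\mathcal{C}$ with $\mathcal{C}^\vee$, from which the blocker theorem is immediate; if you intend to supply a genuine proof rather than a pointer, that is the lemma to prove.
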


\begin{lemma}\label{nov19-03-dual} 
If ${\mathcal C}$ is an unmixed clutter and $\mathcal{Q}(A)$ is integral, then
there exists an edge of $\mathcal{C}$  
intersecting every minimal vertex cover of $\mathcal C$ in exactly one vertex.
\end{lemma}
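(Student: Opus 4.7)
The plan is to deduce this statement by dualization, applying Lemma~\ref{nov19-03} to the blocker clutter $\mathcal{C}^\vee$ in place of $\mathcal{C}$. The underlying principle is that the two lemmas are formally dual: minimal vertex covers of $\mathcal{C}$ are the edges of $\mathcal{C}^\vee$, and, by the well-known double-blocker identity $(\mathcal{C}^\vee)^\vee=\mathcal{C}$, the edges of $\mathcal{C}$ are precisely the minimal vertex covers of $\mathcal{C}^\vee$. So an ``edge-meeting-all-covers-in-one-vertex'' statement for $\mathcal{C}$ becomes a ``cover-meeting-all-edges-in-one-vertex'' statement for $\mathcal{C}^\vee$.

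First I would verify the hypotheses of Lemma~\ref{nov19-03} transfer to $\mathcal{C}^\vee$. Since $\mathcal{C}$ is unmixed, all minimal vertex covers of $\mathcal{C}$ have the same cardinality; these are exactly the edges of $\mathcal{C}^\vee$, so $\mathcal{C}^\vee$ is uniform. Moreover, letting $B$ denote the incidence matrix of $\mathcal{C}^\vee$, Theorem~\ref{dec10-17} guarantees that $\mathcal{Q}(B)$ is integral because $\mathcal{Q}(A)$ is assumed integral. Thus $\mathcal{C}^\vee$ satisfies both hypotheses of Lemma~\ref{nov19-03}.

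Applying Lemma~\ref{nov19-03} to $\mathcal{C}^\vee$ then yields a minimal vertex cover $e$ of $\mathcal{C}^\vee$ which intersects every edge of $\mathcal{C}^\vee$ in exactly one vertex. Translating back via the identity $(\mathcal{C}^\vee)^\vee=\mathcal{C}$, the set $e$ is an edge of $\mathcal{C}$, while the edges of $\mathcal{C}^\vee$ are precisely the minimal vertex covers of $\mathcal{C}$. Therefore $e$ is an edge of $\mathcal{C}$ that meets every minimal vertex cover of $\mathcal{C}$ in exactly one vertex, as required.

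There is no real obstacle here once the dualization is set up correctly; the only mild subtlety is to invoke Theorem~\ref{dec10-17} to pass the integrality hypothesis from $\mathcal{Q}(A)$ to $\mathcal{Q}(B)$, and to remember that unmixedness of $\mathcal{C}$ is exactly what makes $\mathcal{C}^\vee$ uniform. With these two translations in hand, Lemma~\ref{nov19-03} delivers the conclusion immediately.
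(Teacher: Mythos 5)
Your proposal is correct and follows essentially the same route as the paper: both apply Lemma~\ref{nov19-03} to the blocker $\mathcal{C}^\vee$, using unmixedness of $\mathcal{C}$ to get uniformity of $\mathcal{C}^\vee$, Theorem~\ref{dec10-17} to transfer integrality from $\mathcal{Q}(A)$ to $\mathcal{Q}(B)$, and the double-blocker identity to translate the conclusion back to $\mathcal{C}$.
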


\begin{proof} By duality \cite[Theorem~6.3.39]{monalg-rev} 
the minimal vertex covers of $\mathcal{C}^\vee$ (resp.
edges of $\mathcal{C}^\vee$) are the edges of $\mathcal{C}$ (resp.
minimal vertex covers of $\mathcal{C}$). 
Let $B$ be the incidence matrix of $\mathcal{C}^\vee$.
As $\mathcal{Q}(A)$ is integral and $\mathcal{C}$ is unmixed, by
Lemma~\ref{dec10-17}, $\mathcal{Q}(B)$ is also
integral and $\mathcal{C}^\vee$ is uniform. Thus 
applying Lemma~\ref{nov19-03} to $\mathcal{C}^\vee$, 
there exists a minimal vertex cover of $\mathcal{C}^\vee$  
intersecting every edge of $\mathcal{C}^\vee$ in exactly one 
vertex. Hence by duality the result follows.
\end{proof}

Let $I\subset R$ be a homogeneous ideal and let
$\mathfrak{m}=(x_1,\ldots,x_n)$ be the maximal irrelevant 
ideal of $R$.   
Recall that the {\it analytic spread\/} of $I$, denoted by $\ell(I)$, is given by 
$$
\ell(I)=\dim R[It]/\mathfrak{m}R[It].
$$ 
\quad This number satisfies ${\rm ht}(I)\leq \ell(I)\leq\dim(R)$
\cite[Corollary~5.1.4]{Vas}. 

\begin{theorem}\cite{Bur,E-H}\label{burch} ${\rm inf}_i\{{\rm
depth}(R/I^i)\}\leq\dim(R)-\ell(I)$, with equality if the associated graded ring ${\rm
gr}_I(R)$ is 
Cohen--Macaulay.
\end{theorem}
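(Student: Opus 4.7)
The plan is to use the classical theory of minimal reductions. After replacing $R$ with the faithfully flat extension $R[y]_{\mathfrak{m}R[y]}$ if necessary, I may assume the residue field is infinite. By Brodmann's stabilization theorem, the sequence $\{\depth(R/I^i)\}_{i\geq 1}$ is eventually constant, say equal to $c$; the infimum in the statement is this stable value, so it suffices to prove $c\leq \dim(R)-\ell(I)$ in general and $c=\dim(R)-\ell(I)$ when ${\rm gr}_I(R)$ is Cohen--Macaulay.

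For the inequality, I would exploit that the fiber cone $F(I)=R[It]/\mathfrak{m}R[It]$ has Krull dimension $\ell:=\ell(I)$. With the residue field infinite, $\ell$ generic degree-one elements of $F(I)$ form a homogeneous system of parameters; lifting them to $a_1,\ldots,a_\ell\in I$ produces an ideal $J=(a_1,\ldots,a_\ell)$ with $I^{r+1}=JI^r$ for some $r\geq 0$, i.e.\ a minimal reduction of $I$. Iterating gives $I^{r+k}=J^kI^r\subseteq J^k$ for all $k\geq 0$. The Koszul complex on $a_1,\ldots,a_\ell$ provides a length-$\ell$ free resolution of $R/J$, and an analysis combining the Artin--Rees lemma with the Auslander--Buchsbaum formula applied to the $J$-adic filtration of $R/I^{r+k}$ yields $\depth(R/I^n)\leq \dim(R)-\ell$ for $n\gg 0$.

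Under the Cohen--Macaulay hypothesis on ${\rm gr}_I(R)$, the initial forms $a_1^{\ast},\ldots,a_\ell^{\ast}$ of a minimal reduction form a regular sequence in ${\rm gr}_I(R)$, because the CM property promotes any homogeneous system of parameters in the fiber to a regular sequence at the graded level. From the exact sequences $0\to I^n/I^{n+1}\to R/I^{n+1}\to R/I^n\to 0$ together with the estimate $\depth(I^n/I^{n+1})\geq \dim(R)-\ell+1$ furnished by the CM assumption on ${\rm gr}_I(R)$, the depth lemma and induction on $n$ force $\depth(R/I^n)=\dim(R)-\ell$ for $n\gg 0$, yielding equality.

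The main obstacle is the transfer of depth estimates along the inclusion $I^{r+k}\subseteq J^k$ in the inequality step, since $a_1,\ldots,a_\ell$ are typically not a regular sequence on $R$, and hence Auslander--Buchsbaum does not apply directly to $R/J^k$. The remedy is to work at the filtration level inside ${\rm gr}_I(R)$ and the fiber cone---where ``minimal reduction'' becomes ``system of parameters''---and then to descend to $R$ via Artin--Rees. The Cohen--Macaulay hypothesis precisely removes this defect by upgrading the system of parameters to a regular sequence, which is exactly what lifts the Burch inequality to the Eisenbud--Huneke equality.
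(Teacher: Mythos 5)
The paper does not prove Theorem~\ref{burch}; it is quoted from Burch and Eisenbud--Huneke, so there is no internal argument to compare yours against, and I can only assess the sketch itself. It has genuine gaps in both halves. For the equality half, your key claim---that when ${\rm gr}_I(R)$ is Cohen--Macaulay the initial forms $a_1^{*},\ldots,a_\ell^{*}$ of a minimal reduction form a regular sequence on ${\rm gr}_I(R)$---is false unless $I$ is equimultiple. Indeed, $I^{r+1}=JI^{r}$ forces $(a_1^{*},\ldots,a_\ell^{*})$ to contain a power of the irrelevant ideal ${\rm gr}_I(R)_{+}$ while being contained in it, so ${\rm gr}_I(R)/(a_1^{*},\ldots,a_\ell^{*})$ has dimension $\dim(R/I)$; for a length-$\ell$ regular sequence in a Cohen--Macaulay ring of dimension $d=\dim R$ this would have to equal $d-\ell$, i.e.\ $\ell(I)={\rm ht}(I)$. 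The depth estimate you need is obtained differently: Cohen--Macaulayness gives ${\rm grade}(\mathfrak{m}\,{\rm gr}_I(R))={\rm ht}(\mathfrak{m}\,{\rm gr}_I(R))=d-\ell$, and a regular sequence of degree-zero elements on ${\rm gr}_I(R)$ is regular on every graded piece $I^n/I^{n+1}$, whence $\depth_R(I^n/I^{n+1})\geq d-\ell$ for all $n$ and, by your short exact sequences, $\depth(R/I^n)\geq d-\ell$ for all $n\geq 1$. That uniform lower bound is essential, because your reduction ``the infimum is the Brodmann stable value'' is also false in general (the depth function can dip below its limit---the paper's Example~\ref{kaiser} has depth sequence $8,5,0,4$), so a statement proved only for $n\gg 0$ cannot control $\inf_i\depth(R/I^i)$.

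The inequality half is not established either. The Koszul complex on $a_1,\ldots,a_\ell$ resolves $R/J$ only when these elements form a regular sequence, which, as you yourself observe, they typically do not; the proposed remedy of ``working at the filtration level and descending via Artin--Rees'' is never carried out and is not how the difficulty is actually circumvented. Burch's proof is an induction on $s=\inf_n\depth(R/I^n)$: since $\bigcup_n{\rm Ass}(R/I^n)$ is finite, if $s>0$ one chooses $x\in\mathfrak{m}$ regular on $R$ and on every $R/I^n$ simultaneously, passes to $R/(x)$, where the analytic spread does not drop while $\dim$ and the minimal depth each drop by one, and reduces to the case $s=0$, where $\ell(I)\leq\dim R$ holds for any ideal. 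If you want a complete proof you should follow that line (or the Eisenbud--Huneke local cohomology argument) rather than the Koszul/Auslander--Buchsbaum route.
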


Brodmann \cite{brodmann} improved this inequality by showing that 
${\rm depth}(R/I^k)$ is constant for $k\gg 0$ and that this constant
value is bounded from above by $\dim(R)-\ell(I)$. For a generalization of these results to other
ideal filtrations see \cite[Theorem~1.1]{depth}. The constant value of
$\depth(R/I^k)$ for $k\gg 0$ is called the {\it limit depth\/} of $I$
and is denoted by $\lim_{k\rightarrow\infty}\depth(R/I^k)$. 

\begin{definition}
A homogeneous ideal $I\subset R$ has {\it non-increasing depth\/} if 
$$
\depth(R/I^k)\geq\depth(R/I^{k+1})\ \forall\ k\geq 1,
$$
and $I$ has {\it non-decreasing regularity\/} if 
$\reg(R/I^k)\leq\reg(R/I^{k+1})$ for all $k\geq 1$. 
The ideal $I$  has the {\it persistence property\/} if 
${\rm Ass}(R/I^k)\subset {\rm Ass}(R/I^{k+1})$ for
$k\geq 1$.
\end{definition}

There are some classes of monomial ideals with non-increasing depth and
non-decreasing regularity  
\cite{Caviglia-et-al,constantinescu-etal,Nguyen-Thu-Hang,
Hang-Trung,very-well-covered-non-inc}. A natural way to show these 
properties for a monomial ideal $I$ is to prove the existence of a
monomial $f$ such that $(I^{k+1}\colon f)=I^k$ for $k\geq 1$. This was
exploited in \cite{Caviglia-et-al,edge-ideals} and in \cite[Corollary~3.11]{Ha-Morey} in
connection to normally torsion-free ideals. 

\begin{theorem}{\cite[Theorem~5.1]{Caviglia-et-al}} 
If $I(\mathcal{C})$ is the edge ideal of a clutter $\mathcal{C}$ which has a good
leaf, then $I(\mathcal{C})$ has non-increasing depth and
non-decreasing regularity. 
\end{theorem}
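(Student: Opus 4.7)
The plan is to exhibit a single monomial that witnesses both non-increasing depth and non-decreasing regularity for every power simultaneously, and then invoke the inequalities (A) and (B) stated in the introduction (equivalently, Corollary~\ref{Caviglia-et-al-lemma-alternate-proof}(ii) and (iv)). More precisely, I will look for a squarefree monomial $f$ such that
\begin{equation*}
(I(\mathcal{C})^{k+1}\colon f)=I(\mathcal{C})^{k}\quad\text{for every }k\geq 1.
\end{equation*}
Once this colon formula is established, applying (A) with $I=I(\mathcal{C})^{k+1}$ gives $\depth(R/I(\mathcal{C})^{k})\geq\depth(R/I(\mathcal{C})^{k+1})$, and applying (B) gives $\reg(R/I(\mathcal{C})^{k+1})\geq\reg(R/I(\mathcal{C})^{k})$. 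This is the standard colon-ideal strategy already mentioned in the excerpt in connection with normally torsion-free ideals and simplicial forests.

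The natural candidate for $f$ is the monomial $x_e=\prod_{x_i\in e}x_i$ associated with the good leaf $e$ of $\mathcal{C}$. Recall that $e$ is a good leaf precisely when the collection $\{e\cap e'\colon e'\in E(\mathcal{C})\}$ is totally ordered by inclusion; equivalently, one may enumerate the edges as $e=e_1,e_2,\ldots,e_m$ so that $e\cap e_1\supseteq e\cap e_2\supseteq\cdots\supseteq e\cap e_m$. The inclusion $I(\mathcal{C})^{k}\subseteq(I(\mathcal{C})^{k+1}\colon x_e)$ is immediate because $x_e\in I(\mathcal{C})$, so the whole content of the claim is the reverse containment $(I(\mathcal{C})^{k+1}\colon x_e)\subseteq I(\mathcal{C})^{k}$.

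For the reverse containment I plan to argue as follows. Take a minimal monomial generator $g$ of $(I(\mathcal{C})^{k+1}\colon x_e)$, so that $gx_e=x_{e_{i_1}}\cdots x_{e_{i_{k+1}}}h$ for some edges $e_{i_1},\ldots,e_{i_{k+1}}$ and some monomial $h$. Choose that factorization so that the multiset $\{e_{i_1},\ldots,e_{i_{k+1}}\}$ is maximal with respect to the chain order on $\{e\cap e'\}$, i.e.\ the intersections $e\cap e_{i_j}$ are as large as possible. The good leaf hypothesis, together with a greedy exchange along the chain of intersections, then forces one of these factors (say $x_{e_{i_{k+1}}}$) to be divisible by $x_e$, and in fact to equal $x_e$ after reshuffling the auxiliary factors of $h$. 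Cancelling $x_e$ from both sides yields $g=x_{e_{i_1}}\cdots x_{e_{i_k}}h'\in I(\mathcal{C})^{k}$. The key combinatorial input is exactly the total ordering of intersections with $e$: it permits the exchange argument that replaces one of the generators appearing in a factorization of $gx_e$ by $x_e$ itself.

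I expect the exchange argument in the last paragraph to be the main technical obstacle. The subtlety is that $gx_e$ might admit many factorizations as a product of $k+1$ generators of $I(\mathcal{C})$, and we must select one in which a copy of $x_e$ appears explicitly; without the total ordering of the intersections $e\cap e_i$, such a selection can fail (as it does for general leaves or for clutters without a good leaf). Once the colon formula $(I(\mathcal{C})^{k+1}\colon x_e)=I(\mathcal{C})^{k}$ is verified, the conclusion of the theorem follows at once from Corollary~\ref{Caviglia-et-al-lemma-alternate-proof}(ii) and (iv) applied to $I=I(\mathcal{C})^{k+1}$ and $f=x_e$, yielding non-increasing depth and non-decreasing regularity of the powers of $I(\mathcal{C})$.
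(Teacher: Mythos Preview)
The paper does not give its own proof of this statement: it is quoted verbatim as \cite[Theorem~5.1]{Caviglia-et-al} and immediately followed by ``Our next result gives another wide family\ldots'', with no argument in between. So there is no in-paper proof to compare against.

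That said, your strategy is precisely the one the paper explicitly attributes to the cited reference in the paragraph just before the theorem (``A natural way to show these properties for a monomial ideal $I$ is to prove the existence of a monomial $f$ such that $(I^{k+1}\colon f)=I^k$ for $k\geq 1$. This was exploited in \cite{Caviglia-et-al,edge-ideals}\ldots''), and it is the same template the paper uses to prove its own Theorem~\ref{winter2017}. In that sense your plan matches the intended approach, and the final step---invoking Corollary~\ref{Caviglia-et-al-lemma-alternate-proof}(ii),(iv)---is exactly right.

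The only part you flag as a ``main technical obstacle'' is indeed the real work: showing $(I(\mathcal{C})^{k+1}\colon x_e)\subseteq I(\mathcal{C})^{k}$ via an exchange argument along the chain of intersections $e\cap e_i$. Your sketch is in the right spirit, but the sentence ``forces one of these factors \ldots\ to be divisible by $x_e$, and in fact to equal $x_e$'' is where the actual combinatorics lives; as written it is an assertion rather than an argument. A complete proof has to make the greedy exchange precise (e.g.\ by tracking multiplicities of the vertices in $e$ across the factorization and using the total order on the $e\cap e_i$ to peel off one full copy of $x_e$). This is carried out in the cited source; your outline is correct but would need that step filled in to stand on its own.
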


In particular edge ideals of forests or simplicial trees 
have non-increasing depth and non-decreasing regularity. Our next
result gives another wide family of ideals with these properties. 

\begin{theorem}\label{winter2017} Let $\mathcal{C}$ be a clutter and let
$I=I(\mathcal{C})$ be its edge ideal. If $\mathcal{C}$ is unmixed and
satisfies the max-flow min-cut property, then 
\begin{itemize}
\item[(a)] $\depth(R/I^k)\geq\depth(R/I^{k+1})$ for $k\geq 1$, and 
\item[(b)] $\reg(R/I^k)\leq\reg(R/I^{k+1})$ for $k\geq 1$.
\end{itemize}
\end{theorem}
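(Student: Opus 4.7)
The plan is to exhibit a single monomial $f$ with the property that $(I^{k+1}\colon f)=I^k$ for every $k\geq 1$, and then read off both statements from Corollary~\ref{Caviglia-et-al-lemma-alternate-proof}(ii), (iv). This colon-ideal technique is exactly the mechanism singled out in the paragraph preceding Theorem~\ref{winter2017}, so the whole argument reduces to (i) producing the right $f$ and (ii) verifying the colon identity.

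For (i), I would use the combinatorial input. The max-flow min-cut hypothesis is equivalent to $\mathcal{Q}(A)$ being integral (equivalently, $I(\mathcal{C})$ being normally torsion-free), and $\mathcal{C}$ is unmixed by assumption, so Lemma~\ref{nov19-03-dual} supplies an edge $e\in E(\mathcal{C})$ meeting every minimal vertex cover of $\mathcal{C}$ in exactly one vertex. Take
$$
f \;=\; x_e \;=\; \prod_{x_i\in e} x_i.
$$

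For (ii), note that $f\in I$, so $I^k\subseteq (I^{k+1}\colon f)$ is automatic. For the reverse inclusion, I would use the normally torsion-free property: $I^{k+1}=I^{(k+1)}=\bigcap_{\mathfrak{p}} \mathfrak{p}^{k+1}$, where the intersection runs over the associated (= minimal) primes of $I$, which correspond to the minimal vertex covers of $\mathcal{C}$. It suffices therefore to show $(\mathfrak{p}^{k+1}\colon f)=\mathfrak{p}^k$ for each such $\mathfrak{p}$. Writing $\mathfrak{p}=(x_{j}:x_j\in C)$ for a minimal vertex cover $C$, the choice of $e$ gives $|e\cap C|=1$, so we can factor $f=x_j f'$ with $x_j\in\mathfrak{p}$ and $f'$ a product of variables outside $\mathfrak{p}$. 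Since $\mathfrak{p}^{k+1}$ is generated by variables, being in $\mathfrak{p}^{k+1}$ just counts $\mathfrak{p}$-degree of each monomial; multiplication by $f'$ does not affect $\mathfrak{p}$-degree, and multiplication by $x_j$ raises it by one. A direct monomial check then shows $(\mathfrak{p}^{k+1}\colon f)=\mathfrak{p}^k$. Intersecting over all minimal primes of $I$ yields
$$
(I^{k+1}\colon f)\;=\;\bigcap_{\mathfrak{p}}(\mathfrak{p}^{k+1}\colon f)\;=\;\bigcap_{\mathfrak{p}}\mathfrak{p}^{k}\;=\;I^{(k)}\;=\;I^{k}.
$$

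With the identity in hand, the theorem is immediate: applying Corollary~\ref{Caviglia-et-al-lemma-alternate-proof}(ii) to $I^{k+1}$ and $f$ gives
$$
\depth(R/I^{k})\;=\;\depth\bigl(R/(I^{k+1}\colon f)\bigr)\;\geq\;\depth(R/I^{k+1}),
$$
proving (a), and applying part (iv) of the same corollary gives
$$
\reg(R/I^{k+1})\;\geq\;\reg\bigl(R/(I^{k+1}\colon f)\bigr)\;=\;\reg(R/I^{k}),
$$
proving (b). The main obstacle is the existence of the edge $e$ with the required one-point-transversal property, but this is exactly the content of Lemma~\ref{nov19-03-dual}, so the combinatorial heavy lifting has already been done; the algebraic part is then just the monomial-level colon computation at each minimal prime.
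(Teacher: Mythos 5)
Your proposal is correct and follows essentially the same route as the paper: Lemma~\ref{nov19-03-dual} produces the edge $e$ meeting every minimal vertex cover in exactly one vertex, normal torsion-freeness gives $I^k=I^{(k)}=\bigcap_i\mathfrak{p}_i^k$, the transversality yields $(I^{k+1}\colon x_e)=I^k$, and Corollary~\ref{Caviglia-et-al-lemma-alternate-proof}(ii),(iv) finishes. The only nit is your claim that MFMC is \emph{equivalent} to integrality of $\mathcal{Q}(A)$ --- MFMC is strictly stronger (integrality is just idealness), but only the implication MFMC $\Rightarrow$ integral is used, so the argument is unaffected.
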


\begin{proof} Let $C_1,\ldots,C_s$ be the minimal vertex covers of
$\mathcal{C}$. If $\mathfrak{p}_i$ is the ideal of $R$ generated by 
$C_i$ for $i=1,\ldots,s$, then $\mathfrak{p}_1,\ldots,\mathfrak{p}_s$
are the minimal primes of $I$ \cite[Theorem~6.3.39]{monalg-rev}. 
As $\mathcal{C}$ has the max-flow min-cut property, by
\cite[Corollary~22.1c]{Schr}, $\mathcal{Q}(A)$ is integral. Therefore, by
Lemma~\ref{nov19-03-dual}, there exists an edge $e$ of $\mathcal{C}$
intersecting every $C_i$ in exactly one vertex. Thus
$|e\cap\mathfrak{p}_i|=1$ for $i=1,\ldots,s$. We claim that
$(I^{k+1}\colon x_e)=I^k$ for $k\geq 1$, where 
$x_e=\prod_{x_i\in e}x_i$. The 
$k$-th symbolic power of $I$ is given by 
\begin{equation}\label{dec11-17}
I^{(k)}=\mathfrak{p}_1^k\cap\cdots\cap\mathfrak{p}_s^k,
\end{equation}
and by \cite[Corollary~3.14]{clutter}, $I^k=I^{(k)}$ for $k\geq 1$.
Clearly $I^k$ is contained in $(I^{k+1}\colon x_e)$ because $x_e$ is
in $I$. To show the other inclusion take $x^a$ in $(I^{k+1}\colon
x_e)$. Fix any $1\leq i\leq s$. Then $x^ax_e$ is in
$I^{k+1}\subset\mathfrak{p}_i^{k+1}$. Thus there are
$x_{j_1},\ldots,x_{j_{k+1}}$ in $\mathfrak{p}_i$ with $j_1\leq\cdots\leq j_{k+1}$
such that
$$  
x^ax_e=x_{j_1}\cdots x_{j_{k+1}}x^b,
$$
for some $x^b$. Since $|e\cap\mathfrak{p}_i|=1$ from this equality, we get that with one
possible exception all variables that occur in $x_e$ divide $x^b$.
Thus $x^a\in\mathfrak{p}_i^k$. As $1\leq i\leq s$ was an arbitrary fixed integer,
using Eq.~(\ref{dec11-17}), we get $x^a\in I^{(k)}=I^k$. Thus
$(I^{k+1}\colon x_e)=I^k$, as claimed. To prove parts (a) and (b) note that, by
Corollary~\ref{Caviglia-et-al-lemma-alternate-proof}(ii), one has 
$$
\depth(R/I^k)=\depth(R/(I^{k+1}\colon
x_e))\geq\depth(R/I^{k+1}),
$$
and by Corollary~\ref{Caviglia-et-al-lemma-alternate-proof}(iv), one
has $\reg(R/I^k)=\reg(R/(I^{k+1}\colon
x_e))\leq\reg(R/I^{k+1}).$
\end{proof}

\begin{corollary}\label{winter2017-dual} Let $\mathcal{C}$ be a clutter and let
$J=I(\mathcal{C})^\vee$ be its ideal of covers. If $\mathcal{C}$ is uniform and
its set covering polyhedron is integral, then 
\begin{itemize}
\item[(a)] $\depth(R/J^{(k)})\geq\depth(R/J^{(k+1)})$ for $k\geq 1$, and 
\item[(b)] $\reg(R/J^{(k)})\leq\reg(R/J^{(k+1)})$ for $k\geq 1$.
\end{itemize}
\end{corollary}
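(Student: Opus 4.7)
The plan is to mimic the proof of Theorem~\ref{winter2017}, but work on the ideal of covers side using the dual combinatorial input. Observe that $\mathcal{C}$ being uniform with $\mathcal{Q}(A)$ integral is precisely the hypothesis of Lemma~\ref{nov19-03}, which furnishes a minimal vertex cover $C$ of $\mathcal{C}$ satisfying $|C\cap e|=1$ for every edge $e\in E(\mathcal{C})$. The monomial $x_C=\prod_{x_i\in C}x_i$ will play the role that $x_e$ played in the proof of Theorem~\ref{winter2017}.

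Next I would describe $J^{(k)}$ explicitly. Since $J=I(\mathcal{C}^\vee)$ and the minimal vertex covers of $\mathcal{C}^\vee$ are, by duality \cite[Theorem~6.3.39]{monalg-rev}, the edges of $\mathcal{C}$, the minimal primes of $J$ are the ideals $\mathfrak{p}_e=(x_i\,\vert\, x_i\in e)$ with $e\in E(\mathcal{C})$. Because $J$ is squarefree, hence radical, these are all the associated primes of $R/J$, so by Definition~\ref{symbolic-power-def} one has
$$
J^{(k)}=\bigcap_{e\in E(\mathcal{C})}\mathfrak{p}_e^{k}\quad\text{for every }k\geq 1.
$$

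The heart of the argument will be the colon identity $(J^{(k+1)}\colon x_C)=J^{(k)}$. The inclusion $\supseteq$ is immediate: since $C$ is a vertex cover, $x_C\in\mathfrak{p}_e$ for every $e$, so $J^{(k)}\cdot x_C\subseteq\bigcap_e\mathfrak{p}_e^{k+1}=J^{(k+1)}$. For $\subseteq$ I would use the key property $|C\cap e|=1$: it forces the $\mathfrak{p}_e$-degree of $x_C$ to equal exactly $1$, so if $x^a x_C\in\mathfrak{p}_e^{k+1}$ then $x^a\in\mathfrak{p}_e^{k}$; intersecting over all edges gives $x^a\in J^{(k)}$.

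Once this colon identity is in hand, parts (a) and (b) follow at once from Corollary~\ref{Caviglia-et-al-lemma-alternate-proof} applied to the monomial ideal $J^{(k+1)}$ and the monomial $f=x_C$: part (ii) yields $\depth(R/J^{(k)})=\depth(R/(J^{(k+1)}\colon x_C))\geq\depth(R/J^{(k+1)})$, and part (iv) yields $\reg(R/J^{(k)})=\reg(R/(J^{(k+1)}\colon x_C))\leq\reg(R/J^{(k+1)})$. There is no serious obstacle; the only points requiring care are the identification of the associated primes of $J$ (handled by radicality) and the correct extraction of the combinatorial witness $C$ from the integrality hypothesis via Lemma~\ref{nov19-03}.
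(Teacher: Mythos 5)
Your proof is correct and is precisely the adaptation the paper intends when it says the corollary "follows using duality and adapting the proof of Theorem~\ref{winter2017}": you extract the cover $C$ with $|C\cap e|=1$ from Lemma~\ref{nov19-03}, establish $(J^{(k+1)}\colon x_C)=J^{(k)}$ via the primary decomposition $J^{(k)}=\bigcap_e\mathfrak{p}_e^k$, and conclude with parts (ii) and (iv) of Corollary~\ref{Caviglia-et-al-lemma-alternate-proof}. No gaps; this matches the paper's argument (and mirrors the proof of Proposition~\ref{strongly-perfect-symbolic}).
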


\begin{proof} This follows using duality and adapting the proof of
Theorem~\ref{winter2017}.
\end{proof}

\section{Edge ideals of graphs}

Let $G$ be a graph with vertex set $V(G)=\{x_1,\ldots,x_n\}$. A
connected component of $G$ with at least two vertices is 
called non-trivial. We denote the set
of isolated vertices of $G$ by ${\rm isol}(G)$ and the number of
non-trivial bipartite components of $G$ by $c_0(G)$. The 
{\it neighbor} set of $x_i$, denoted $N_G(x_i)$, is the set 
of all $x_j\in V(G)$ such that $\{x_i,x_j\}$ is an edge of $G$.

\begin{proposition}\label{trung-limit-powers} 
Let $I(G)$ be the edge ideal of $G$. The following hold for $k\geq 1$ and $i=1,\ldots,n$.
\begin{itemize}
\item[(a)] $\depth(R/(I(G)^k\colon x_i^k))\leq\depth(R/(I(G\setminus
N_G(x_i))^k,N_G(x_i)))$. 
\item[(b)] \cite[p. 293]{monalg-rev} $(I(G)\colon x_i)=(I(G\setminus
N_G(x_i)),N_G(x_i))$.
\item[(c)] $\dim(R)-\ell(I(G))=|{\rm isol}(G)|+c_0(G)$.
\item[(d)] {\rm \cite[Theorem~4.4(1)]{Tran-Nam-Trung}} 
$\lim_{k\rightarrow\infty}\depth(R/I(G)^k)=|{\rm
isol}(G)|+c_0(G)$.  
\item[(e)] If $H=G\setminus
N_G(x_i)$, then $\lim_{k\rightarrow\infty}\depth(R/(I(H)^k,N_G(x_i)))=|{\rm
isol}(H)|+c_0(H)$.
\end{itemize}
\end{proposition}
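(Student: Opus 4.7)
Parts (b) and (d) are attributed to known results and need no independent proof. For (c), I would identify $\ell(I(G))$ with the Krull dimension of the edge subring $K[G] = K[x_u x_v \vert uv \in E(G)] \subseteq R$, which is isomorphic to the special fiber of the Rees algebra of $I(G)$. Being a monomial $K$-subalgebra, $K[G]$ has Krull dimension equal to the $\mathbb{Q}$-rank of the exponent vectors $\{e_u + e_v \vert uv \in E(G)\}$, which by a standard computation equals $n - b(G)$, where $b(G)$ is the total number of bipartite connected components of $G$ (isolated vertices being trivially bipartite). Splitting $b(G) = |{\rm isol}(G)| + c_0(G)$ gives $\dim(R) - \ell(I(G)) = |{\rm isol}(G)| + c_0(G)$.

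For (e), set $V_i = V(G) \setminus N_G(x_i)$ and $R_i = K[V_i]$. Since $I(H)$ is generated by monomials in the variables of $V_i$, the quotient by $(N_G(x_i))$ yields $R/(I(H)^k, N_G(x_i)) \cong R_i/I(H)^k$. Applying part (d) to the graph $H$ (on vertex set $V_i$, with $x_i$ isolated in $H$) inside $R_i$ gives the claimed limit.

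Part (a) is the substantive statement. The first step is the ideal identity
\[ (I(G)^k \colon x_i^k) = (I(G) \colon x_i)^k = (I(H) + (N_G(x_i)))^k. \]
Each minimal generator of $I(G)^k$ is a product $e_1 \cdots e_k$ of edges of $G$; if $s$ of these contain $x_i$, they factor as $x_i^s x_{j_1} \cdots x_{j_s}$ with $x_{j_\ell} \in N_G(x_i)$, and the remaining $k - s$ edges live in $G \setminus \{x_i\}$. Since $s \le k$, dividing by $x_i^k$ leaves a product of $s$ variables from $N_G(x_i)$ and $k - s$ edges not meeting $x_i$. This identifies $(I(G)^k \colon x_i^k)$ with $\sum_{s=0}^k (N_G(x_i))^s \, I(G \setminus \{x_i\})^{k-s} = ((N_G(x_i)) + I(G \setminus \{x_i\}))^k$, and $I(G \setminus \{x_i\})$ can be replaced by $I(H)$ since edges of $G \setminus \{x_i\}$ meeting $N_G(x_i)$ already lie in $(N_G(x_i))$.

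The second step reduces $J^k$, with $J = (N_G(x_i)) + I(H)$, down to $(I(H)^k, N_G(x_i))$ via successive applications of Theorem~\ref{morey-vila-oaxaca-2017}. Fix $x_j \in N_G(x_i)$; the top degree of $x_j$ among minimal generators of $J^k$ is $k$, attained at $x_j^k$, and applying Theorem~\ref{morey-vila-oaxaca-2017}(b) replaces $x_j^k$ by $x_j^{k-1}$ without decreasing depth. Iterating at $x_j$---using part (a) or (b) of Theorem~\ref{morey-vila-oaxaca-2017} depending on whether $q - p \ge 2$ or $q - p = 1$---the top degree of $x_j$ drops one unit at a time to $1$, at which point $x_j$ itself is a generator that absorbs every remaining generator containing $x_j$ positively. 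Crucially, no variable of $N_G(x_i)$ appears in any edge of $H$, so each reduction modifies only the neighbor-monomial factor $u$ of a generator of the form $u \cdot h$, leaving the $I(H)$-factor $h$ intact. Processing all variables of $N_G(x_i)$ in turn, the surviving generators are exactly the generators of $I(H)^k$ together with the variables $N_G(x_i)$ themselves. The main obstacle is the bookkeeping: at every reduction one must verify that the configuration of top and second-top degrees of $x_j$ falls into the regime of Theorem~\ref{morey-vila-oaxaca-2017} and that the resulting generating set is the expected one, which is tractable only because every minimal generator factors cleanly as a monomial in $N_G(x_i)$ times a product of edges of $H$. Since each reduction gives depth at least as large as before, we conclude $\depth(R/(I(G)^k \colon x_i^k)) \le \depth(R/(I(H)^k, N_G(x_i)))$.
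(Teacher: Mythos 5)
Your proposal is correct and follows essentially the same route as the paper: part (a) is proved by describing the generators of the colon ideal and then successively applying Theorem~\ref{morey-vila-oaxaca-2017} at each variable of $N_G(x_i)$ to lower its top degree to one without decreasing depth, part (c) amounts to the dimension count for the edge subring (the paper instead cites the component-wise values of the analytic spread and its additivity, which is the same computation), and part (e) reduces to (d) exactly as you say. The one point where you go beyond the paper is the clean identity $(I(G)^k\colon x_i^k)=\bigl((N_G(x_i))+I(H)\bigr)^k=(I(G)\colon x_i)^k$ for an arbitrary graph, which the paper only records loosely (it just lists the shape of the minimal generators of the colon ideal) and establishes as an exact identity only for bipartite $G$, via normal torsion-freeness, in the lemma that follows.
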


\begin{proof} (a): Clearly $x_j^k\in(I(G)^k\colon x_i^k)$ for $x_j\in
N_G(x_i)$. Setting $H=G\setminus N_G(x_i)$, it is not hard to see that $x_j^k$ is a minimal generator
of the ideal $(I(G)^k\colon x_i^k)$ for $x_j\in N_G(x_i)$ and that any minimal
generator of $I(H)^k$ is a minimal generator of 
$(I(G)^k\colon x_i^k)$. The colon ideal 
$(I(G)^k\colon x_i^k)$ is minimally generated by 
$$
\{x_j^k\vert\, x_j\in
N_G(x_i)\}\cup G(I(H)^k)\cup\{x^{\alpha_1},\ldots,x^{\alpha_r}\},
$$
for some monomials $x^{\alpha_1},\ldots,x^{\alpha_r}$ such that each
$x^{\alpha_i}$ contains at least one variable in $N_G(x_i)$. One has
the equality 
$$
(N_G(x_i),(I(G)^k\colon x_i^k))=(N_G(x_i),I(H)^k).
$$
\quad Therefore, starting with the ideal $(I(G)^k\colon x_i^k)$ and
any variable $x_j$ in
$N_G(x_i)$, and successively applying
Theorem~\ref{morey-vila-oaxaca-2017}, the required inequality follows.

(c): Let $G_1,\ldots,G_m$ be the non-trivial connected components of
$G$. The analytic spread of $I(G_i)$ is equal to $|V(G_i)|$ if $G_i$
is non-bipartite and is equal to $|V(G_i)|-1$ otherwise 
(see \cite[Corollary~10.1.21 and
Proposition~14.2.12]{monalg-rev}). Hence the equality 
follows from the fact that the analytic spread is additive in
the sense of \cite[Lemma~3.4]{ass-powers}.

(e): This follows at once from part (d).
\end{proof}

\begin{lemma} Let $G$ be a bipartite graph with vertices
$x_1,\ldots,x_n$, let $I(G)$ be its edge
ideal, and let $k\geq 1$, $1\leq i\leq n$ be integers. 
The following hold.
\begin{itemize}
\item[(a)] $(I(G)\colon x_i)^k=(I(G)\colon x_i)^{(k)}$.
\item[(b)] $(I(G)^k\colon x_i^k)=(I(G)\colon
x_i)^k$. 
\end{itemize}
\end{lemma}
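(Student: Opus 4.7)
The plan is to combine three ingredients: the explicit colon formula $(I(G)\colon x_i)=(I(H),N_G(x_i))$ from Proposition~\ref{trung-limit-powers}(b), where $H=G\setminus N_G(x_i)$; the classical fact that the edge ideal of a bipartite graph is normally torsion-free (so ordinary and symbolic powers coincide); and the description of the minimal primes of $I(G)$ as the ideals generated by the minimal vertex covers of $G$.

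For (a), set $W=N_G(x_i)$ and $L=(I(H),W)$. Since $H$ is bipartite and $W$ consists of variables disjoint from the support of $I(H)$, the minimal primes of $L$ are exactly $\mathfrak{p}'+(W)$ as $\mathfrak{p}'$ ranges over the minimal primes of $I(H)$. The inclusion $L^k\subseteq L^{(k)}$ is automatic. For the reverse, I would take any monomial $m\in L^{(k)}$, factor it as $m=m_1m_2$ with $m_1$ supported on $V(H)$ and $m_2$ on $W$, and set $b=\deg(m_2)$. The condition $m\in(\mathfrak{p}'+(W))^k$ then reads $\deg_{\mathfrak{p}'}(m_1)+b\geq k$. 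If $b\geq k$ then $m\in(W)^k\subseteq L^k$; otherwise $m_1\in\bigcap_{\mathfrak{p}'}(\mathfrak{p}')^{k-b}=I(H)^{(k-b)}=I(H)^{k-b}$ by normal torsion-freeness, so $m=m_1m_2\in I(H)^{k-b}(W)^b\subseteq L^k$.

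For (b), the inclusion $(I(G)\colon x_i)^k\subseteq (I(G)^k\colon x_i^k)$ is immediate from the colon definition. For the reverse, I would take a monomial $m$ with $mx_i^k\in I(G)^k$; by bipartiteness $I(G)^k=I(G)^{(k)}$, so $mx_i^k$ lies in $\mathfrak{p}^k$ for every minimal prime $\mathfrak{p}$ of $I(G)$. I would split such primes into Type~A ($x_i\in\mathfrak{p}$), for which $mx_i^k\in\mathfrak{p}^k$ is automatic since $\deg_{\mathfrak{p}}(x_i^k)\geq k$, and Type~B ($x_i\notin\mathfrak{p}$), which forces $N_G(x_i)\subseteq\mathfrak{p}$ because each edge $\{x_i,y\}$ must be covered. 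For Type~B primes the condition $mx_i^k\in\mathfrak{p}^k$ is equivalent to $m\in\mathfrak{p}^k$. Since $x_i$ is isolated in $H$, a direct check identifies the Type~B primes of $I(G)$ with the minimal primes of $L$, so $m\in\bigcap_{\mathfrak{p}\text{ Type~B}}\mathfrak{p}^k=L^{(k)}=L^k$ by~(a).

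The main obstacle is in (a): transferring the equality of ordinary and symbolic powers from $I(H)$ to the augmented ideal $L=(I(H),W)$. The exponent-tracking argument above handles it cleanly using that $W$ and the support of $I(H)$ lie in disjoint sets of variables. Once (a) is in hand, part (b) reduces to bookkeeping, organizing the minimal vertex covers of $G$ by whether they contain $x_i$.
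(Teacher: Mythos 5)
Your proof is correct and follows essentially the same route as the paper: both arguments rest on the colon formula $(I(G)\colon x_i)=(I(G\setminus N_G(x_i)),N_G(x_i))$, the normal torsion-freeness of bipartite edge ideals, and the identification of the minimal primes of the colon ideal with the minimal vertex covers of $G$ avoiding $x_i$. The only difference is that where the paper deduces part (a) by citing the fact that a sum of normally torsion-free ideals in disjoint sets of variables is again normally torsion-free, you verify the needed equality $L^{(k)}=L^{k}$ directly by exponent counting on monomials, and your part (b) is the same primary-decomposition computation carried out monomial by monomial.
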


\demo (a): The graph $G\setminus N_G(x_i)$ is bipartite. 
Hence, according to \cite[Theorem 5.9]{ITG}, 
the ideal $I(G\setminus N_G(x_i))$ is normally torsion-free and so is
the ideal $(N_G(x_i))$ generated by $N_G(x_i)$. Therefore, by 
\cite[Corollary 5.6]{ITG}, the ideal $(I(G\setminus
N_G(x_i)),N_G(x_i))$ is normally torsion-free. Thus it suffices to
observe that $(I(G)\colon x_i)$ is equal to $(I(G\setminus
N_G(x_i)),N_G(x_i))$ (see \cite[p. 293]{monalg-rev}). 

(b): Let $\mathfrak{p}_1,\ldots,\mathfrak{p}_s$ be
the associated primes of $I(G)$. Since $G$ is bipartite, its edge ideal is normally
torsion-free \cite[Theorem 5.9]{ITG}. 
Therefore, using part (a) and noticing that the primary decomposition
of $(I(G)\colon x_i)$ is
$\cap_{x_i\notin\mathfrak{p}_j}\mathfrak{p}_j$, we get
\begin{eqnarray*}
(I(G)^k\colon
x_i^k)&=&\displaystyle\left(\left(\bigcap_{j=1}^s\mathfrak{p}_j\right)^k\colon
x_i^k\right)
=\left(\left(\bigcap_{j=1}^s\mathfrak{p}_j^k\right)\colon
x_i^k\right)=\bigcap_{x_i\notin\mathfrak{p}_j}\mathfrak{p}_j^k\\
&=&(I(G)\colon
x_i)^{(k)} =
(I(G)\colon x_i)^k.\ \Box
\end{eqnarray*}

The regularity of powers of the cover ideal of a bipartite graph was
studied in \cite{Jayanthan-etal} and the depth of symbolic powers of
cover ideals of graphs was examined in \cite{Hoa-etal,seyed-symbolic-covers}. 

\begin{corollary}\label{vila-hang} Let $G$ be a bipartite graph. The following hold. 
\begin{itemize}
\item[\rm(a)] \cite[Corollary~5.3]{kimura-terai-yassemi} If $G$ is unmixed, then $I(G)$ 
has non-increasing depth.
\item[\rm(b)] $($\cite[Theorem~3.2]{constantinescu-etal}, \cite{Nguyen-Thu-Hang},
\cite[Corollary~2.4]{Hang-Trung}$)$ $I(G)^\vee$ has non-increasing depth.
\item[\rm(c)] $I(G)^\vee$ has non-decreasing regularity. 
\end{itemize}
\end{corollary}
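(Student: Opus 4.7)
The plan is to derive all three statements from Theorem~\ref{winter2017} and Corollary~\ref{winter2017-dual}, using the fact that bipartite graphs, viewed as $2$-uniform clutters, enjoy particularly strong combinatorial optimization properties.

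First I would observe the common combinatorial ingredient: when $G$ is bipartite, the edge ideal $I(G)$ is normally torsion-free by \cite[Theorem~5.9]{ITG}. By the theorem of \cite{reesclu,clutter} cited earlier, this is equivalent to $G$ having the max-flow min-cut property, and in particular the set covering polyhedron $\mathcal{Q}(A)$ of $G$ is integral. This is the single fact that unlocks all three parts.

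For part (a), the extra hypothesis that $G$ is unmixed, combined with the MFMC property just noted, places $I(G)$ exactly in the hypothesis of Theorem~\ref{winter2017}, whose conclusion~(a) gives $\depth(R/I(G)^k) \geq \depth(R/I(G)^{k+1})$ for $k \geq 1$. For parts (b) and (c), $G$ is automatically $2$-uniform and (without any unmixedness assumption) $\mathcal{Q}(A)$ is integral. Hence Corollary~\ref{winter2017-dual} applies to $J = I(G)^\vee$ and yields
\[
\depth(R/J^{(k)}) \geq \depth(R/J^{(k+1)}) \quad \text{and} \quad \reg(R/J^{(k)}) \leq \reg(R/J^{(k+1)})
\]
for every $k \geq 1$.

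The last step is to promote these inequalities from symbolic to ordinary powers, and here I would invoke the fact recalled in the introduction that for bipartite $G$ one has $J^{(k)} = J^k$ for all $k \geq 1$ \cite{reesclu}. Substituting this equality into the two displays above immediately produces $\depth(R/J^k) \geq \depth(R/J^{k+1})$ and $\reg(R/J^k) \leq \reg(R/J^{k+1})$, which are precisely (b) and (c). There is no genuine obstacle in this argument; the real work has already been carried out in Theorem~\ref{winter2017} and Corollary~\ref{winter2017-dual}, and the only item requiring care is the symbolic-versus-ordinary distinction in the cover ideal, which is handled by the identification $J^{(k)} = J^k$ from \cite{reesclu}.
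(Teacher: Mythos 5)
Your proof of part (a) is exactly the paper's: bipartite implies MFMC, add unmixedness, quote Theorem~\ref{winter2017}. For parts (b) and (c) you take a slightly different, but correct, route. The paper applies Theorem~\ref{winter2017} directly to the blocker $G^\vee$: by \cite[Theorem~4.6 and Corollary~4.28]{reesclu} the clutter $G^\vee$ has the max-flow min-cut property, and it is unmixed because its minimal vertex covers are the edges of $G$ (all of cardinality two); the conclusion for ordinary powers of $I(G^\vee)=I(G)^\vee$ then drops out with no symbolic/ordinary conversion. You instead apply Corollary~\ref{winter2017-dual} to $G$ itself (which is $2$-uniform with integral $\mathcal{Q}(A)$), obtaining the inequalities for symbolic powers of $J=I(G)^\vee$, and then invoke $J^{(k)}=J^k$ for bipartite $G$. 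That last identification is legitimate and is stated in the paper's introduction, but note that it is itself equivalent to $G^\vee$ having the MFMC property, i.e., to normal torsion-freeness of the cover ideal — so your argument secretly rests on the same fact the paper uses openly, routed through the dual corollary rather than through the dual clutter. The paper's version is marginally more economical; yours has the small advantage of phrasing the hypothesis on $G$ (uniform and ideal) rather than on $G^\vee$, which is the form that generalizes to balanced or unimodular hypergraphs. Both are sound.
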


\begin{proof} (a): By \cite[Theorem~4.6 and Proposition~4.27]{reesclu}, the graph $G$ 
has the max-flow min-cut property and since $G$ is unmixed the result
follows at once from Theorem~\ref{winter2017}. 

(b)--(c): By \cite[Theorem~4.6 and Corollary~4.28]{reesclu}, 
$G^\vee$ has the max-flow min-cut property, and $G^\vee$ is unmixed
because its minimal vertex covers are the edges of $G$. 
Thus by Theorem~\ref{winter2017} the ideal $I(G^\vee)=I(G)^\vee$ has
non-increasing depth and non-decreasing regularity.
\end{proof}

The next interesting example is due to Kaiser,
Stehl\'\i k, and  \v{S}krekovski \cite{persistence-ce}. It shows that
the Alexander dual of a graph does not always has the
persistence property for associated primes. This example also shows
that part (b) of Corollary~\ref{vila-hang} fails for non-bipartite
graphs.

\begin{example}{\rm\cite{persistence-ce}}\label{kaiser}\rm\ 
Let $J=I^\vee$ be the Alexander dual of the edge ideal 
\begin{eqnarray*}
&
I=&(x_1x_2,x_2x_3,x_3x_4,x_4x_5,x_5x_6,x_6x_7,x_7x_8,x_8x_9,x_9x_{10},
x_1x_{10},x_2x_{11},x_8x_{11},\\
& &
x_3x_{12},x_7x_{12},x_1x_9,x_2x_8,x_3x_7,x_4x_6,x_1x_6,x_4x_9,x_5x_{10},
x_{10}x_{11},x_{11}x_{12},x_5x_{12}).
\end{eqnarray*}
Using {\it Macaulay\/}$2$ \cite{mac2}, it is seen that the values of $\depth(R/J^i)$,
for $i=1,\ldots,4$ are $8$, $5$, $0$, $4$, respectively. 
\end{example}

\begin{definition}\label{symbolic-non-inc-depth} Let $I\subset R$ be a
squarefree monomial ideal. The symbolic powers of $I$ have 
{\it non-increasing depth\/} if 
$$
\depth(R/I^{(k)})\geq\depth(R/I^{(k+1)})\ \forall\ k\geq 1,
$$
and have {\it non-decreasing regularity\/} if 
$\reg(R/I^{(k)})\leq\reg(R/I^{(k+1)})$ for all $k\geq 1$.
\end{definition}

If $G$ is a very well-covered graph (i.e., $G$ is unmixed, has no isolated vertices 
and $|V(G)|$ is equal to $2{\rm ht}(I(G))$), 
then the symbolic powers of $I(G)^\vee$ have 
non-increasing depth \cite{very-well-covered-non-inc} (cf.
\cite[Theorem~3.2]{Hoa-etal}) and the symbolic powers of
$I(G)$ have non-increasing depth
\cite[Theorem~5.2]{kimura-terai-yassemi}. 
The next result
complements these facts. 

\begin{proposition}\label{very-well-covered} 
If $G$ is a very well-covered graph, then the symbolic powers of
$I(G)$ have non-decreasing regularity. 
\end{proposition}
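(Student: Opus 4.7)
The plan is to mirror the proof of Theorem~\ref{winter2017}: produce a squarefree monomial $x_e$ such that $(I(G)^{(k+1)}\colon x_e)=I(G)^{(k)}$ for every $k\geq 1$, and then apply Corollary~\ref{Caviglia-et-al-lemma-alternate-proof}(iv) to conclude
$$
\reg(R/I(G)^{(k)})=\reg(R/(I(G)^{(k+1)}\colon x_e))\leq\reg(R/I(G)^{(k+1)}).
$$
Note that the max-flow min-cut hypothesis of Theorem~\ref{winter2017} was only used there to identify ordinary and symbolic powers via \cite[Corollary~3.14]{clutter}; since we work directly with symbolic powers, that hypothesis can be dispensed with, and only a single distinguished edge meeting every minimal vertex cover in exactly one vertex is needed.

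To find the required edge I would invoke the structural description of very well-covered graphs due to Favaron, which guarantees the existence of a perfect matching $\{x_1y_1,\ldots,x_my_m\}$ with $m=|V(G)|/2={\rm ht}(I(G))$. Let $\mathfrak{p}_1,\ldots,\mathfrak{p}_s$ be the associated primes of $I(G)$, which are precisely the minimal vertex covers. Because $G$ is unmixed, each $\mathfrak{p}_j$ is generated by exactly $m$ variables; and since the perfect matching partitions $V(G)$ into $m$ disjoint pairs each of which must be met by every cover, pigeonhole forces $|e'\cap\mathfrak{p}_j|=1$ for every matching edge $e'$ and every $j$. Take $e=\{x_1,y_1\}$.

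The colon identity then follows by a direct computation mirroring that of Theorem~\ref{winter2017}. Using $I(G)^{(k)}=\bigcap_j\mathfrak{p}_j^k$ one has
$$
(I(G)^{(k+1)}\colon x_e)=\bigcap_{j=1}^s(\mathfrak{p}_j^{k+1}\colon x_e).
$$
For each $j$, write $x_e=xy$ with $x$ the unique variable of $e$ lying in $\mathfrak{p}_j$ and $y\notin\mathfrak{p}_j$; since $y$ is a nonzerodivisor modulo $\mathfrak{p}_j^{k+1}$, we get $(\mathfrak{p}_j^{k+1}\colon x_e)=(\mathfrak{p}_j^{k+1}\colon x)=\mathfrak{p}_j^k$. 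Intersecting over $j$ yields $(I(G)^{(k+1)}\colon x_e)=I(G)^{(k)}$, after which Corollary~\ref{Caviglia-et-al-lemma-alternate-proof}(iv) delivers the result.

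The main obstacle is combinatorial rather than algebraic: it lies in citing (or re-deriving) Favaron's characterization to extract the perfect matching. Once this input is available, the algebraic argument is essentially the symbolic-power analogue of Theorem~\ref{winter2017} and presents no new difficulty, since the intersection formula for $I(G)^{(k)}$ and the colon computation $(\mathfrak{p}^{k+1}\colon x)=\mathfrak{p}^k$ for a monomial prime $\mathfrak{p}\ni x$ are both routine.
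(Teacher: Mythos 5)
Your proof is correct and follows essentially the same route as the paper: the paper also picks an edge $e$ of a perfect matching (citing \cite[Corollary~3.7(ii)]{bounds-boletin} rather than Favaron for its existence), observes that every minimal vertex cover meets $e$ in exactly one vertex, deduces $(I^{(k+1)}\colon x_e)=I^{(k)}$, and concludes via Corollary~\ref{Caviglia-et-al-lemma-alternate-proof}(iv). Your pigeonhole justification of $|e'\cap\mathfrak{p}_j|=1$ and the explicit computation $(\mathfrak{p}_j^{k+1}\colon x_e)=\mathfrak{p}_j^k$ are correct and in fact spell out steps the paper leaves terse.
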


\begin{proof} The graph $G$ has a perfect matching by 
\cite[Corollary~3.7(ii)]{bounds-boletin}. Pick an edge $e$ in a
perfect matching of $G$ and set $x_e=\prod_{x_i\in e}x_i$. Note that 
any minimal vertex cover of $G$ intersects $e$ in exactly one vertex
because $G$ is unmixed. Therefore $(I^{(k+1)}\colon x_e)=I^{(k)}$ for
$k\geq 1$. Thus the result follows from part (iv) of 
Corollary~\ref{Caviglia-et-al-lemma-alternate-proof}.
\end{proof}

We will give another family of squarefree monomial ideals whose symbolic powers have
non-increasing depth. A {\it clique\/} of a graph $G$
is a set of vertices inducing a complete subgraph. The {\it clique
clutter\/} 
of $G$, denoted by ${\rm cl}(G)$, is the clutter on $V(G)$ whose edges are the 
maximal cliques of $G$ (maximal with respect to
inclusion).

\begin{definition}\rm A graph $G$ is called {\em strongly perfect}
 if every induced subgraph $H$ of $G$ has a maximal independent set
 of vertices $C$ such that 
$|C\cap e|=1$ for any maximal clique $e$ of $H$. 
\end{definition}

\begin{proposition}\label{strongly-perfect-symbolic} If $G$ is a strongly perfect graph and 
$J=I({\rm cl}(G)^\vee)$, then
\begin{itemize}
\item[(a)] $\depth(R/J^{(k)})\geq\depth(R/J^{(k+1)})\ \mbox{ for }\
k\geq 1$, and 
\item[(b)] $\reg(R/J^{(k)})\leq\depth(R/J^{(k+1)})\ \mbox{ for }\
k\geq 1$.
\end{itemize}
\end{proposition}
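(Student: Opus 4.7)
The plan is to produce a single monomial $f$ satisfying $(J^{(k+1)}\colon f)=J^{(k)}$ for every $k\geq 1$, and then apply parts (ii) and (iv) of Corollary~\ref{Caviglia-et-al-lemma-alternate-proof} to deduce (a) and (b) simultaneously. This mirrors the strategy already used in the proofs of Theorem~\ref{winter2017}, Corollary~\ref{winter2017-dual} and Proposition~\ref{very-well-covered}.

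First I would describe $J^{(k)}$ explicitly. Since $J=I({\rm cl}(G))^\vee$, the duality $I(\mathcal{C})^{\vee\vee}=I(\mathcal{C})$ identifies the associated primes of $J$ with the monomial ideals $\mathfrak{p}_e=(x_i\,\vert\,x_i\in e)$ as $e$ runs over the maximal cliques of $G$. Therefore
$$
J^{(k)}=\bigcap_{e\in E({\rm cl}(G))}\mathfrak{p}_e^{\,k}\qquad\text{for all }k\geq 1.
$$

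Next, applying the strongly perfect hypothesis to the graph $G$ itself, there is a maximal independent set $C$ of $G$ with $|C\cap e|=1$ for every maximal clique $e$ of $G$. Set $f=\prod_{x_i\in C}x_i$. Fix a maximal clique $e$ and let $C\cap e=\{x_j\}$. Write $f=x_j g$, where every variable appearing in $g$ lies in $C\setminus\{x_j\}$ and hence, by the equality $|C\cap e|=1$, outside $e$. Since $g$ is a monomial whose support is disjoint from the variables generating $\mathfrak{p}_e$, one has $(\mathfrak{p}_e^{\,k+1}\colon g)=\mathfrak{p}_e^{\,k+1}$, and a direct exponent count gives $(\mathfrak{p}_e^{\,k+1}\colon x_j)=\mathfrak{p}_e^{\,k}$. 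Combining,
$$
(\mathfrak{p}_e^{\,k+1}\colon f)=\mathfrak{p}_e^{\,k}.
$$
Taking intersections over all maximal cliques $e$ and using the description of $J^{(k)}$ above yields $(J^{(k+1)}\colon f)=J^{(k)}$, which is the key identity.

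Finally, Corollary~\ref{Caviglia-et-al-lemma-alternate-proof}(ii) applied to the ideal $J^{(k+1)}$ gives $\depth(R/J^{(k+1)})\leq\depth(R/(J^{(k+1)}\colon f))=\depth(R/J^{(k)})$, which is (a); and part (iv) of the same corollary gives $\reg(R/J^{(k+1)})\geq\reg(R/(J^{(k+1)}\colon f))=\reg(R/J^{(k)})$, which is (b). The main obstacle is the choice of $f$: what makes the argument work is the \emph{strict} equality $|C\cap e|=1$ provided by strong perfection, because if two vertices of $C$ were to lie in some clique $e$, then the monomial $g=f/x_j$ would share a variable with $\mathfrak{p}_e$ and $(\mathfrak{p}_e^{\,k+1}\colon f)$ would strictly contain $\mathfrak{p}_e^{\,k}$, spoiling the intersection identity.
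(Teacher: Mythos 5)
Your proposal is correct and follows essentially the same route as the paper: both identify $J^{(k)}=\bigcap_e \mathfrak{p}_e^{\,k}$, take $f=\prod_{x_i\in C}x_i$ for a maximal independent set $C$ meeting every maximal clique in exactly one vertex, verify $(J^{(k+1)}\colon f)=J^{(k)}$, and invoke parts (ii) and (iv) of Corollary~\ref{Caviglia-et-al-lemma-alternate-proof}. The only difference is that you spell out the computation $(\mathfrak{p}_e^{\,k+1}\colon f)=\mathfrak{p}_e^{\,k}$ in slightly more detail than the paper does.
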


\begin{proof} Let $\mathfrak{p}_1,\ldots,\mathfrak{p}_s$ be the set of
all ideals $(e)$ such that $e\in E({\rm cl}(G))$.  From the equality 
$$
J=I({\rm cl}(G)^\vee)=I({\rm cl}(G))^\vee=
\bigcap_{e\in E({\rm cl}(G))}(e)=\bigcap_{i=1}^s\mathfrak{p_i},
$$
we get $J^{(k)}=\cap_{i=1}^s\mathfrak{p}_i^k$ for $k\geq 1$. As $G$ is strongly
perfect, $G$ has a maximal independent set of vertices $C$ such that
$|C\cap e|=1$ for any $e\in{\rm cl}(G)$, that is, 
$|C\cap \mathfrak{p}_i|=1$ for $i=1,\ldots,s$. Hence, setting
$f=\prod_{x_i\in C}x_i$, one has the equalities
$$
(J^{(k+1)}\colon f)=\left(\bigcap_{i=1}^{s}\mathfrak{p_i}^{k+1}\colon
f\right)=\bigcap_{i=1}^s(\mathfrak{p}_i^{k+1}\colon f)= 
\bigcap_{i=1}^s\mathfrak{p}_i^{k}=J^{(k)} \ \mbox{ for }\ k\geq 1. 
$$
\quad Therefore, by parts (ii) and (iv) of
Corollary~\ref{Caviglia-et-al-lemma-alternate-proof}, 
one has 
$$
\depth(R/J^{(k)})=\depth(R/(J^{(k+1)}\colon
f))\geq\depth(R/J^{(k+1)}), 
$$
and $\reg(R/J^{(k)})=\reg(R/(J^{(k+1)}\colon
f))\leq\reg(R/J^{(k+1)})$.
\end{proof}
\begin{proposition}\label{additivity-cm-square}
Let $A=K[X]$ and $B=K[Y]$ be polynomial rings over a field $K$ in disjoint sets
of variables, let $I$ and $J$ be nonzero homogeneous proper ideals of $A$ and $B$
respectively, and let $R=K[X,Y]$. The following hold. 
\begin{itemize}
\item[(a)] \cite[Proposition
3.7]{Ha-sum-powers-of-sums} $R/(I + J)^i$ is 
Cohen--Macaulay for all $i\leq k$ if and only if $A/I^i$ and
$B/J^i$ are Cohen--Macaulay for all $i\leq k$.
\item[(b)] \cite[Lemma~3.2]{Hoa-mixed-product}
$\reg(R/(I+J))=\reg(A/I)+\reg(B/J)$.
\item[(c)] \cite[Lemma~3.2]{Hoa-mixed-product}
$\reg(R/IJ)=\reg(A/I)+\reg(B/J)+1$.
\end{itemize}
\end{proposition}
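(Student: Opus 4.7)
\demo My plan is to exploit the tensor-product structure $R \cong A \otimes_K B$ afforded by the disjoint-variable hypothesis; over the field $K$ every module is flat, so the decomposition is well-behaved homologically.

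For part (a), I would induct on $k$, using the expansion $(I+J)^k = \sum_{j+\ell=k} I^j J^\ell$ together with the filtration of $R/(I+J)^k$ whose successive quotients are, up to shifts, $K$-tensor products of the form $(A/I^{j+1}) \otimes_K (B/J^{k-j})$ for various $j$. Over a field, depth is additive on $K$-tensor products of finitely generated graded modules living in disjoint sets of variables, so each such tensor product is Cohen-Macaulay iff both tensor factors are. The depth lemma (\cite[Lemma~2.3.9]{monalg-rev}) applied along the filtration then lifts this biconditional to $R/(I+J)^k$, yielding both implications simultaneously for all $i\le k$.

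For part (b), I would show that a minimal $R$-free resolution of $R/(I+J)$ is the total complex of $F_\bullet \otimes_K G_\bullet$, where $F_\bullet$ is a minimal $A$-free resolution of $A/I$ and $G_\bullet$ is a minimal $B$-free resolution of $B/J$. Exactness of the total complex follows from the Künneth formula because $K$ is a field, and minimality is preserved because the differentials land in the respective maximal ideals of $A$ and $B$, hence in the maximal ideal of $R$. The resulting Betti-number formula
\[
\beta^R_{i,j}\bigl(R/(I+J)\bigr) = \sum_{p+q=i,\ s+t=j} \beta^A_{p,s}(A/I)\,\beta^B_{q,t}(B/J)
\]
immediately yields $\reg(R/(I+J)) = \reg(A/I) + \reg(B/J)$.

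For part (c), I would combine part (b) with the exact sequence
\[
0 \longrightarrow R/(I \cap J) \longrightarrow R/I \oplus R/J \longrightarrow R/(I+J) \longrightarrow 0,
\]
together with the identification $I \cap J = IJ$, valid in the disjoint-variables setting because $\operatorname{Tor}^R_1(R/I, R/J)=0$ by Künneth. Lemma~\ref{reg-lemma-special-k} applied with $k=1$, combined with $\reg(R/I \oplus R/J) = \max\{\reg(A/I),\reg(B/J)\}$, yields $\reg(R/IJ) \le \reg(A/I) + \reg(B/J) + 1$. The main obstacle I anticipate is the matching lower bound: reading the same short exact sequence in the other direction via \cite[Corollary~20.19]{Eisen} delivers the lower bound whenever $\reg(A/I), \reg(B/J) \ge 1$, so the degenerate cases where one of the regularities vanishes must be handled by direct inspection of the minimal free resolution of $R/IJ$, which I would assemble from $F_\bullet$ and $G_\bullet$ via a mapping-cone construction relating the resolutions of $R/(I+J)$, $R/I$, and $R/J$ afforded by the exact sequence above.
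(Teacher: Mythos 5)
The paper offers no proof of this proposition --- all three parts are quoted from \cite{Ha-sum-powers-of-sums} and \cite{Hoa-mixed-product} --- so I am judging your argument on its own merits. Parts (b) and (c) are essentially fine: the K\"unneth/tensor-of-minimal-resolutions argument for (b) is correct and standard, and (c) can be pushed through as you describe. (For (c) you could avoid the case split entirely by observing that $IJ\cong I\otimes_K J$ as $R$-modules, since $I\otimes_K J\hookrightarrow A\otimes_K B$ by flatness over $K$; the same tensor-resolution argument as in (b) then gives $\reg(IJ)=\reg(I)+\reg(J)$ directly, and hence $\reg(R/IJ)=\reg(A/I)+\reg(B/J)+1$ with no degenerate cases.)

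Part (a), however, has a genuine gap in the ``only if'' direction. Two issues. First, the subquotients of the standard filtration of $R/(I+J)^k$ are (up to reindexing) $I^{j}/I^{j+1}\otimes_K B/J^{k-j}$, not $(A/I^{j+1})\otimes_K(B/J^{k-j})$; this is precisely why the hypothesis must involve \emph{all} powers $i\leq k$, since controlling $\depth(I^j/I^{j+1})$ requires both $A/I^j$ and $A/I^{j+1}$. More seriously, the depth lemma applied along a filtration only yields $\depth(M)\geq\min_i\depth(Q_i)$ for the subquotients $Q_i$; it gives a \emph{lower} bound on the depth of the total module and nothing in the reverse direction. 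So your filtration argument can establish that CM-ness of all $A/I^i$ and $B/J^i$ forces CM-ness of $R/(I+J)^i$ (the subquotients become CM of the common dimension $\dim A/I+\dim B/J$), but it cannot ``lift the biconditional'': it does not show that CM-ness of $R/(I+J)^i$ for $i\leq k$ forces CM-ness of $A/I^i$ and $B/J^i$. That this direction is delicate is witnessed by the paper's own Example~\ref{depth-is-not-additive}, where $\depth(A/I^2)=\depth(B/J^2)=0$ yet $\depth(R/(I+J)^2)=1$, so no power-by-power additivity of depth holds. The actual proof in \cite{Ha-sum-powers-of-sums} rests on an exact formula for $\depth(R/(I+J)^n)$ as a minimum over all decompositions involving $\depth(A/I^i)$ and $\depth(B/J^j)$ with $i+j\leq n$, whose matching \emph{upper} bound is obtained by an explicit ${\rm Tor}$/K\"unneth computation rather than a filtration. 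You would need to supply that upper bound (or some substitute, e.g.\ a splitting of the filtration or a local cohomology argument) to close the converse implication.
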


The Cohen--Macaulay property of the square of an edge ideal can be
expressed in terms of its connected components (cf.
\cite[Lemma~4.1]{Vi2}). For additional results
on the depth of powers of sums of ideals see
\cite{Ha-sum-powers-of-sums} and the references therein.

\begin{corollary}{\cite[Corollary~4.9]{rinaldo-terai-yoshida}}\label{cm-square-components} 
Let $G$ be a graph with connected components
$G_1,\ldots,G_m$. Then $I(G)^2$ is Cohen--Macaulay if and only if
$I(G_i)^2$ is Cohen--Macaulay for $i=1,\ldots,m$.
\end{corollary}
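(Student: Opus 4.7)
The plan is to reduce by induction on $m$ to the two-component case and then invoke Proposition~\ref{additivity-cm-square}(a). Since the connected components partition $V(G)$ with no edges between them, for $m=2$ I identify $R = K[V(G)] = A \otimes_K B$, where $A = K[V(G_1)]$ and $B = K[V(G_2)]$, and write $I(G) = I(G_1)R + I(G_2)R$ as a sum of two ideals generated in disjoint sets of variables. The inductive step follows by grouping $G_2 \cup \cdots \cup G_m$ as a single subgraph and reapplying the two-component argument.

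Applying Proposition~\ref{additivity-cm-square}(a) with $k = 2$ then yields the equivalence: $R/I(G)^i$ is Cohen--Macaulay for all $i \leq 2$ if and only if $A/I(G_1)^i$ and $B/I(G_2)^i$ are Cohen--Macaulay for all $i \leq 2$. This controls the Cohen--Macaulayness of $I(G)$ jointly with that of $I(G)^2$. To strip this down to the assertion about squares alone, I invoke the fact---present in the classifications of edge ideals with Cohen--Macaulay square carried out in \cite{crupi-et-al,Hoang-etal,trung-tuan}---that Cohen--Macaulayness of $I(G)^2$ implies Cohen--Macaulayness of $I(G)$ for edge ideals of graphs. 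Applied to $G$ and to each $G_j$, this removes the superfluous $i=1$ hypotheses from the equivalence and gives the corollary.

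The main obstacle is precisely this last ingredient: Proposition~\ref{additivity-cm-square}(a) delivers only the joint statement about $I(G)$ and $I(G)^2$, so the argument relies on the auxiliary implication ``$I(G)^2$ Cohen--Macaulay $\Rightarrow$ $I(G)$ Cohen--Macaulay,'' which does not follow formally from the tools of Section~2. An alternative route that avoids this step would be to work directly with the tensor-product decomposition $R = A \otimes_K B$ and establish additivity of depth and dimension for $R/I(G)^2$ across the splitting $I(G)^2 = I(G_1)^2 + I(G_1)I(G_2) + I(G_2)^2$, for instance by exhibiting a K\"unneth-style decomposition of a minimal free resolution of $R/I(G)^2$.
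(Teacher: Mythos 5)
Your argument is correct and follows essentially the same route as the paper: reduce to Proposition~\ref{additivity-cm-square}(a) via the tensor decomposition over the components, and then dispose of the superfluous $i=1$ conditions by the implication ``$I(G)^2$ Cohen--Macaulay $\Rightarrow$ $I(G)$ Cohen--Macaulay.'' The one point worth correcting is your assessment of the ``main obstacle'': that implication does not need the classification results of \cite{crupi-et-al,Hoang-etal,trung-tuan}, because it is an immediate consequence of Corollary~\ref{herzog-takayama-terai-theo} (itself derived in Section~3 from Theorem~\ref{morey-vila-oaxaca-2017}): the radical of a Cohen--Macaulay monomial ideal is Cohen--Macaulay, and $I(G)={\rm rad}(I(G)^2)$ with $\dim(R/I(G))=\dim(R/I(G)^2)$, so $\depth(R/I(G))\geq\depth(R/I(G)^2)$ forces $R/I(G)$ to be Cohen--Macaulay whenever $R/I(G)^2$ is. This is exactly the paper's proof, so the alternative K\"unneth-style route you sketch at the end is unnecessary; the ingredient you were missing is internal to the paper, not external.
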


\begin{proof} Since the radical of a Cohen--Macaulay monomial 
ideal is Cohen--Macaulay \cite{herzog-takayama-terai} (see
Corollary~\ref{herzog-takayama-terai-theo}), the results follows from
Proposition~\ref{additivity-cm-square}.
\end{proof}

\begin{example}\label{depth-is-not-additive}
Let $A=K[x_1,x_2,x_3]$ and $B=K[y_1,y_2,y_3]$ be polynomial rings over
a field $K$, let $I=(x_1x_2,x_2x_3,x_1x_3)$ and
$J=(y_1y_2,y_2y_3,y_1y_3)$ be ideals of $A$ and $B$
respectively, and let $R=K[X,Y]$. Then $A/I^2$ and $B/I^2$ have depth $0$
but $R/(I+J)^2$ has depth $1$, that is, the depth of squares of
monomial ideals is not additive on disjoint sets of variables. 
\end{example}

\begin{lemma}\label{square-cm-lemma} 
Let $G$ be a graph without isolated vertices. The
following hold.
\begin{itemize}
\item[(a)] If $R/I(G)^2$ is Cohen--Macaulay, then  $R/(I(G\setminus
N_G(x_i))^2$ is Cohen--Macaulay for any $x_i$.
\item[(b)] $\depth(R/I(G)^2)=0$ if and only if $G$ has a triangle
$C_3$ that intersects $N_G(x_i)$ for any $x_i$ outside $C_3$. In
particular if the depth of $R/I(G)^2$ is $0$, then $G$ is connected. 
\end{itemize}
\end{lemma}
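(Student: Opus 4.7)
My plan is to pass to the colon ideal $(I(G)^2\colon x_i^2)$ and transfer the Cohen--Macaulay property through the disjoint-variables additivity of Proposition~\ref{additivity-cm-square}(a). The key step will be establishing the identity
\[
(I(G)^2\colon x_i^2)\;=\;L^2,\qquad L:=(I(G)\colon x_i)=I(G\setminus N_G(x_i))+(N_G(x_i)),
\]
by a case analysis on how the two copies of $x_i$ are absorbed by pairs of edges generating $I(G)^2$ (both copies, one, or none), noting that edges of $G\setminus x_i$ incident to $N_G(x_i)$ collapse into $(N_G(x_i))$; the factorization $L=(I(G)\colon x_i)$ is Proposition~\ref{trung-limit-powers}(b), and the decomposition of $L$ is into disjoint variable sets. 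To apply Proposition~\ref{additivity-cm-square}(a) with $k=2$ I will need both $R/L$ and $R/L^2$ Cohen--Macaulay. For this I will first invoke Corollary~\ref{herzog-takayama-terai-theo} on $I(G)^2$ (whose radical is $I(G)$) to upgrade the hypothesis to $R/I(G)$ being Cohen--Macaulay as well, and then apply Corollary~\ref{Caviglia-et-al-lemma-alternate-proof}(ii) twice to squeeze the depth/dimension inequalities for $R/L$ and $R/L^2$ into equalities. Since the $(N_G(x_i))^i$-quotients on the other side are trivially Cohen--Macaulay, Proposition~\ref{additivity-cm-square}(a) extracts the desired Cohen--Macaulayness of $R/I(G\setminus N_G(x_i))^2$.

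\textbf{Part (b), $(\Leftarrow)$.} Given a dominating triangle $\{x_a,x_b,x_c\}$, I will verify that $g:=x_ax_bx_c$ is a socle element of $R/I(G)^2$: $g\notin I(G)^2$ on degree grounds, while $x_ag=(x_ax_b)(x_ax_c)\in I(G)^2$ (and symmetrically for $x_b,x_c$); for any $x_\ell$ outside the triangle the dominating property supplies an edge $\{x_\ell,x_v\}$ with $v\in\{a,b,c\}$, making $x_\ell g=(x_\ell x_v)\cdot(\text{opposite triangle edge})\in I(G)^2$. Thus $\mathfrak m\in\operatorname{Ass}(R/I(G)^2)$ and $\depth R/I(G)^2=0$.

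\textbf{Part (b), $(\Rightarrow)$ and main obstacle.} I will pick a monomial $g\notin I(G)^2$ with $\mathfrak m g\subset I(G)^2$ and let $S:=\operatorname{supp}(g)$. I will first argue that $g$ is squarefree: if $x_j^2\mid g$, writing $x_jg=e_1e_2h'$ and analyzing the three possible $x_j$-patterns in $e_1e_2$ (power $2$, $1$, or $0$) always produces an explicit factorization of $g$ itself as a product of two edges times a monomial, contradicting $g\notin I(G)^2$. Since $\deg(x_\ell g)\ge4$ we immediately have $|S|\ge3$. The main obstacle is the upper bound $|S|\le3$: since $g$ is squarefree and not in $I(G)^2$, no pair of disjoint edges of $G$ both have endpoints in $S$, so $G[S]$ is a star or a triangle; a triangle forces $|S|=3$, while if $G[S]$ were a star with $|S|\ge4$, a leaf $x_\ell\in S$ has a unique neighbor in $S$, and both possible factorizations of $x_\ell g\in I(G)^2$ --- either via two edges of $G$ from $x_\ell$ into $S$, or via a perfect matching on a $4$-subset of $S$ --- fail, giving a contradiction. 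With $|S|=3$, say $S=\{x_a,x_b,x_c\}$, the conditions $x_\nu g\in I(G)^2$ for $\nu\in\{a,b,c\}$ admit only the factorization of type $(x_ax_b)(x_ax_c)$ and force $S$ to be a triangle; and for $x_\ell\notin S$ the degree-$4$ factorization of $x_\ell g=x_\ell x_ax_bx_c$ into two disjoint edges, together with the already-present triangle edges, forces $x_\ell$ to be adjacent to the triangle. The ``in particular'' statement then follows immediately, since every vertex is either in the dominating triangle or in its open neighborhood, so the no-isolated-vertex hypothesis places every vertex in the same connected component.
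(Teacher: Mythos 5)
Your proposal is correct, and part (b) is essentially the paper's own argument: the socle element $x_ax_bx_c$ for the backward direction, and for the forward direction the squarefreeness of the witness monomial, the reduction to $|S|=3$, the forced triangle, and the adjacency of every outside vertex. Part (a), however, takes a genuinely different route. The paper obtains
$\depth(R/(I(G\setminus N_G(x_i))^2,N_G(x_i)))\geq\depth(R/(I(G)^2\colon x_i^2))\geq\depth(R/I(G)^2)$
by combining Proposition~\ref{trung-limit-powers}(a) --- which rests on the degree-lowering Theorem~\ref{morey-vila-oaxaca-2017} applied successively at the variables of $N_G(x_i)$ --- with Corollary~\ref{Caviglia-et-al-lemma-alternate-proof}(ii), and then reads off the Cohen--Macaulayness of $R/I(G\setminus N_G(x_i))^2$ because killing the linear forms $N_G(x_i)$ is harmless. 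You instead prove the exact colon identity $(I(G)^2\colon x_i^2)=(I(G)\colon x_i)^2$, which your case analysis on the $x_i$-degree of $e_1e_2$ does establish for an \emph{arbitrary} graph (the paper records $(I(G)^k\colon x_i^k)=(I(G)\colon x_i)^k$ only for bipartite $G$, via normal torsion-freeness), and then split off $I(G\setminus N_G(x_i))^2$ with the disjoint-variable additivity of Proposition~\ref{additivity-cm-square}(a). This costs you two inputs the paper's proof does not need: Cohen--Macaulayness of $R/I(G)$ itself (via Corollary~\ref{herzog-takayama-terai-theo}), since the additivity statement with $k=2$ requires both $R/L$ and $R/L^2$ to be Cohen--Macaulay, and a separate word for the degenerate case $I(G\setminus N_G(x_i))=0$, where Proposition~\ref{additivity-cm-square} (stated for nonzero ideals) does not literally apply although the conclusion is then trivial; what you gain is the colon identity, which is of independent interest. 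Two small points to tighten: in the dimension count for $R/L$ and $R/L^2$ you should say explicitly that $\dim(R/L^2)\leq\dim(R/I(G)^2)$ because $L^2\supseteq I(G)^2$; and in (b), ``no two disjoint edges inside $S$'' only constrains the \emph{edge set} of $G[S]$ to be a star or a triangle, so you must also exclude vertices of $S$ that are isolated in $G[S]$ --- your dichotomy does handle them (for such a vertex both factorization types of $x_\ell g$ fail), but it deserves an explicit sentence.
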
 

\begin{proof} (a): Using Proposition~\ref{trung-limit-powers}(a) and
Corollary~\ref{Caviglia-et-al-lemma-alternate-proof}(ii), we get
$$\depth(R/(I(G\setminus
N_G(x_i))^2,N_G(x_i)))\geq\depth(R/(I(G)^2\colon
x_i^2))\geq\depth(R/I(G)^2)$$
for all $i$. Thus $R/(I(G\setminus N_G(x_i))^2$ is Cohen--Macaulay for
all $i$. 

(b) ($\Rightarrow$): As $\mathfrak{m}=(x_1,\ldots,x_n)$ is an
associated prime of $I(G)^2$, there is $x^a=x_1^{a_1}\cdots x_n^{a_n}$
such that $(I(G)^2\colon x^a)=\mathfrak{m}$. Thus $x_ix^a\in I(G)^2$
for all $i$ and $x^a\notin I(G)^2$. Note that $x^a$ is squarefree.
Indeed if $a_k\geq 2$ for some $k$, then $x_kx^a=x^bf_if_j$ for some
monomial $x^b$ and some minimal generators $f_i,f_j$ of $I(G)$, which
is impossible because $f_i,f_j$ are squarefree monomials of 
degree $2$ and $x^a\notin I(G)^2$. Thus we may assume that
$x^a=x_1\cdots x_r$, for some $r\geq 3$, and $x_1x_2\in I(G)$. Then 
$x_3x^a=x^bf_if_j$ for some $x^b$ and some minimal generators
$f_i,f_j$ of $I(G)$. One can write $f_i=x_3x_k$ and
$f_j=x_3x_\ell$, $k\neq \ell$, $k\neq 3$, $\ell\neq 3$. Clearly either
$x_k=x_1$ or $x_k=x_2$ and either $x_\ell=x_1$ or $x_\ell=x_2$ because
$x^a$ is not in $I(G)^2$. Thus $x_1,x_2,x_3$ are the vertices of a
triangle of $G$ that we denote by $C_3$. Since $x_rx^a\in I(G)^2$, 
it follows that $r=3$. Take any vertex $x_k$ not in $C_3$. As
$x_kx^a=x_k(x_1x_2x_3)$ and $x_kx^a$ is in $I(G)^2$, we get that $x_k$
is adjacent to some vertex of $C_3$.

(b) ($\Leftarrow$): Pick a triangle $C_3$ of $G$ such that any vertex
outside $C_3$ is adjacent to a vertex of $C_3$. Setting 
$x^a=\prod_{x_i\in V(C_3)}x_i$, we get that $(I(G) ^2\colon x^a)$ is
the maximal ideal $\mathfrak{m}=(x_1,\ldots,x_n)$. Thus $\mathfrak{m}$
 is an associated prime of $I(G)^2$, that is, $\depth(R/I(G)^2)=0$.
 This part could also follow from a general construction of \cite{AJ}.
\end{proof}

In \cite{Hoang-etal,hoang-gorenstein-second-jaco} the Cohen--Macaulay
property of the square of the edge ideal of a graph is classified. 

\begin{theorem}{\cite[Theorem~4.4]{hoang-gorenstein-second-jaco}}
\label{cm-second-power} 
Let $G$ be a graph with vertex set $V(G)=\{x_1,\ldots,x_n\}$
and without isolated vertices. Then $I(G)^2$ 
is Cohen--Macaulay if and only if $G$ is a triangle-free unmixed graph
and $G\setminus\{x_i\}$ is unmixed for all $i$. 
\end{theorem}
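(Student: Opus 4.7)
The plan is to prove each direction separately, with the principal tools being Lemma~\ref{square-cm-lemma} and Corollary~\ref{herzog-takayama-terai-theo}. By Corollary~\ref{cm-square-components} we may reduce to the case when $G$ is connected.

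For the necessity $(\Rightarrow)$, suppose $R/I(G)^2$ is Cohen--Macaulay. Since $I(G)={\rm rad}(I(G)^2)$, Corollary~\ref{herzog-takayama-terai-theo} gives that $I(G)$ is Cohen--Macaulay and hence $G$ is unmixed. To prove triangle-freeness, assume toward a contradiction that $G$ contains a triangle $T$. If every vertex outside $T$ is adjacent to a vertex of $T$, then Lemma~\ref{square-cm-lemma}(b) yields $\depth(R/I(G)^2)=0$, contradicting $\dim(R/I(G)^2)=\alpha(G)\geq 1$. Otherwise, pick $x_k\notin T$ having no neighbor in $T$; then $T$ survives in $G':=G\setminus N_G(x_k)$, and Lemma~\ref{square-cm-lemma}(a) shows $R/I(G')^2$ is still Cohen--Macaulay. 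Iterating on $G'$ (after discarding isolated vertices, which does not alter Cohen--Macaulayness of the square since adjoining polynomial variables preserves the property) eventually forces the first case, a contradiction. To establish unmixedness of $G\setminus\{x_i\}$ for every $i$, one uses the identification $(I(G)^2,x_i)=(I(G\setminus\{x_i\})^2,x_i)$ together with the fact that Cohen--Macaulayness of $R/I(G)^2$ forces its quotient modulo $x_i$ to be equidimensional; modding out $x_i$ and applying Corollary~\ref{herzog-takayama-terai-theo} once more yields that $I(G\setminus\{x_i\})$ is unmixed.

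For the sufficiency $(\Leftarrow)$, assume $G$ is triangle-free, unmixed, and $G\setminus\{x_i\}$ is unmixed for all $i$. We prove $\depth(R/I(G)^2)=\alpha(G)$ by induction on $n=|V(G)|$. Choose a vertex $x_i$ of positive degree and use the short exact sequence
\[
0\longrightarrow R/(I(G)^2\colon x_i)[-1]\stackrel{x_i}{\longrightarrow} R/I(G)^2\longrightarrow R/(I(G)^2,x_i)\longrightarrow 0.
\]
Via the identification $(I(G)^2,x_i)=(I(G\setminus\{x_i\})^2,x_i)$, the induction hypothesis (applied to $G\setminus\{x_i\}$, which inherits the three hypotheses by a combinatorial lemma specific to triangle-free unmixed graphs) computes the depth of the right-hand term. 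For the colon ideal $(I(G)^2\colon x_i)$, Proposition~\ref{trung-limit-powers}(a) provides an upper bound via $R/(I(G\setminus N_G(x_i))^2,N_G(x_i))$, and triangle-freeness together with Lemma~\ref{square-cm-lemma}(b) rules out the depth-$0$ obstruction. Assembling these estimates through the depth lemma produces the desired equality $\depth(R/I(G)^2)=\alpha(G)=\dim(R/I(G)^2)$.

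The principal obstacle lies in the sufficiency direction, where two points demand particular care. First, the colon ideal $(I(G)^2\colon x_i)$ is not squarefree---it contains $x_j^2$ for each $x_j\in N_G(x_i)$ together with several mixed generators---so bounding its depth requires combining polarization (Proposition~\ref{pepe-morey-vila-vivares}) with the combinatorial structure provided by triangle-freeness, and it is precisely this condition that prevents the collapse predicted by Lemma~\ref{square-cm-lemma}(b). Second, the inductive step needs propagation of the vertex-deletion hypothesis: one must verify that $(G\setminus\{x_i\})\setminus\{x_j\}$ remains unmixed for every $i,j$, a combinatorial statement asserting that the relevant class of graphs is closed under vertex deletion. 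Managing the isolated vertices created by these deletions is a secondary but persistent bookkeeping concern, handled cleanly by the fact that adjoining free variables preserves Cohen--Macaulayness.
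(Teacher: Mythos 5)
This statement is quoted in the paper from Hoang--Trung \cite[Theorem~4.4]{hoang-gorenstein-second-jaco}; the paper gives no proof of it, so there is nothing internal to compare against. Judged on its own, your proposal has genuine gaps in both directions. The parts that do work (radical argument for unmixedness of $G$ itself, and the triangle-freeness argument via Lemma~\ref{square-cm-lemma}) essentially reproduce Corollaries~\ref{herzog-takayama-terai-theo} and \ref{square-cm-1}, which the paper does prove. The rest does not go through as written.

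In the necessity direction, your derivation of unmixedness of $G\setminus\{x_i\}$ rests on the claim that Cohen--Macaulayness of $R/I(G)^2$ forces $R/(I(G)^2,x_i)$ to be equidimensional. That is false in general: $x_i$ is a zero-divisor on $R/I(G)^2$ whenever $x_i$ lies in a minimal vertex cover, and quotients of Cohen--Macaulay rings by zero-divisors need not be equidimensional. Concretely, for the path $P_4$ on $x_1,x_2,x_3,x_4$ the ring $R/I(P_4)$ is Cohen--Macaulay, yet $(I(P_4),x_1)=(I(P_3),x_1)$ is not equidimensional because $P_3$ is mixed; so the ``fact'' you invoke already fails for first powers, and you supply no argument for why the square behaves differently. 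In the sufficiency direction, the induction cannot be closed with the tools you cite: the depth lemma applied to your exact sequence requires a \emph{lower} bound on $\depth(R/(I(G)^2\colon x_i))$ of size $\alpha(G)$, whereas Proposition~\ref{trung-limit-powers}(a) only gives an \emph{upper} bound, and excluding depth $0$ via Lemma~\ref{square-cm-lemma}(b) yields only $\depth\geq 1$. Moreover, the assertion that the hypothesis class (triangle-free, unmixed, all vertex-deletions unmixed) is closed under vertex deletion is exactly the kind of nontrivial combinatorial input that the original proof of Hoang and Trung obtains from the structure theory of triangle-free Gorenstein graphs; stating it as ``a combinatorial lemma'' leaves the hardest part of the theorem unproved.
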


As an application we recover the following facts. 

\begin{corollary}{\rm(\cite[Theorem~2.7]{crupi-et-al}, 
\cite[Proposition~4.2]{Hoang-etal})}\label{square-cm} 
Let $G$ be a bipartite graph without isolated vertices. Then 
$I(G)^2$ is Cohen--Macaulay if and only if $I(G)$ is a complete
intersection, i.e., $G$ is a disjoint union of edges. 
\end{corollary}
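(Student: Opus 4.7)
The plan is to handle the two implications separately, using a component reduction and the structural characterization from Theorem~\ref{cm-second-power}.

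For the easy direction ($\Leftarrow$), if $G$ is a disjoint union of edges, then $I(G)$ is a monomial complete intersection supported on disjoint variable sets. I would apply Proposition~\ref{additivity-cm-square}(a) inductively across the connected components, using that for a single edge $e=\{x,y\}$, both $K[x,y]/I(e)$ and $K[x,y]/I(e)^{2}=K[x,y]/(x^{2}y^{2})$ are Cohen--Macaulay (they are $1$-dimensional quotients of a polynomial ring by a principal ideal).

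For the hard direction ($\Rightarrow$), I would first reduce to the connected case using Corollary~\ref{cm-square-components}; so suppose $G$ is connected bipartite without isolated vertices with $I(G)^{2}$ Cohen--Macaulay, and I aim to show $G$ is a single edge. By Theorem~\ref{cm-second-power}, $G$ is unmixed (triangle-freeness is automatic as $G$ is bipartite) and $G\setminus\{x\}$ is unmixed for every vertex $x$. The standard characterization of unmixed bipartite graphs without isolated vertices (K\"onig's theorem together with results in \cite{monalg-rev}) provides a perfect matching; I write the bipartition as $X=\{x_{1},\dots,x_{g}\}$ and $Y=\{y_{1},\dots,y_{g}\}$ with matching edges $\{x_{i},y_{i}\}$.

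The heart of the proof is a combinatorial contradiction when $g\geq 2$. Since $G$ is connected with $g\geq 2$, not all edges of $G$ can be matching edges, so after relabeling I may assume $\{x_{1},y_{2}\}\in E(G)$. I then analyze $G':=G\setminus\{x_{2}\}$ by exhibiting two minimal vertex covers of different sizes: the set $C_{1}=\{x_{1},x_{3},\dots,x_{g}\}$ is a minimal vertex cover of $G'$ of size $g-1$ (it covers everything since $G'$ is bipartite with all of $X\setminus\{x_{2}\}$ included in $C_{1}$, and minimality follows from the matching edges $\{x_{i},y_{i}\}$); the set $C_{2}=\{y_{1},y_{2},\dots,y_{g}\}$ is a minimal vertex cover of $G'$ of size $g$, where $y_{2}$ is forced by the edge $\{x_{1},y_{2}\}$ which survives in $G'$ since $x_{2}\notin\{x_{1},y_{2}\}$. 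Since $g-1\neq g$, $G'$ is not unmixed, contradicting Theorem~\ref{cm-second-power}. Hence $g=1$, so $G$ is a single edge.

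The main obstacle, and essentially the only nontrivial step, is the combinatorial argument showing non-unmixedness of $G\setminus\{x_{2}\}$. The key insight is that connectedness forces a non-matching edge $\{x_{1},y_{2}\}$, and deleting the endpoint $x_{2}$ of its matching partner creates an asymmetry: the minimal cover $C_{1}$ drawn from $X\setminus\{x_{2}\}$ shrinks by one, whereas the minimal cover $C_{2}$ drawn from $Y$ retains full size $g$ because $y_{2}$ is still needed to cover the surviving edge $\{x_{1},y_{2}\}$.
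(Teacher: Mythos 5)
Your proof is correct, but the forward direction takes a genuinely different route from the paper's. The paper does not use the Hoang--Trung classification (Theorem~\ref{cm-second-power}) at all: it first deduces that $I(G)$ itself is Cohen--Macaulay from Corollary~\ref{herzog-takayama-terai-theo}, invokes the Herzog--Hibi structure theorem for Cohen--Macaulay bipartite graphs to obtain an ordered perfect matching, and then runs an induction on $g$, applying Lemma~\ref{square-cm-lemma}(a) at the vertex $x_1=N_G(y_1)$ to pass to $G\setminus\{x_1\}$; the final step (``the square of $I(G)$ is not Cohen--Macaulay if $g\geq 2$'') is only asserted there, with the details left to a ``separate induction procedure.'' You instead reduce to the connected case via Corollary~\ref{cm-square-components}, quote Theorem~\ref{cm-second-power} to get that $G$ and every $G\setminus\{x_i\}$ are unmixed, and derive the contradiction for $g\geq 2$ by exhibiting minimal vertex covers of sizes $g-1$ and $g$ in $G\setminus\{x_2\}$; this combinatorial step is sound, since each $x_i\in C_1$ is forced by the surviving matching edge $\{x_i,y_i\}$ and $y_2\in C_2$ is forced by the non-matching edge $\{x_1,y_2\}$, which survives the deletion of $x_2$. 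What your route buys is a fully explicit contradiction exactly where the paper is sketchiest; what it costs is reliance on the full strength of the Hoang--Trung theorem, which the paper's argument deliberately avoids in favor of its own Lemma~\ref{square-cm-lemma} and the classical Herzog--Hibi description. Your backward direction via Proposition~\ref{additivity-cm-square}(a) and the single-edge computation is also valid, though the paper settles it by citing that all powers of a Cohen--Macaulay complete intersection are Cohen--Macaulay.
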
 

\begin{proof} $\Rightarrow$): Since $I(G)$ is the radical of $I(G)^2$,
by Corollary~\ref{herzog-takayama-terai-theo}, the ideal $I(G)$ is 
Cohen--Macaulay. Hence, according to a structure theorem 
for Cohen--Macaulay bipartite graphs \cite[Theorem~3.4]{herzog-hibi-crit},
there is a bipartition $V_1=\{x_1,\ldots,x_g\}$,
$V_2=\{y_1,\ldots,y_g\}$ of $G$ such that: 

\noindent {\rm (i)} $\{x_i,y_i\}\in E(G)$ for all $i$, 

\noindent{\rm (ii)} if $\{x_i,y_j\}\in E(G)$, then $i\leq j$, and 

\noindent {\rm (iii)} if $\{x_i,y_j\}$, $\{x_j,y_k\}$ are in $E(G)$ and $i<j<k$, 
then $\{x_i,y_k\}\in E(G)$.

We proceed by induction on $g$. If $g=1$, $I(G)$ is clearly a complete
intersection. Using the connected components of $G$ together with 
Corollaries~\ref{herzog-takayama-terai-theo} and \ref{cm-square-components}, 
and Proposition~\ref{additivity-cm-square}, we may assume that $I(G)^2$ is
Cohen--Macaulay and that $G$ is a Cohen--Macaulay connected bipartite
graph. Consider the graph $H=G\setminus N_G(y_1)$. We set 
$R=K[V_1\cup V_2]$. Note that $N_G(y_1)=\{x_1\}$. Hence, by
Lemma~\ref{square-cm-lemma}(a), $I(G\setminus\{x_1\})^2$ is
Cohen--Macaulay and so is $I(G\setminus\{x_1\})$. Therefore by
induction $I(G\setminus\{x_1\})$ is generated by 
$x_2y_2,\ldots,x_gy_g$. As $G$ is connected, using
(i)--(iii), it is seen that the edges of $G$ are the edges of the
perfect matching and all edges of the form $\{x_1,y_i\}$, $i\geq 1$.
It is not hard to see (by a separate induction procedure) that the 
square of $I(G)$ is not Cohen--Macaulay if $g\geq 2$. Thus $g=1$. 

$\Leftarrow)$: If $I(G)$ is a complete intersection, it is well 
known that all powers of $I(G)$ are Cohen--Macaulay \cite[17.4, p.~139]{Mats}.
\end{proof}

Let $G$ be a graph. The next corollary follows from the result that
``$I(G)^2 = I(G)^{(2)}$ if
and only if $G$ has no triangles''. This result originated in
\cite{ITG} implicitly, written explicitly
in \cite[Lemma~3.1]{rinaldo-terai-yoshida-1}. 
Fix an integer $t\geq 2$. This lemma shows that 
 $I(G)^t = I(G)^{(t)}$ if and only if $G$ contains no odd cycles of length $2s-1$ 
for any $2\leq s\leq t$ (cf. \cite[Theorem~4.13]{Dao-stefani-grifo-huneke-betancourt}). 

\begin{corollary}\label{square-cm-1} 
Let $G$ be a graph without isolated vertices. If $I(G)^2$ is
Cohen--Macaulay, then $G$ has no triangles. 
\end{corollary}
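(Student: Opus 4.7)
The plan is to reduce the corollary to the equality $I(G)^2=I(G)^{(2)}$ and then to invoke the characterization cited in the paragraph preceding the statement, namely the Rinaldo--Terai--Yoshida lemma asserting that $I(G)^2=I(G)^{(2)}$ is equivalent to $G$ being triangle-free. The Cohen--Macaulay hypothesis will enter only to rule out embedded associated primes of $I(G)^2$.

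First I would observe that if $R/I(G)^2$ is Cohen--Macaulay, then $I(G)^2$ is unmixed, and in particular has no embedded associated primes. Since $I(G)$ is squarefree, $\sqrt{I(G)^2}=I(G)$, so the minimal primes of $I(G)^2$ are precisely the monomial primes $\mathfrak{p}_1,\ldots,\mathfrak{p}_s$ generated by the minimal vertex covers of $G$ (see \cite[Theorem~6.3.39]{monalg-rev}).

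Next I would pin down the primary decomposition of $I(G)^2$. Each $\mathfrak{p}_i$ is generated by a subset of the variables, so $\mathfrak{p}_i^2$ is $\mathfrak{p}_i$-primary, and the localization at a minimal prime satisfies $I(G)R_{\mathfrak{p}_i}=\mathfrak{p}_iR_{\mathfrak{p}_i}$ and hence $I(G)^2R_{\mathfrak{p}_i}=\mathfrak{p}_i^2R_{\mathfrak{p}_i}$. Combined with the absence of embedded components, uniqueness of primary decomposition then forces
\[
I(G)^2=\bigcap_{i=1}^{s}\mathfrak{p}_i^2=I(G)^{(2)},
\]
where the last equality is just Definition~\ref{symbolic-power-def}. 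Applying the lemma cited before the statement (\cite[Lemma~3.1]{rinaldo-terai-yoshida-1}, with origin in \cite{ITG}) yields that $G$ has no triangles, as required.

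The main, and genuinely minor, hurdle is the identification of the $\mathfrak{p}_i$-primary component of $I(G)^2$ with $\mathfrak{p}_i^2$; once this short localization check is in hand, the rest is bookkeeping. No further ingredients beyond the quoted lemma and the implication \emph{Cohen--Macaulay $\Rightarrow$ unmixed $\Rightarrow$ no embedded primes} are needed, so the whole argument is just a short chain of implications rather than a new technical step.
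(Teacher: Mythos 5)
Your argument is correct, but it is not the route the paper actually takes. You prove the corollary by the ``symbolic power'' reduction: Cohen--Macaulayness of $R/I(G)^2$ forces $I(G)^2$ to have no embedded primes, the localization at each minimal prime $\mathfrak{p}_i$ identifies the $\mathfrak{p}_i$-primary component with $\mathfrak{p}_i^2$ (legitimate, since $\mathfrak{p}_i$ is generated by variables, hence $\mathfrak{p}_i^2$ is primary and contracted from $R_{\mathfrak{p}_i}$), so $I(G)^2=I(G)^{(2)}$, and the cited equivalence with triangle-freeness finishes the job. This is exactly the proof the paper alludes to in the sentence preceding the corollary (``The next corollary follows from the result that \dots'') but does not carry out; in fact only the easy direction of the cited lemma is needed, since a triangle $\{x_1,x_2,x_3\}$ gives $x_1x_2x_3\in I(G)^{(2)}\setminus I(G)^2$ for degree reasons, so your argument could be made entirely self-contained. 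The paper instead gives an inductive, combinatorial proof: assuming a triangle $C_3$ exists, it uses Lemma~\ref{square-cm-lemma}(a) (deletion of neighborhoods preserves Cohen--Macaulayness of the square) to force every vertex outside $C_3$ to be adjacent to $C_3$, and then Lemma~\ref{square-cm-lemma}(b) to conclude $\depth(R/I(G)^2)=0$, a contradiction. Your approach is shorter and rests on standard primary decomposition; the paper's approach buys a proof internal to its own machinery (the colon-ideal and depth inequalities of Proposition~\ref{trung-limit-powers} and Corollary~\ref{Caviglia-et-al-lemma-alternate-proof}) and an explicit combinatorial characterization of when $\depth(R/I(G)^2)=0$, which is of independent interest. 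No gap in your argument.
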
 

\begin{proof} 
Let $V(G)=\{x_1,\ldots,x_n\}$ be the vertex set of $G$ and let
$R$ be the polynomial ring $K[V(G)]$. We proceed by induction on $n$.
The result is clear for $n=1,2,3$. Assume $n\geq 4$. We proceed by
contradiction assuming that $G$ has a triangle $C_3$. 
Using the connected components of $G$ together with 
Corollaries~\ref{herzog-takayama-terai-theo} and \ref{cm-square-components}, 
and Proposition~\ref{additivity-cm-square}, we may assume that $I(G)^2$ is
Cohen--Macaulay and $G$ is connected. Thus, by
Lemma~\ref{square-cm-lemma}(a), $I(G\setminus N_G(x_i))^2$ is Cohen--Macaulay for
all $i$. If $G$ has a vertex $x_i$ not in $C_3$
such that $N_G(x_i)$ do not intersect the vertex set $V(C_3)$ of $C_3$, then
$C_3$ is a triangle of $G\setminus N_G(x_i)$, a contradiction. Thus 
any vertex outside $C_3$ is adjacent to a vertex of $C_3$. Hence, by
Lemma~\ref{square-cm-lemma}(b), we get $\depth(R/I(G)^2)=0$, a
contradiction.  This part could also follow from a general
construction of \cite{AJ}. 
\end{proof}

\begin{example}{\cite{hoang-gorenstein-second-jaco,rinaldo-terai-yoshida}}\label{Hoang-example}
The square of the edge ideal of the graph $G$ of
Fig.~\ref{graph1} is Cohen--Macaulay and $I(G)$ is Gorenstein. This
can be verified using {\em Macaulay\/}$2$ \cite{mac2}. This example 
appears as a special case of \cite[Conjecture~5.7]{rinaldo-terai-yoshida}. A result of Hoang
and Trung \cite[Theorem~4.4]{hoang-gorenstein-second-jaco} 
shows that for a graph $G$ without isolated vertices $I(G)^2$ is
Cohen--Macaulay if and only if $G$ is triangle-free and 
Gorenstein. The Cohen-Macaulay property of $I(G)^2$ 
is also studied in \cite{trung-tuan} in terms
of simplicial complexes.   
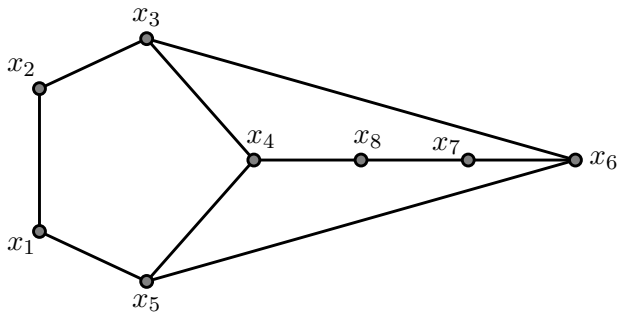
\begin{figure}[h]
\begin{center}
	\begin{tikzpicture}[line width=1.1pt,scale=0.95]
	\tikzstyle{every node}=[inner sep=0pt, minimum width=4.5pt]
	\draw [-] (-1.5,-1)--(-1.5,1); 
	\draw [-] (-1.5,1)--(0,1.7); 
	\draw [-] (-1.5,-1) --(0,-1.7); 
	\draw [-] (0,-1.7)--(1.5, 0); 
	\draw [-] (0,1.7)--(1.5,0); 
	\draw [-] (1.5,0)--(3,0); 
	\draw [-] (3,0)--(4.5,0); 
	\draw [-] (4.5,0)--(6,0); 
	\draw [-] (0,-1.7)--(6,0); 
	\draw [-] (0, 1.7)--(6,0); 
	\draw (-1.5,-1.0) node (v1) [draw, circle, fill=gray] {};
	\draw (0,-1.7) node (v5) [draw, circle, fill=gray] {};
	\draw (1.5,0) node (v4) [draw, circle, fill=gray] {};
	\draw (0,1.7) node (v3)[draw, circle, fill=gray] {};
	\draw (-1.5,1.0) node (v2) [draw, circle, fill=gray] {};
	\draw (6, 0) node (v6) [draw, circle, fill=gray] {};
	\draw (4.5, 0) node (v7) [draw, circle, fill=gray] {};
	\draw (3, 0) node (v8) [draw, circle, fill=gray] {};
	\node at (-1.75,-1.2) {$x_{1}$};
	\node at (0,-2) {$x_{5}$};
	\node at (1.6,.29) {$x_{4}$};
	\node at (0,2) {$x_{3}$};
	\node at (-1.75,1.3) {$x_{2}$};
	\node at (3.1,.29) {$x_{8}$};
    \node at (4.2,.20) {$x_{7}$};
    \node at (6.4,0) {$x_{6}$};
	\end{tikzpicture}
\caption{Gorenstein Graph $G$.}\label{graph1}
\end{center}
\end{figure}
\end{example}


\bibliographystyle{plain}

\end{document}